\newcommand{\N}{\mathbb N}
\newcommand{\Z}{\mathbb Z}
\newcommand{\R}{\mathbb R}
\newtheorem{lemma}{Lemma}
\newtheorem{proposition}{Proposition}
\newtheorem{theorem}{Theorem}
\newtheorem*{theorem*}{Theorem}
\newtheorem{remark}{Remark}
\newtheorem{definition}{Definition}
\newtheorem{corollary}{Corollary}
\numberwithin{equation}{section}
\def\bu{{\overline u}}
\def \bv{{\overline v}}
\def \bphi{{ \overline \phi}}
\def \be{{\overline e}}
\def \tk{{\tilde k}}
\def \D{{\mathcal D}}
\def \bD{{\overline{\D}}}
\def\cD{\mathcal{D}}
\DeclareMathOperator{\dive}{div}
\def\cC{\mathcal{C}}
\def \cE{{\mathcal E}}
\definecolor{fgreen}{RGB}{34,139,34}
\title{Boundary stabilization of one-dimensional cross-diffusion systems in a moving domain: linearized system}
\author[1,2,*]{Jean Cauvin-Vila}
\author[1,2]{Virginie Ehrlacher}
\author[1]{Amaury Hayat}
\affil[1]{\footnotesize{CERMICS, Ecole des Ponts ParisTech, 6 \& 8 avenue Blaise Pascal, 77455~Marne-la-Vall\'ee, France}}
\affil[2]{\footnotesize{MATHERIALS Team-project, Inria Paris, 2 Rue Simone Iff, 75012 Paris}}
\affil[*]{\emph{Corresponding author}: jean.cauvin-vila@enpc.fr}
\date{\empty}
\providecommand{\keywords}[1]
{
  \small	
  \textbf{\textit{Keywords---}} #1
}
\begin{document}

\maketitle

\begin{abstract}
    We study the boundary stabilization of one-dimensional cross-diffusion systems in a moving domain. We show first exponential stabilization and then finite-time stabilization in arbitrary small-time of the linearized system around uniform equilibria, provided the system has an entropic structure with a symmetric mobility matrix. One example of such systems are the equations describing a Physical Vapor Deposition (PVD) process. This stabilization is achieved with respect to both the volume fractions and the thickness of the domain. The feedback control is derived using the backstepping technique, adapted to the context of a time-dependent domain. In particular, the norm of the backward backstepping transform is carefully estimated with respect to time.
\end{abstract}

\keywords{Cross-diffusion systems, Parabolic PDEs, Feedback stabilization, Boundary control, Exponential stability, Backstepping}
\medskip

\section{Introduction}\label{sec:intro}

Cross-diffusion systems naturally arise in diffusion models of multi-species mixtures in a wide variety of applications: tumor growth, population dynamics, materials science etc., see for example Chapter 4 of~\cite{jungel2016a} for an introduction to these systems. Let $n\geq 1$ so that the number of species in the system of interest is $n+1$,
$d\in \mathbb{N}^*$ the spatial dimension and $\Omega \subset \mathbb{R}^d$ the bounded spatial domain occupied by the mixture. Such a cross-diffusion system then models the evolution of $u_i(t,x)$ for all $0\leq i \leq n$, where $u_i(t,x)$ denotes the local concentration or volume fraction of the $i^{th}$ species in the mixture at a time $t>0$ and point $x\in \Omega$. Setting $\widetilde{u}:=(u_0,\cdots,u_n)^T$, a typical cross-diffusion system reads as follows (together with appropriate initial and boundary conditions):
\begin{equation}\label{eq:crossdiff}
    \partial_t \widetilde{u} - \dive_x \left( \widetilde{A}\left(\widetilde{u}\right) \nabla_x \widetilde{u} \right) = 0 \quad \mbox{ for }(t,x)\in\mathbb{R}_+^*\times \Omega ,
\end{equation}
for some matrix-valued application $\widetilde{A}: \mathbb{R}^{n+1} \to \mathbb{R}^{(n+1)\times (n+1)}$. Significant advances in the understanding of the mathematical structure of these systems have been achieved in the last ten years. Indeed, it has been understood in the seminal works~\cite{burger2010,jungel2012,jungel2013,jungel2015} that many of these systems have an \emph{entropic structure}, which enables to obtain appropriate estimates in order to prove the existence of weak solutions to systems of the form \eqref{eq:crossdiff}.

These systems arise in particular in materials science, in order to model atomic diffusion within solids. Indeed, hydrodynamic limits of some stochastic lattice hopping models~\cite{quastel1992} read as cross-diffusion systems of the form \eqref{eq:crossdiff}. Our work here is mainly based on the study initiated in~\cite{bakhta2018}, where the authors considered a one-dimensional cross-diffusion system defined in a moving boundary domain in order to model a Physical Vapor Deposition process (PVD) used for the fabrication of thin film layers in the photovoltaic industry. The process can be described as follows: a wafer is introduced in a hot chamber where chemical elements are injected under gaseous form. As the latter deposit on the substrate, a heterogeneous solid layer grows upon it. Because of the high temperature conditions, diffusion occurs in the bulk until the wafer is taken out and the system is frozen.

In this model, the solid layer is composed of $n+1$ different chemical species and occupies a domain of the form $(0,e(t)) \subset \R_+$, where $e(t) >0$ denotes the thickness of the film. For all $0\leq i \leq n$, let $\phi_i \in L^1_{\rm loc}(\mathbb{R}_+^*)$ be a non-negative function so that $\phi_i(t)$ represents the flux of atoms of species $i$ absorbed at the surface of the film layer at time $t$. The evolution of the thickness of the film is determined by the fluxes $(\phi_i)_{0\leq i\leq n}$ and reads as:

\begin{equation}
    \label{domain}
    e(t) = e_0 + \int_0^t \sum_{i=0}^n \phi_i(s)ds,
\end{equation}
where $e_0>0$ denotes the initial thickness of the film. The local volume fractions of the different species $u_0(t,x),\dots,u_n(t,x)$ are naturally expected to satisfy the following constraints:
\begin{equation}
    \label{constraints}
    \forall~ 0\leq i \leq n, ~ u_i(t,x)\geq0 ~ \text{and} ~ \sum_{j=0}^n u_j(t,x) = 1.
\end{equation}
These constraints \eqref{constraints} allows one to equivalently express $u_0$ as $1-\sum_{i=1}^n u_i$. As a consequence, the whole system can be equivalently rewritten using the unknown vector $u:=(u_1,\dots,u_n)^T$. More precisely, denoting by $\varphi$ the vector-valued function $(\phi_1,\dots,\phi_n)^T$, the cross-diffusion system in the solid layer reads:
\begin{equation}
    \label{cross-diff}
    \left\{
    \begin{aligned}
        \partial_t u - \partial_x (A(u)\partial_x u) & = 0,          & \text{for} ~ (t,x) \in \bigcup_{t\in \mathbb{R}_+^*} \{t\} \times (0,e(t)) ~ =: U_{e}, \\
        (A(u)\partial_x u)(t,0)                      & = 0,          & \text{for} ~ t \in \R_+^*,                                                  \\
        (A(u)\partial_x u)(t,e(t)) + e'(t)u(t,e(t))  & = \varphi(t), & \text{for} ~ t \in \R_+^*,                                                  \\
        u(0,x)                                       & = u^0(x),     & x \in (0,e_0),
    \end{aligned}
    \right.
\end{equation}
for some matrix-valued application $A: \mathbb{R}^n \to \mathbb{R}^{n\times n}$ which is called the \emph{diffusion matrix} of the system, and some initial condition $u^0\in L^\infty((0,e_0), \mathcal D)$ where the set of constraints $\mathcal D$ is defined below in \eqref{eq:defD}. The boundary conditions express that the system is isolated at $x=0$ but that there is an incoming (vector-valued) flux $\varphi(t)$ at $x=e(t)$ where the extra term $e'(t)u(t,e(t))$ accounts for the growth of the layer.

\medskip

The main motivation of the authors of~\cite{bakhta2018} for the study of such a system was ultimately to be able to control the gaseous fluxes $(\phi_0,\dots,\phi_n)$ injected during the PVD process in order to reach target composition profiles. The global existence of weak solutions to system \eqref{cross-diff} was shown by adapting the \emph{boundedness-by-entropy method}~\cite{jungel2015}. The authors also proved existence of solutions to an optimization problem related to the control of the fluxes and long-time asymptotics of the volume fraction profiles in the case of constant external fluxes (i.e. when the system is in open-loop). However, it is not clear whether the corresponding asymptotic profiles are exponentially stable in open-loop, and whether one could use a better flux control $\varphi$ to improve their stability remains an open question. The main difficulty lies in the expansion of the domain with time and the coupling between $u(t)$ and $e(t)$. When the domain is fixed, results concerning the exponential convergence to equilibrium of solutions to \eqref{eq:crossdiff} were already proven for several diffusion matrices $A$ (see \cite{chen2006,jungel2012,zamponi2017,alasio2020}) and in particular recently for the PVD cross-diffusion matrix \cite{hopf2022multi}.
\medskip

This work is concerned with the stabilization of the linearized version of \eqref{cross-diff} around uniform equilibrium states (precisely defined in Sections \ref{sec:properties}, \ref{sec:linearized}, \ref{sec:definitions}), under appropriate assumptions on the cross-diffusion matrix $A$. These assumptions build on the usual entropic structure conditions stated in~\cite{jungel2015,bakhta2018} to address the nonlinear problem, together with an additional symmetry assumption on the \emph{mobility matrix} of the system at the state considered. In particular, the PVD model in~\cite{bakhta2018} satisfies these conditions around any such state. In this paper, we show that we can obtain finite-time stabilization of the linearized system, with a feedback control derived using the backstepping technique inspired from \cite{coron2017}.

\medskip

First introduced in \cite{backstepping1,backstepping2,backstepping3} for finite dimensional systems, the backstepping approach was later used and adapted for PDE in \cite{backsteppingadapt,BaloghKrstic,BBK2003,krstic2004,KrsticBook}. It consists in transforming the original system, hard to stabilize, into a simpler target system, using an isomorphism. The main difficulty is then to show the existence of such an isomorphism. The usual backstepping approach for PDE, presented in \cite{KrsticBook}, searches for isomorphisms under the form of a Volterra transform of the second kind (see \eqref{eq:defT}), which are conveniently always invertible, among other advantages. Some attempt to introduce a generalized backstepping approach which does not necessarily rely on Volterra transforms have also been introduced in \cite{CoronLu,GagnonCoronMorancey,GagnonMarx,Zhang1,Zhang2,CHXZ2021,GHXZ2021}.
The Volterra approach has been used in many areas and for many systems in the last decades including parabolic equations (see for instance \cite{baccoli2015,coron2017,espitia2019boundary}), hyperbolic system (see for instance \cite{Krstichyperbolic,2011-Vazquez-Coron-Krstic-Bastin-CDC,JF2016,auriol2018delay,Krstic2016,Hu2015backstepping,CHS2021}), etc.
However, no result exists on diffusion system of the form \eqref{cross-diff} where the domain extends with time (in a way that is not compensated in the dynamics). The reason is that this situation brings new difficulties, in particular the backstepping transform has to depend on time and one has to make sure that this does not jeopardize the exponential stability (resp. the finite time stability). Indeed, when applying the transformation backward to obtain the exponential stability of the original system, the cost of the estimate depends on the norm of the backstepping transform and of the norm of its inverse, which depends itself on time (see \eqref{stability_loop}). If this norm goes to infinity exponentially fast, it could be that the original system is not exponentially stable, let alone finite-time stable, even though the target system is. One can still note \cite{izadi2015,izadi2015b} where the authors also consider a backstepping approach applied to a parabolic equation in a time-dependent domain. However, their situation is different thanks to their dynamics, and in both cases they do not consider the norm of the inverse of the backstepping transform. Concerning parabolic equations with time-dependent coefficients, one can also note the work by Smyshlyaev and Krstic \cite{smyshlyaev2005control} which considers a heat equation with a time-dependent reactivity and the work by Kerschbaum and Deutscher \cite{kerschbaum2019backstepping} where the authors consider the exponential stability of a system of parabolic equations with a diagonal diffusion and a time-dependent reactivity. In both cases the difficulty lies in the existence of a time-dependent kernel for the transform and is dealt by converting the kernel equations into an integral equation, using fixed point and successive approximations. We do not use such a method here as we aim for a more generic method that could be applied to more complicated systems and steady-states, and that can allow estimates such as \eqref{eq:estimate-k} that are so crucial to reach finite time stabilization.

\medskip

The outline of the paper is the following: we first recall the main mathematical properties of the moving boundary cross-diffusion system introduced in~\cite{bakhta2018} in Section~\ref{sec:preliminaries} and present the linearized version of this system we focus on in this work. Our main theoretical results are gathered in Section~\ref{sec:mainres}. The description of the backstepping transformation we consider here is given in Section~\ref{sec:backstepping}. Proofs of our results are gathered in Section~\ref{sec:proofs}. Additional details and some technical calculations are added in the Appendices.

\section{Preliminaries}\label{sec:preliminaries}

The aim of this section is to recall the main mathematical properties of the system studied in~\cite{bakhta2018} and to introduce the linearized version of this system we focus on in this work. In Section~\ref{sec:entropy}, we recall the assumptions needed on the diffusion matrix $A$ for the associated cross-diffusion system to have an entropic structure and state the additional assumptions required by the stabilization analysis presented in this work. Mathematical properties of system \eqref{domain}-\eqref{cross-diff} are discussed in Section~\ref{sec:properties}. Finally, the linearized version of system~\eqref{domain}-\eqref{cross-diff} which we will focus on in this article is introduced in Section~\ref{sec:linearized}.

\medskip

\noindent \textbf{Notations:}
Let us first introduce some useful notation. For any continuous non-decreasing positive function $\widetilde{e}: \mathbb{R}_+ \to \mathbb{R}_+^*$ and any $T>0$, we define the sets $U_{\widetilde{e}}:= \cup_{t \in \R_+^*} \{t\}\times (0,\widetilde{e}(t))$ and $U_{\widetilde{e}}^T:= \cup_{t \in (0,T)} \{t\}\times (0,\widetilde{e}(t))$, the time-space moving domains we consider in this paper. For any $0 < T \leq +\infty$, any $1 \leq p,q \leq \infty$, any $k \in \Z$, we denote by  $\left[L^p((0,T),W^{k,q}) \right]_{\tilde{e}}$ the set of measurable functions $f$ from $U_{\tilde{e}}^T$ to $\R$ such that respectively: if $p < \infty$
\[ \left(\int_0^T \|f(t)\|_{W^{k,q}(0,\tilde{e}(t))}^p dt \right)^{1/p} < \infty, \]
and if $p = \infty$,
\[ \sup_{0 \leq t \leq T} \|f(t)\|_{W^{k,q}(0,\tilde{e}(t))} < \infty.  \]
These quantities define norms, naturally denoted by  $\|\cdot\|_{\left[L^p((0,T),W^{k,q}) \right]_{\tilde{e}}}$, which in turn induce a Banach structure. We generalize this notation for functions defined in more general time intervals $(t_1,t_2)$ for $0\leq t_1 \leq t_2 \leq T$. The sets $\left[L^p_{\rm loc}((0,T),W^{k,q}) \right]_{\tilde{e}}$ are defined similarly. The space $\left[\mathcal C^0((0,T),L^p) \right]_{\tilde{e}}$ is defined as the set of functions $f: U_{\widetilde{e}}^T \to \mathbb{R}$ such that the function $(0,T) \times (0,1) \ni (t,x) \mapsto f(t,x\widetilde{e}(t))$ belongs to $\mathcal C^0((0,T); L^p(0,1))$.

\subsection{Entropic structure of the nonlinear system}\label{sec:entropy}

We detail in this section the assumptions needed on the diffusion matrix $A$ to get existence of a weak solution to the nonlinear cross-diffusion system \eqref{domain}-\eqref{cross-diff} and introduce some additional notations. These assumptions coincide with the requirements highlighted in~\cite{jungel2015,bakhta2018} for system \eqref{domain})-\eqref{cross-diff} to have an entropic structure. We refer to \cite{jungel2015,desvillettes2015} for more details about the entropic structure of cross-diffusion systems, and to \cite{chen2019} for a discussion about necessary and sufficient conditions for a cross-diffusion system to admit such a structure.

Let $\cD\subset \R^n$ be defined by
\begin{equation}\label{eq:defD}
    \cD := \left \{  (u_1, \cdots,u_n) \in (\R^*_+)^n, \quad \sum \limits_{i=1}^n u_i <1\right \} \subset (0,1)^n.
\end{equation}
Note that a solution $u$ to \eqref{domain}-\eqref{cross-diff} satisfies the constraints \eqref{constraints} if and only if $u(t,x) \in \bD$, for all $t\in \mathbb{R}_+^*$ and all $x\in (0, e(t))$. Note also that, in view of \eqref{constraints}, the strict inequalities in $\eqref{eq:defD}$ imply that the $n+1$ volume fractions are positive. The following set of assumptions on the diffusion matrix $A$ allows guaranteeing that the corresponding cross-diffusion system enjoys a favorable entropic structure.

\medskip

\bfseries Assumptions: \normalfont

\begin{itemize}
    \item [(H0)] $A \in \cC^0(\overline{\cD},\R^{n\times n})$;
    \item [(H1)] there exists a bounded from below strictly convex function $h \in \cC^0(\overline{\cD})$ such that $h\in \cC^2(\cD, \R)$, such that its derivative $Dh:\cD \to \R^n$ is invertible in $\R^n$ and such that (the symmetric part of) the matrix $D^2h(u) A(u)$ is positive semi-definite for all $u \in \cD$;
    \item[(H2)] moreover, there exists $\alpha >0$, and for all $1\leq i \leq n$, there exists $1\geq m_i>0,$ such that for all $z = (z_1, \cdots, z_n)^T \in \R^n$ and $u=(u_1,\cdots, u_n)^T\in \cD$,
          $$
              z^T D^2h(u)A(u) z \geq \alpha \sum_{i=1}^n u_i^{2m_i-2} z_i^2.
          $$
\end{itemize}

The interested reader may consult \cite{jungel2015,bakhta2018,burger2010}, let us briefly comment on these assumptions here. A function $h$ such that (H1) and (H2) hold is called an \emph{entropy density} of the cross-diffusion system. The associated \emph{entropy functional} $\mathcal{E}$ is then defined by
\begin{equation}
    \label{eq:defE}
    \cE : \left\{
    \begin{array}{ccc}
        L^\infty(\Omega; \overline{\cD}) & \longrightarrow & \R                                     \\
        u                                & \longmapsto     & \cE (u): =  \int_{\Omega} h(u(x))\,dx, \\
    \end{array}
    \right .
\end{equation}
and for all $u\in  L^\infty(\Omega; \overline{\cD})$, we identify the differential $D\mathcal{E}(u)$ with its Euclidean gradient which is equal to the function $Dh(u)$.

\medskip

The first equation of system \eqref{cross-diff} can then be formally rewritten under the following form:

\begin{equation} \label{eq:GF}
    \partial_t u - \dive_x \left( M(u) \nabla_x D\cE(u) \right) = 0, \quad \mbox{ for } (t,x) \in U_e= \bigcup_{t\in \mathbb{R}_+^*} \{t\} \times (0,e(t)),
\end{equation}
where $M:\cD \to \R^{n\times n}$ is the so-called \textit{mobility matrix} of the system and is defined for all $u\in \cD$ by
\begin{equation} \label{eq:defM}
    M(u):= A(u) (D^2h(u))^{-1}.
\end{equation}

From formulation \eqref{eq:GF} and under assumption (H1), one can check that $\cE$ is a Lyapunov functional of the system, which justifies the term ``entropy functional''. The fact that $Dh$ is invertible allows one to work with the so-called entropic variables $w := Dh(u)$ and to automatically get a solution $u \in \cD$ compatible with the constraints. Finally, under assumption (H2), (the symmetric part of) the mobility matrix $M(u)$ is even positive definite, so that the formulation \eqref{eq:GF} is even coercive and one can derive gradient estimates. In physical applications, this entropic structure has a thermodynamic interpretation and in particular the entropic variables $w$ are strongly linked to the notion of chemical potential (see Appendix~A in \cite{jungel2015}).

\begin{remark} \label{rq:pvd}
    One particular example of diffusion matrix $A$ is studied in~\cite{bakhta2018} for the PVD model used in photovoltaics applications. This diffusion matrix is defined as follows: for all $u:=(u_i)_{1\leq i \leq n}\in \mathbb{R}^n$, $A(u) = \left( A_{ij}(u)\right)_{1\leq i,j \leq n} \in \mathbb{R}^{n\times n}$ where
    \begin{equation}\label{eq:defA}
        \left\{
        \begin{aligned}
             & \forall 1\leq i \leq n, ~ A_{ii}(u) = \sum_{1 \leq j \neq i \leq n} (K_{ij}-K_{i0})u_j + K_{i0}, \\
             & \forall 1\leq i \neq j \leq n, ~ A_{ij}(u) = -(K_{ij}-K_{i0})u_i.
        \end{aligned}
        \right.
    \end{equation}
    where, for all $0 \leq i \neq j \leq n$, the positive real numbers $K_{ij}$ satisfy $K_{ij} = K_{ji} > 0$ and represent the cross-diffusion coefficients of atoms of type $i$ with atoms of type $j$. Note that $A(u)$ is \emph{not} a symmetric matrix in general.
    It is proved in~\cite{bakhta2018} that the diffusion matrix defined by \eqref{eq:defA} satisfies assumptions (H0)-(H1)-(H2), with $m_i=\frac{1}{2}$ for all $1\leq i \leq n$ and with the function $h$ being defined as the classical Boltzmann entropy density:
    \begin{equation}\label{eq:defh}
        h : \left\{
        \begin{array}{ccc}
            \overline{\cD}            & \longrightarrow & \R                                                                      \\
            u:=(u_i)_{1\leq i \leq n} & \longmapsto     & h(u) =   \sum \limits_{i=1}^n u_i \log u_i + (1-\rho_u)\log (1-\rho_u), \\
        \end{array}
        \right .
    \end{equation}
    where for all $u=(u_i)_{1\leq i \leq n} \in \mathbb{R}^n$, $\rho_u := \sum_{i=1}^{n} u_i$. Furthermore, the mobility matrix associated to \eqref{eq:defA} and \eqref{eq:defh} is given for $u \in \D$ as $M(u)=(M_{ij}(u))_{1 \leq i,j \leq n}$, where:
    \begin{equation*}
        \left\{
        \begin{aligned}
             & \forall 1 \leq i \leq n, ~ M_{ii}(u) = \sum_{1 \leq j \neq i \leq n} K_{ij} u_i u_j + K_{i0}u_i(1-\rho_u), \\
             & \forall 1 \leq i \neq j \leq n, ~ M_{ij}(u) = -K_{ij}u_i u_j.
        \end{aligned}
        \right.
    \end{equation*}
    Note that $M(u)$ is always a symmetric matrix.
\end{remark}

\subsection{Main mathematical properties of the nonlinear model}\label{sec:properties}

The aim of this section is to recall the main mathematical properties of the nonlinear model \eqref{domain}-\eqref{cross-diff} and highlight the open questions on the control and stabilization of this system that are of interest here.

\medskip

It was proved in~\cite{bakhta2018} that there exists at least one weak solution to system \eqref{domain}-\eqref{cross-diff} satisfying the constraints \eqref{constraints} in the following sense:
\begin{theorem}[Theorem 2 of \cite{bakhta2018}]\label{th:moving}
    Assume $A$ satisfies assumptions (H0)-(H1)-(H2) and let $h: \bD \to \mathbb{R}$ be the associated function so that (H1) and (H2) are satisfied. Let us assume that $u^0\in L^\infty((0,e_0); \cD)$ satisfies $w^0:= (Dh)(u^0) \in L^\infty((0,e_0); \R^n)$.
    Let us also assume that $(\phi_0, \cdots , \phi_n)\in L^{\infty}_{\rm loc}(\R^*_+; \R_+^{n+1})$.
    Then, there exists a weak solution $u$ with initial condition $u^0$ to \eqref{domain}-\eqref{cross-diff}
    such that for almost all $(t,x)\in U_{e}$, $u(t,x)\in \overline{\cD}$, and
    \[
        u\in \left[L^2_{\rm loc}(\R^*_+; H^1)^n\right]_e \quad \mbox{ and }\quad \partial_t u \in \left[L^2_{\rm loc}(\R^*_+; (H^1)')^n\right]_e.
    \]
\end{theorem}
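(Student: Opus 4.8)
The thickness $e$ is not an unknown here: by \eqref{domain} it is determined a priori from the given fluxes, and since $(\phi_0,\dots,\phi_n)\in L^\infty_{\rm loc}(\R_+^*;\R_+^{n+1})$ the function $e$ is locally Lipschitz, nondecreasing and bounded below by $e_0>0$. The problem therefore decouples, and one only has to solve the cross-diffusion equation on the \emph{known} moving domain. The plan is to freeze the geometry by the change of variables $y:=x/e(t)\in(0,1)$, $v(t,y):=u(t,ye(t))$, which maps $U_e$ onto the fixed cylinder $(0,\infty)\times(0,1)$ and transforms the first line of \eqref{cross-diff} into
\begin{equation}
\partial_t v = \frac{1}{e(t)^2}\,\partial_y\!\left(A(v)\partial_y v\right) + \frac{e'(t)}{e(t)}\,y\,\partial_y v,
\end{equation}
with the homogeneous Neumann condition at $y=0$ and the flux condition $\tfrac{1}{e(t)}(A(v)\partial_y v)(t,1)+e'(t)v(t,1)=\varphi(t)$ at $y=1$. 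On this fixed domain I would run the boundedness-by-entropy method of \cite{jungel2015,bakhta2018}.

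The construction is carried out in the entropic variable $w:=Dh(u)$. Since $Dh:\cD\to\R^n$ is a bijection by (H1), one solves for $w$ with no constraint and recovers $u:=(Dh)^{-1}(w)$, which automatically lands in $\cD$ — this is precisely why the constraints \eqref{constraints} hold for free (the assumption $w^0=Dh(u^0)\in L^\infty$ just guarantees a finite initial entropy). The engine of the proof is the entropy balance: differentiating $t\mapsto\int_0^{e(t)}h(u)\,dx$ with the Reynolds transport formula, integrating by parts and inserting the boundary conditions yields
\begin{equation}
\frac{d}{dt}\int_0^{e(t)}h(u)\,dx + \int_0^{e(t)}(\partial_x u)^{T} D^2h(u)\,A(u)\,\partial_x u\,dx = Dh(u)\cdot\varphi\big|_{x=e(t)} + e'(t)\bigl(h(u)-Dh(u)\cdot u\bigr)\big|_{x=e(t)}.
\end{equation}
By (H2) the dissipation integrand is bounded below by $\alpha\sum_i u_i^{2m_i-2}(\partial_x u_i)^2$, and since $m_i\le 1$ and $0<u_i<1$ force $u_i^{2m_i-2}\ge 1$, this in fact controls the full gradient $|\partial_x u|^2$. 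The growth contribution on the right is favorable: for the Boltzmann entropy \eqref{eq:defh} one computes $h(u)-Dh(u)\cdot u=\log(1-\rho_u)\le 0$ and $e'\ge 0$. Once the remaining injection term $Dh(u)\cdot\varphi$ is controlled, a Gronwall argument gives, on any finite horizon $(0,T)$, a uniform bound on $\cE(u(t))$ and on $u$ in $\left[L^2((0,T),H^1)^n\right]_e$; feeding the gradient bound back into the equation bounds $\partial_t u$ in $\left[L^2((0,T),(H^1)')^n\right]_e$.

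I would make this rigorous through an approximation scheme that preserves the entropic structure: an implicit Euler (Rothe) time-discretization of the fixed-domain problem posed in $w$, regularized by a small elliptic term in $w$ to guarantee coercivity and hence solvability of each nonlinear elliptic step by a fixed-point or minimization argument. The discrete, regularized analogue of the entropy balance furnishes bounds uniform in the time step $\tau$ and the regularization $\varepsilon$. Passing to the limit $\tau,\varepsilon\to 0$ then rests on compactness: the gradient bound gives weak compactness of $v$ in $L^2((0,T),H^1(0,1))$, the equation controls $\partial_t v$ in $L^2((0,T),(H^1(0,1))')$, and an Aubin--Lions--Simon argument yields strong $L^2$ (hence a.e.) convergence of $v$, enough to pass to the limit in the continuous nonlinearity $A(v)$ and in the boundary flux before undoing the change of variables.

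The main obstacle is that neither coercivity nor the bound $u\in\bD$ is available directly from $A$ — which is not symmetric and degenerates — but only through the entropy, so the whole argument must be conducted in the entropic variables and every approximation must be chosen so that $\cE$ remains a Lyapunov functional. Superimposed on this is the moving boundary: the change of variables produces the drift $\tfrac{e'}{e}y\,\partial_y v$ and, crucially, the growth term $e'(t)u$ at $x=e(t)$, whose contribution to the entropy balance must be kept under control. Within that balance the injection term $Dh(u)\cdot\varphi$ at the moving boundary is the genuinely delicate point, since $Dh$ may blow up near $\partial\cD$; it is here, rather than in the fixed-domain estimates of \cite{jungel2015}, that the precise form of the boundary condition in \eqref{cross-diff} together with the nonnegativity of the fluxes must be exploited.
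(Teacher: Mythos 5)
This theorem is not proved in the paper: it is quoted verbatim as Theorem~2 of \cite{bakhta2018}, and the only indication of its proof given here is that it follows by adapting the boundedness-by-entropy method of \cite{jungel2015}, so there is no line-by-line argument to compare against. Your sketch (rescaling to the fixed domain $(0,1)$, working in the entropic variable $w=Dh(u)$, the entropy balance with exactly the boundary terms you identify, and a regularized implicit-Euler scheme passed to the limit via Aubin--Lions) is precisely that strategy; the one point to watch is that you invoke the Boltzmann form of $h$ to sign the growth term $e'(t)\bigl(h(u)-Dh(u)\cdot u\bigr)$, whereas the theorem assumes only (H1)--(H2) for a general entropy --- but convexity and continuity of $h$ on $\overline{\cD}$ give $h(u)-Dh(u)\cdot u\le h(0)$, so this term stays controllable on finite time horizons and the gap is cosmetic.
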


In the case when the fluxes $(\phi_i)_{0\leq i \leq n}$ are constant in time, it is legitimate to wonder if the volume fraction profiles $(u_i)_{0\leq i\leq n}$ will converge to some constant profiles, and if yes, at which rate. The following result was proved in~\cite{bakhta2018} under the assumption that the entropy density $h$ of the system is given by \eqref{eq:defh}.

\begin{proposition}[Proposition 1 of \cite{bakhta2018}]\label{prop:longtime}
    Let us assume that the assumptions of Theorem~\ref{th:moving} hold together with the following ones:
    \begin{itemize}
        \item[(T1)] for all $0\leq i \leq n$, there exists $\bphi_i > 0$ so that $\phi_i(t) = \bphi_i$, for all $t\in \R_+$;
        \item[(T2)] the entropy density $h$ can be chosen so that for all $u\in \overline{\mathcal D}$,  $h(u)$ is defined by \eqref{eq:defh}.
    \end{itemize}
    Let us define
    \begin{equation}
        \label{eq:speed}
        \bv := \sum_{i=0}^n \bphi_i,
    \end{equation}
    and for all  $0\leq i \leq n$, 
    \begin{equation}
        \label{eq:stationary-fraction}
        \bu_i:= \frac{\bphi_i}{\bv}
    \end{equation}
    so that $\bu:=(\bu_i)_{1\leq i \leq n}\in \cD$. Then, there exists a constant $C>0$ such that for almost all $t\geq 0$,
    \[\forall 1\leq i \leq n, \quad \frac{1}{\be(t)}\|u_i(t,\cdot) - \bu_i\|_{L^1(0,\be(t))} \leq \frac{C}{\sqrt{t+1}}, \]
    and
    \[ \frac{1}{\be(t)}\left\|\left( 1 - \rho_{u(t,\cdot)}\right) - \bu_0\right\|_{L^1(0,\be(t))} \leq \frac{C}{\sqrt{t+1}},\]
    where $\overline{e}(t)$ is the thickness at time $t$ of the layer, given by
    \begin{equation}
        \label{eq:stationary-domain}
        \overline{e}(t) = \be_0 + \int_0^t \sum_{i=0}^n \phi_i(s)\,ds = \be_0 + t \bv,
    \end{equation}
    for some value of the initial thickness of the layer $\be_0>0$.
\end{proposition}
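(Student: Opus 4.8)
The plan is to treat the spatially uniform profile $\bu$ (completed by $\bu_0=\bphi_0/\bv$) as an equilibrium and to use the \emph{relative Boltzmann entropy} as a Lyapunov functional. Concretely, set
\[
H(u\mid \bu):=\sum_{i=0}^n u_i\log\frac{u_i}{\bu_i}=h(u)-h(\bu)-Dh(\bu)\cdot(u-\bu),
\]
the Bregman divergence of $h$ from \eqref{eq:defh} (with $u_0=1-\rho_u$), which is nonnegative and vanishes only at $u=\bu$, and define $\mathcal H(t):=\int_0^{\be(t)}H(u(t,x)\mid\bu)\,dx$. The three steps are: (i) show that $\mathcal H$ is nonincreasing; (ii) observe that $\mathcal H(0)<\infty$; (iii) convert the bound $\mathcal H(t)\le\mathcal H(0)$ into the stated $L^1$ estimate through a Csisz\'ar--Kullback--Pinsker inequality combined with the linear growth of $\be(t)$.

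For (i) I would differentiate $\mathcal H$ using the Reynolds transport formula, which produces a bulk term $\int_0^{\be(t)}\partial_t H(u\mid\bu)\,dx$ plus a moving-boundary term $\be'(t)\,H(u(t,\be(t))\mid\bu)$. Since $\partial_t H(u\mid\bu)=(Dh(u)-Dh(\bu))\cdot\partial_t u$ and $\partial_t u=\partial_x(A(u)\partial_x u)$, one integration by parts yields a dissipation term $-\int_0^{\be(t)}(\partial_x u)^TD^2h(u)A(u)\,\partial_x u\,dx\le0$ (nonpositive by (H1)) and two boundary contributions. The key algebraic point is the boundary at $x=\be(t)$: using the homogeneous Neumann condition at $x=0$, the flux boundary condition $(A(u)\partial_x u)(t,\be(t))=\varphi-\be'(t)u(t,\be(t))$, and crucially the fact that the injected composition matches the equilibrium, $\varphi=\bv\,\bu$ and $\be'(t)=\bv$, the boundary terms collapse to $-\bv\,H(\bu\mid u(t,\be(t)))\le0$. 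Altogether
\[
\frac{d}{dt}\mathcal H(t)=-\bv\,H\big(\bu\mid u(t,\be(t))\big)-\int_0^{\be(t)}(\partial_x u)^TD^2h(u)A(u)\,\partial_x u\,dx\le0 .
\]

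Step (ii) is immediate: on the compact simplex $\bD$ the relative entropy satisfies $H(u\mid\bu)\le-\sum_i u_i\log\bu_i\le-\log(\min_i\bu_i)<\infty$ since $\bu$ lies in the interior of $\bD$, so $\mathcal H(0)\le \be_0\,(-\log\min_i\bu_i)$. For step (iii), the Csisz\'ar--Kullback--Pinsker inequality applied pointwise in $x$ to the probability vectors $(u_i(t,x))_i$ and $(\bu_i)_i$ gives $\big(\sum_{i=0}^n|u_i(t,x)-\bu_i|\big)^2\le 2\,H(u(t,x)\mid\bu)$; hence for each $0\le i\le n$, using Jensen and Cauchy--Schwarz on $(0,\be(t))$,
\[
\frac{1}{\be(t)}\,\|u_i(t,\cdot)-\bu_i\|_{L^1(0,\be(t))}\le\Big(\frac{1}{\be(t)}\int_0^{\be(t)}|u_i-\bu_i|^2\,dx\Big)^{1/2}\le\Big(\frac{2\,\mathcal H(t)}{\be(t)}\Big)^{1/2}\le\Big(\frac{2\,\mathcal H(0)}{\be_0+\bv t}\Big)^{1/2}.
\]
Since $\be(t)=\be_0+\bv t$ grows linearly, the right-hand side is $\le C/\sqrt{t+1}$, which yields both claimed estimates at once (the case $i=0$ being exactly the $1-\rho_u$ bound).

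The point worth stressing is conceptual as much as technical: the decay is driven by \emph{dilution} (a bounded relative entropy spread over a domain of length $\sim\bv t$), not by a spectral gap, which is why no quantitative decay rate for $\mathcal H$ itself is required---only its monotonicity. The main obstacle is therefore rigor at the level of weak solutions: the computation in (i) is formal, and the manipulations (the Reynolds term, the integration by parts, and the trace $u(t,\be(t))$ together with the boundary flux) must be justified through the regularized problem used to construct the solution in Theorem~\ref{th:moving}. In practice one only needs the \emph{inequality} $\mathcal H(t)\le\mathcal H(0)$ for a.e.\ $t$, which is the entropy estimate already underlying the existence proof; this is also why the conclusion is stated for almost every $t\ge0$.
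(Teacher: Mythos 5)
The paper itself contains no proof of Proposition~\ref{prop:longtime}: it is recalled verbatim from \cite{bakhta2018} (their Proposition~1), so there is no in-paper argument to compare against. Your proposal --- monotonicity of the relative Boltzmann entropy $\mathcal H(t)=\int_0^{\be(t)}H(u(t,x)\mid\bu)\,dx$, with the Reynolds and flux boundary terms collapsing to $-\bv\,H(\bu\mid u(t,\be(t)))\le 0$ precisely because $\varphi=\bv\,\bu$ and $\be'=\bv$, followed by Csisz\'ar--Kullback--Pinsker and division by the linearly growing length $\be(t)$ --- is correct at the formal level and is essentially the entropy/dilution argument used in the cited reference; the only remaining work, which you rightly flag, is to justify the inequality $\mathcal H(t)\le\mathcal H(0)$ for the weak solutions of Theorem~\ref{th:moving} through the regularized construction rather than by direct differentiation.
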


Let us make a few comments about this result.
\begin{itemize}
    \item In this specific case, the thickness of the boundary layer $\be(t)$ grows linearly with constant speed $\bv$.
    \item Proposition~\ref{prop:longtime} does not state that the quantity $\|u_i(t,\cdot) - \bu_i\|_{L^1(0,\be(t))}$ goes to $0$ as $t$ goes to infinity, it only enables to guarantee the existence of a constant $C>0$ such that
          \[ \forall t>0, \quad \|u_i(t,\cdot) - \bu_i\|_{L^1(0,\be(t))} \leq C\sqrt{t}. \]
          Proposition~\ref{prop:longtime} still states that the \emph{rescaled} volume fraction profiles converge to constants in the long-time limit. More precisely, denoting by $v(t,y) := u(t,\be(t)y)$ for all $t>0$ and $y\in (0,1)$ and by $v_i$ the $i^{th}$ component of $v$ for $1\leq i \leq n$, it holds that
          \begin{equation}
              \label{eq:average}
              \forall 1\leq i \leq n, \quad \|v_i(t,\cdot) - \bu_i\|_{L^1(0,1)} = \frac{1}{\be(t)} \|u_i(t,\cdot) - \bu_i\|_{L^1(0,\be(t))} \leq \frac{C}{\sqrt{t+1}}.
          \end{equation}
    \item In the case of constant fluxes $(\bphi_i)_{0\leq i \leq n}$, i.e. when the thickness of the film at all time $t>0$ is equal to $\overline{e}(t)$, and when the initial condition $u^0$ is equal to $\bu$, it can be easily checked that the function $u$ defined by $u(t,x) = \bu$ for all $t>0$ and $x\in (0, \overline{e}(t))$ is a solution to system \eqref{domain}-\eqref{cross-diff}. Therefore, we use the denomination ``target state of \eqref{cross-diff}'' to refer to a couple $(\bu,\mathbb{R}_+ \ni t \mapsto \be(t))$.
\end{itemize}

The preceding remarks provided the main source of motivation for this work about the stabilization of system \eqref{domain}-\eqref{cross-diff}. Assuming that the initial condition $u^0$ at time $t=0$ is chosen as a small perturbation of $\bu$, of the form $u^0 = \bu + \delta u^0$, and that the initial thickness of the film $e_0$ at time $t=0$ is a small perturbation of the initial thickness $\overline{e}_0$, i.e. $e_0 = \be_0 + \delta e_0$, does there exist a set of feedback fluxes $(\phi_i)_{0\leq i \leq n}$ such that for a time $t$ large enough, the volume fraction profiles $u(t)$ and thickness of the system $e(t)$ converge to $\bu$ and $\overline{e}(t)$ in a stronger norm than the average $L^1$ norm used in \eqref{eq:average} ? In other words, can the system be stabilized around the target state $(\bu, \overline{e})$ and at which rate ? Can exponential stability or finite-time stability be achieved, i.e. can the system be stabilized at a rate much better than the one provided by the strategy which would consist in keeping the fluxes $\phi_i$ constant and equal to $\overline{\phi_i}$ as considered in Proposition~\ref{prop:longtime} ?

\medskip

This work can be seen as an important first step in this direction. Indeed, we provide answers on the stabilization of a \itshape linearized \normalfont version of the system \eqref{domain}-\eqref{cross-diff}. From this result, we expect to be able to obtain the \emph{local} stabilization of the original nonlinear system in a future work.

\subsection{Linearized system and control variables}\label{sec:linearized}

The aim of this section is to introduce the linearized system which is the main focus of this paper, together with an appropriate change of control variables that enables to decouple the control analysis of the volume fractions and the thickness of the domain.
\medskip

Recall that we consider small perturbations $(\delta u^0,\delta e_0)$ at $t=0$ around the initial condition $\bu$ given by \eqref{eq:stationary-fraction} and initial thickness $\overline{e}_0$. Assuming that the imposed fluxes on the system are of the form $\phi_i(t) = \bphi_i + \delta \phi_i(t)$ for all $0\leq i \leq n$ and $t>0$, we wish to investigate the linearized dynamic of $(\delta u(t,\cdot), \delta e(t))$ which can be seen as first-order corrections of $(u(t,\cdot)-\bu,e(t)-\be(t))$, where $\be$ is given by \eqref{eq:stationary-domain}. Recall also the notation \eqref{eq:speed} for the growth speed of the layer $\be$.

\medskip

Then, the first order correction of the thickness reads, for all $t \geq 0$:
\begin{equation}
    \label{thickness}
    \delta e(t) =  \int_0^t \sum_{i=0}^{n} \delta \phi_i(s) ds + \delta e_0, \quad \mbox{ and } \quad \delta e'(t) = \sum_{i=0}^{n} \delta \phi_i(t).
\end{equation}

In addition, the first-order corrections of the system \eqref{cross-diff} around the target state $(\bu, \overline{e})$ yields the following system, the solution of which is $\delta u$, for given $\delta u^0$, $\delta \varphi:= (\delta \phi_1, \cdots, \delta \phi_n)^T$ and $\delta \phi_0$:
\begin{equation}
    \label{linearized}
    \left\{
    \begin{aligned}
        \partial_t \delta u - A(\bu) \partial_{xx}^2 \delta u       & = 0,             & \text{for} ~ (t,x) \in U_{\be},          \\
        A(\bu)\partial_x \delta u(t,\be(t)) + \bv\delta u(t,\be(t)) & = \delta\psi(t), & \text{for} ~ t \in \R_+^*,               \\
        A(\bu) \partial_x \delta u(t,0)                             & = 0,             & \text{for} ~ t \in \R_+^*,               \\
        \delta u(0,x)                                               & = \delta u^0(x), & ~ \text{for} ~ x \in (0,\overline{e}_0).
    \end{aligned}
    \right.
\end{equation}
where for any $t \geq 0$,
\begin{equation}
    \label{effective_control}
    \delta \psi(t) := \delta \varphi(t) - \delta e'(t)  \bu \in \R^n.
\end{equation}

Remark that the solution $\delta u$ to system \eqref{linearized} only depends on the $n$ independent control variables denoted by $\delta \psi = (\delta \psi_i)_{1 \leq i \leq n}$. Therefore, since we originally had $n+1$ control variables $(\delta \phi_{i})_{i\in\{0,..,n\}}$, it remains an extra degree of freedom. This degree of freedom ought to be designed exclusively for the stabilization of the thickness $\delta e$. We make this explicit by defining a new control variable as for any $t \geq 0$:
\begin{equation}
    \label{eq:thickness-variable}
    \delta \theta (t) := \sum_{i=0}^n \delta \phi_i(t),
\end{equation}
such that for any $t\geq0$:
\begin{equation} \label{eq:thickness}
    \delta e(t) = \int_0^t \delta \theta(s)ds + \delta e_0
\end{equation}
Now we claim that the change of control variables $(\delta \phi_0,\dots,\delta \phi_n) \to (\delta \theta, \delta \psi_1,\dots,\delta \psi_n)$, defined according to \eqref{effective_control} and \eqref{eq:thickness-variable} is invertible. Indeed, it can be checked that for any $t > 0$,
\[ \delta \varphi(t) = \delta \psi(t) + \delta \theta(t) \bu, \]
and
\[\delta \phi_0(t) = \delta \theta(t) - \sum_{i=1}^n \left( \delta \psi_i(t) + \delta \theta(t) \bu_i \right).  \]

This new choice of control variables $(\delta \theta, \delta \psi)$ is more convenient for our analysis since we can now completely decouple the control analysis of the thickness and of the volume fractions respectively, as will be made clear in Section~\ref{sec:proofs}.

\section{Stabilization of the linearized system: main results}\label{sec:mainres}

The aim of this section is to present the main results of this work, which focuses on the stabilization of the linearized system \eqref{thickness}-\eqref{linearized}. In Section~\ref{sec:definitions}, we introduce the precise notions of weak solutions and stability considered here. In Section~\ref{sec:results} are stated our main theoretical results, and we decompose the problem into $n$ \emph{scalar} problems.
Finally, in Section~\ref{sec:backstepping}, we detail our backstepping strategy to stabilize the scalar problem.

\subsection{Main definitions}\label{sec:definitions}

We first need to specify the notion of solution to system \eqref{linearized} we will consider here. In the following, we are interested in the stabilization with the spatial $L^2$ norm, so defining an appropriate notion of weak solution in $L^2$ for $L^2$ initial data is needed for our analysis to hold. In our case, anticipating slightly on the next section, the fluxes will be defined as a closed-loop feedback of the form
\begin{equation}\label{eq:flux-eq}
    \delta\psi(t) = \Psi(t,\delta u(t)),
\end{equation}
where $\delta u(t) = \delta u(t,\cdot)$ is the solution function at time $t$ defined in the space domain $(0,\be(t))$ and where the application $\Psi$ is decomposed into a non-local integral part and a local multiplication operator at $x = \overline{e}(t)$ (recall the expression \eqref{eq:stationary-domain} of $\be(t)$). More precisely, the application $\Psi$ will be of the following form: for almost all $t \geq 0$ and all $z\in H^1(0,\overline{e}(t))^n$,
\begin{equation}\label{abstract-feedback}
    \Psi(t,z):= H_{nl}(t)z + H_l(t)z,
\end{equation}
where the family of operators $(H_{nl}(t))_{t \geq 0}$ and $(H_l(t))_{t \geq 0}$ will be required to satisfy the following properties (in fact, these conditions are necessary to give a meaning to our definition of weak solution, see Definition~\ref{def:weak-solution} below):

\medskip

\bfseries Properties of operators:\normalfont

\begin{itemize}\label{properties-operator}
    \item [(P1)] for almost all $t \geq 0$, $H_{nl}(t)$ is a continuous linear application from $L^2(0,\overline{e}(t))^n$ to $\mathbb{R}^n$;
    \item [(P2)] for all $T>0$, and all $z \in \left[L^2((0,T), (L^2))^n\right]_{\be}$, the application $(0,T)\ni t \mapsto H_{nl}(t)z(t)$ belongs to $L^2(0,T)^n$. Moreover, there exists a constant $C(T)>0$ such that
          $$
              \left\|H_{nl}(\cdot) z(\cdot)\right\|_{\left[L^2((0,T), (L^2))^n\right]_{\be}} \leq C(T) \| z\|_{ \left[L^2((0,T), (L^2))^n\right]_{\be}};
          $$

    \item [(P3)] for almost all $t \geq 0$, the operator $H_l(t): H^1(0, \overline{e}(t))^n \to \mathbb{R}^n$ is defined as follows:
          \begin{equation}\label{eq:Hl}
              \forall z\in  H^1(0, \overline{e}(t))^n,  \quad H_l(t)z:= K_l(t)z(\overline{e}(t))
          \end{equation}
          where $K_l \in L_{loc}^\infty\left(\mathbb{R}_+^*; \mathbb{R}^{n\times n}\right)$ is a locally bounded matrix-valued application.
\end{itemize}

\medskip

Using the particular form of fluxes highlighted above, a weak solution can be defined by testing \eqref{linearized} against regular test functions that satisfy dual boundary conditions (see Definition~\ref{def:weak-solution} below and Appendix~\ref{app:weak} for details). We obtain the following definition:

\begin{definition}[Weak solution in $L^2$]
    \label{def:weak-solution}
    Let $\delta u^0\in L^2(0, \overline{e}_0)$. Let $(H_{nl}(t))_{t \geq 0}$ and $(H_l(t))_{t \geq 0}$ be two families of operators satisfying (P1)-(P2)-(P3). A function $\delta u \in \left[\mathcal C^0(\mathbb{R}_+,L^{2})^n\right]_{\overline{e}}$ such that $\partial_t \delta u \in \left[L^2(\R_+;(H^1)')\right]_{\be}$ is said to be a $L^2$-weak solution to \eqref{linearized} with fluxes $\delta \psi$ defined by \eqref{eq:flux-eq}-\eqref{abstract-feedback} if, for any $T > 0$, it satisfies:

    \[
        \begin{split}
            \int_{0}^{T}\int_{0}^{\be(t)} \delta u(t,x) \cdot \left[\partial_{t}v(t,x)+A(\bu)^T\partial_{xx}^{2}v(t,x) \right] dxdt \\
            + \int_{0}^{\be_0}\delta u^0(x) \cdot v(0,x) dx
            + \int_{0}^{T}(H_{nl}(t)\delta u(t)) \cdot v(t,\be(t))dt = 0,
        \end{split}
    \]

    for any test function $v$ that satisfies:
    \begin{itemize}
        \label{def-Dstar}
        \item $v \in \left[\left(L^2\left((0,T); H^2\right)\right)^n\right]_{\overline{e}}\cap \left[\mathcal C^0([0,T],L^{2})^n\right]_{\overline{e}}$,
        \item $\partial_t v\in \left[\left(L^2\left((0,T); L^2\right)\right)^n\right]_{\overline{e}}$,
        \item $v(T,\cdot) = 0$,
        \item $A(\bu)^T\partial_{x}v(t,0) = 0, ~ \mbox{for almost all }t \in (0,T)$,
        \item $K_l(t)^Tv(t,\overline{e}(t)) - A(\bu)^T\partial_{x}v(t,\be(t)) = 0, \mbox{for almost all } t \in (0,T)$.
    \end{itemize}

    \begin{remark}
        \label{rq:time-derivative}
        One may wonder why the assumption on the time derivative is needed. In fact, we will use this assumption to ensure uniqueness in this class of solutions (see the proof of Corollary~\ref{cor:exp-stability} based on the backstepping transformation). Nevertheless, it is likely that any $L^2$ solution to \eqref{linearized} satisfies this assumption. This would amount to prove a regularity result for \eqref{linearized} (or equivalently a uniqueness result in the class of $L^2$ solutions) that we do not provide in this work. (see however Lemma~\ref{lem:uniqueness} in Appendix~\ref{app:target} about the homogeneous problem)
    \end{remark}
\end{definition}

Similarly, the control of the thickness $\delta \theta$ will also be defined as a closed-loop feedback of the form
\begin{equation}\label{eq:deltatheta}
    \delta \theta(t) = \Theta(t, \delta e(t))
\end{equation}
where the application $\Theta$ will be chosen so that $\Theta \in L^1_{\rm loc}\left( \mathbb{R}_+^* ;\cC^0 (\mathbb{R})\right)$.

\medskip

Let us now give precise definitions for the different notions of stabilization we consider in the present work. We start with the notion of exponential stabilization:

\begin{definition}[Exponential stabilization in $L^2$]
    \label{def:exp-stab}
    Let $\mu>0$. A target state $(\bu,\be)$ of \eqref{cross-diff} is said to be \emph{$\mu$-exponentially stabilizable} in $L^2$ if there exist constants $C_{\bar{u},\mu},C_{\bar{e},\mu} > 0$ such that:
    \begin{itemize}
        \item[a)] There exist families of operators $(H_{nl}(t))_{t\geq0}$ and $(H_l(t))_{t\geq0}$ satisfying properties (P1)-(P2)-(P3), such that, for any perturbation $\delta u^0 \in L^2(0,\overline{e}_0)$, the linearized system \eqref{linearized} with the fluxes defined by \eqref{eq:flux-eq}-\eqref{abstract-feedback} has a unique $L^2$ weak solution $\delta u$ in the sense of Definition~\ref{def:weak-solution} and this solution satisfies:
              \begin{equation}
                  \label{eq:def-stability-concentrations}
                  \|\delta u(t)\|_{L^2(0,\be(t))} \leq C_{\bar{u},\mu} e^{-\mu t} \|\delta u^0\|_{L^2(0,e_0)}, ~ \text{for all} ~ t \geq 0.
              \end{equation}
        \item[b)] There exists a function  $\Theta \in L^1_{\rm loc}\left(\mathbb{R}_+^*;\mathcal C^0(\mathbb{R})\right)$ such that, for any perturbation $\delta e_0 \in \R$, $\delta e$ is well-defined by \eqref{eq:thickness} with $\delta \theta$ defined by \eqref{eq:deltatheta} and satisfies:
              \begin{equation}
                  \label{eq:def-stability-thickness}
                  |\delta e(t)| \leq C_{\bar{e},\mu} e^{-\mu t}|\delta e_0|, ~ \text{for all} ~ t \geq 0.
              \end{equation}
    \end{itemize}
\end{definition}

Let us also give a definition of \emph{finite time} stabilization:

\begin{definition}[Finite time stabilization in $L^2$]
    \label{def:finite-stab}
    Let $T>0$. A target state $(\bu,\be)$ of \eqref{cross-diff} is said to be \emph{stabilizable in finite time $T$} in $L^2$ if:
    \begin{itemize}
        \item[a)] There exist families of operators $(H_{nl}(t))_{t \geq 0}$ and $(H_l(t))_{t \geq 0}$ satisfying properties (P1)-(P2)-(P3), such that, for any perturbation $\delta u^0 \in L^2(0,\be_0),$ the linearized system \eqref{linearized} with the fluxes defined by \eqref{eq:flux-eq}-\eqref{abstract-feedback} has a unique $L^2$ weak solution $\delta u$ in the sense of Definition~\ref{def:weak-solution} and this solution satisfies:
              \begin{itemize}
                  \item[i)] (stability) For any $\epsilon > 0$, there exists $\nu_u > 0$ such that if $\|\delta u^0 \|_{L^2(0,\be_0)} \leq \nu_u$ then for all $t \geq 0$:
                        \begin{equation}
                            \label{eq:def-finite-stability-concentrations}
                            \|\delta u(t) \|_{L^2(0,\be(t))} \leq \epsilon.
                        \end{equation}
                  \item[ii)] (convergence)
                        \begin{equation}
                            \label{T-stability-u}
                            \|\delta u(t)\|_{L^2(0,\be(t))} \to 0 ~ \text{as} ~ t \to T^-.
                        \end{equation}
              \end{itemize}

        \item[b)] There exists a function  $\Theta \in L^1_{\rm loc}\left((0,T) ;\mathcal C^0(\mathbb{R})\right)$ such that, for any perturbation $\delta e_0 \in \mathbb{R}$,  $\delta e$ is well-defined by \eqref{eq:thickness} with $\delta \theta$ defined by \eqref{eq:deltatheta} and satisfies:
              \begin{itemize}
                  \item[i)] (stability) For any $\epsilon > 0$, there exists $\nu_e > 0$ such that if $|\delta e_0| \leq \nu_e$ then for all $t \geq 0$:
                        \begin{equation}
                            \label{def-finite-stability-thickness}
                            |\delta e(t)| \leq \epsilon.
                        \end{equation}
                  \item[ii)] (convergence)
                        \begin{equation}
                            \label{T-stability-e}
                            \delta e(t) \to 0 ~ \text{as} ~ t \to T^-.
                        \end{equation}
              \end{itemize}
    \end{itemize}
\end{definition}

\subsection{Main results}\label{sec:results}
Let us summarize our assumptions here. Let $(\bu,\be)$ be a target state of \eqref{cross-diff} (in the sense of the discussion in Section~\ref{sec:properties}) such that:

\medskip

\textbf{Assumptions:}
\begin{itemize}
    \item[(A1)] $\overline{u}\in \mathcal D$ (which implies that for all $1 \leq i \leq n, ~ \bu_i > 0$ and $1- \rho_{\overline{u}} = 1 - \sum_{i=1}^n \bu_i >0$);
    \item[(A2)] The diffusion matrix application $A$ satisfies assumptions (H0)-(H1)-(H2). Besides, the mobility matrix application $M$ defined by \eqref{eq:defM} is such that $M(\overline{u})$ is symmetric.
\end{itemize}

Let us emphasize here that, in particular, the diffusion matrix $A$ defined by \eqref{eq:defA} in Remark~\ref{rq:pvd} satisfies assumption (A2). The additional requirement that $M(\overline{u})$ is symmetric enables to guarantee that the matrix $A(\overline{u})$ is diagonalizable with positive eigenvalues:

\begin{lemma}
    \label{lem:diagonal}
    Assume that $\bu$ satisfies (A1) and that the diffusion matrix $A$ satisfies (A2). Then it holds that $A(\overline{u})$ is diagonalizable with positive eigenvalues.
\end{lemma}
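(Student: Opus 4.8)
The plan is to exploit the factorization $A(\bu) = M(\bu)\,D^2h(\bu)$, which is immediate from the definition \eqref{eq:defM} of the mobility matrix, and then to reduce the statement to the classical fact that the product of two symmetric positive definite matrices is diagonalizable with positive eigenvalues. Throughout, write $H := D^2h(\bu)$ and $M := M(\bu)$.

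First I would record that $H$ is symmetric positive definite. Symmetry is automatic since $H$ is the Hessian of the $\cC^2$ function $h$, and positive definiteness follows from the strict convexity of $h$ on $\cD$ together with (A1), which guarantees $\bu \in \cD$ so that $H$ is well defined, invertible, and $H^{-1}$ is itself symmetric positive definite.

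Next I would show that $M$ is symmetric positive definite. Symmetry is exactly assumption (A2). For positive definiteness I would invoke (H2): given $z \in \R^n \setminus \{0\}$, set $w := H^{-1}z$ (so $z = Hw$ and $w \neq 0$), and compute $z^T M z = z^T A(\bu) H^{-1} z = (Hw)^T A(\bu) w = w^T H A(\bu) w = w^T \bigl(D^2h(\bu)A(\bu)\bigr) w \geq \alpha \sum_{i=1}^n \bu_i^{2m_i-2} w_i^2 > 0$, where symmetry of $H$ is used in the third equality and the final strict inequality uses that $\bu_i > 0$ for all $i$, guaranteed by (A1). Hence $M$ is symmetric positive definite.

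Finally, with $A(\bu) = M H$ exhibited as a product of two symmetric positive definite matrices, I would conjugate by $M^{1/2}$ (which exists and is invertible since $M$ is symmetric positive definite): one has $M^{-1/2} A(\bu) M^{1/2} = M^{1/2} H M^{1/2}$, which is symmetric and positive definite, since $\xi^T M^{1/2} H M^{1/2} \xi = (M^{1/2}\xi)^T H (M^{1/2}\xi) > 0$ for $\xi \neq 0$. Thus $A(\bu)$ is similar to a symmetric positive definite matrix and is therefore diagonalizable with strictly positive real eigenvalues, which is the claim. There is no substantial obstacle here; the only points requiring care are to apply (H2) through the change of variable $w = H^{-1}z$ so as to obtain positive definiteness of $M$ itself (rather than merely of $D^2h\,A$), and to remember that it is precisely (A1) that makes the lower bound in (H2) strictly positive.
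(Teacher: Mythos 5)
Your proof is correct and follows essentially the same route as the paper: write $A(\bu)=M(\bu)\,D^2h(\bu)$ as a product of two symmetric positive definite matrices and observe that it is similar to a symmetric positive definite matrix, the only cosmetic difference being that you conjugate by $M^{1/2}$ where the paper conjugates by $H^{1/2}=D^2h(\bu)^{1/2}$. You also spell out, via (H2) and the substitution $w=H^{-1}z$, why $M(\bu)$ is positive definite, a point the paper asserts without detail.
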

\begin{proof}
    From \eqref{eq:defM}, it holds that $A(\bu)=M(\bu)H(\bu)$ with $H(\bu):=D^2h(\bu)$. The matrices $M(\bu)$ and $H(\bu)$ are both symmetric positive definite, which implies that $H(\bu)^{1/2}$ is well-defined and
    \[ A(\bu) = M(\bu)H(\bu) = H(\bu)^{-1/2}H(\bu)^{1/2}M(\bu)H(\bu)^{1/2} H(\bu)^{1/2}.\]
    Therefore $A(\bu)$ is similar to the symmetric real matrix $H(\bu)^{1/2}M(\bu)H(\bu)^{1/2}$ that is clearly positive definite. Hence the result.
\end{proof}

\medskip

The result of Lemma~1 enables us to decompose system \eqref{linearized} into $n$ scalar problems as follows. One can write $A(\bu) = Q^{-1}(\bu) \Sigma(\bu) Q(\bu)$, where the coefficients of the diagonal matrix $\Sigma(\bu)$ are the positive eigenvalues $(\sigma_1,\dots,\sigma_n)$ of $A(\bu)$. As a consequence, denoting by $z:=Q(\bu)\delta u$, by $z_i$ the $i^{th}$ component of $z$ for $1\leq i \leq n$ and by $z^0 := Q(\bu)\delta u^0$, system \eqref{linearized} boils down to the following set of $n$ uncoupled scalar equations: for all $1\leq i \leq n$,
\begin{equation}
    \left\{
    \label{linearized_decomp}
    \begin{aligned}
        \partial_t z_i - \sigma_i \partial_{xx}^2 z_i        & = 0,                  & \text{for} ~ (t,x) \in U_{\be}, \\
        \sigma_i\partial_x z_i(t,\be(t)) + \bv z_i(t,\be(t)) & = \delta \psi^{i}(t), & \text{for} ~ t \in \R_+^*,      \\
        \sigma_i(\partial_x z_i)(t,0)                        & = 0,                  & \text{for} ~ t \in \R_+^*,      \\
        z_i(0,x)                                             & = z^0_i(x),           & \text{for} ~ x \in (0,\be_0),
    \end{aligned}
    \right.
\end{equation}
where we have introduced the following change of coordinates of the feedback: for all $t\geq 0$, $\delta \psi^{i}(t) := (Q(\bu) \delta \psi(t))_i$. We are now in position to state our main results.

\begin{theorem}
    \label{thm:1}
    Let $\mu>0$. Let $(\bu,\be)$ be a target state and assume that assumptions (A1)-(A2) are satisfied. Then, $(\bu,\be)$ is $\mu$-exponentially stabilizable in $L^2$ in the sense of Definition~\ref{def:exp-stab}. More precisely, let us introduce the following functions and operators:
    \begin{itemize}
        \item for any $t \geq 0$ and $w\in \mathbb{R}$, $\Theta(t,w) = -\mu w$;
        \item for any $t \geq 0$, $1\leq i \leq n$, $z \in H^1(0,\be(t))$ and $\lambda>0$,
              \begin{align*}
                  H_{l,\lambda}^{i}(t)z & := \sigma_i k^{\sigma_i}_\lambda(\be(t),\be(t)) z(\be(t)), \\ H_{nl, \lambda}^{i}(t)z &:= \int_0^{\be(t)} \left[\sigma_i \partial_x k^{\sigma_i}_\lambda(\be(t),y) + \bv k^{\sigma_i}_\lambda(\be(t),y)\right] z(y) dy,
              \end{align*}
              where $k_\lambda^{\sigma_i}$ is the unique solution to \eqref{eq:kernel_eq} given below with $\sigma = \sigma_i$. We also define for all $t\geq 0$, $\lambda >0$ and $z:=(z_i)_{1\leq i \leq n}\in H^1(0, \overline{e}(t))^n$,
              \begin{align*}
                  H_{l,\lambda}(t)z   & := Q(\overline{u})^{-1} \left( H^{i}_{l,\lambda}(t)z_i\right)_{1\leq i \leq n},  \\
                  H_{nl, \lambda}(t)z & := Q(\overline{u})^{-1} \left( H^{i}_{nl,\lambda}(t)z_i\right)_{1\leq i \leq n}.
              \end{align*}

              Then, there exists $\lambda>0$ large enough such that $\left( \Theta , (H_{l,\lambda}(t))_{t \geq 0}, (H_{nl,\lambda}(t))_{t\geq 0}\right)$ satisfy the conditions of Definition~\ref{def:exp-stab}. 
    \end{itemize}
\end{theorem}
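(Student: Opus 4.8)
The plan is to use the decoupling \eqref{linearized_decomp} of \eqref{linearized} into $n$ independent scalar heat equations, to stabilize each one by backstepping, and to recombine through the fixed isomorphism $Q(\bu)$ supplied by Lemma~\ref{lem:diagonal}. Part~(b) of Definition~\ref{def:exp-stab} is immediate: with $\Theta(t,w)=-\mu w$ one has $\delta e'(t)=\delta\theta(t)=-\mu\,\delta e(t)$ from \eqref{eq:thickness}, hence $\delta e(t)=\delta e_0\,e^{-\mu t}$ and \eqref{eq:def-stability-thickness} holds with $C_{\bar e,\mu}=1$; this is exactly why the change of control variables of Section~\ref{sec:linearized} was introduced, since it lets us treat the thickness independently of the volume fractions. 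All the work is in part~(a).

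For each $1\le i\le n$ I would introduce the Volterra transform of the second kind
\[
  w_i(t,x):=z_i(t,x)-\int_0^x k_\lambda^{\sigma_i}(x,y)\,z_i(t,y)\,dy,
\]
where $k_\lambda^{\sigma_i}$ solves the kernel equations \eqref{eq:kernel_eq}. Since $\sigma_i$, $\bv$ and $\lambda$ are constants, this kernel is itself time-independent, defined on the fixed triangle $\{0\le y\le x\}$, and the time dependence of the transform enters only through the growing integration domain $(0,\be(t))$. The kernel equations are chosen so that, when $z_i$ solves \eqref{linearized_decomp} with the feedback $\delta\psi^i=H^i_{l,\lambda}(t)z_i+H^i_{nl,\lambda}(t)z_i$, the image $w_i$ solves a damped heat equation on $U_{\be}$ with homogeneous, dissipative boundary conditions; the precise form of $H^i_{l,\lambda}$ and $H^i_{nl,\lambda}$ in the statement is exactly what cancels the boundary contributions arising when one computes $\sigma_i\partial_x w_i+\bv w_i$ at $x=\be(t)$, which is a direct (if lengthy) differentiation using the traces $k_\lambda^{\sigma_i}(\be(t),\be(t))$ and $\partial_x k_\lambda^{\sigma_i}(\be(t),\cdot)$. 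I would then prove the exponential decay of the target system: multiplying its equation by $w_i$ and integrating on $(0,\be(t))$, the damping yields $-\lambda\|w_i\|_{L^2}^2$, but the Reynolds transport formula on the expanding domain produces an extra boundary term $\tfrac12\bv\,w_i(t,\be(t))^2\ge 0$ coming from $\be'(t)=\bv$. Establishing $\|w_i(t)\|_{L^2(0,\be(t))}\le C\,e^{-\gamma(\lambda)t}\|w_i(0)\|_{L^2}$ with $\gamma(\lambda)\to\infty$ as $\lambda\to\infty$ therefore requires that the damping and the boundary dissipation dominate this growth term, which is the analysis of the homogeneous target problem (cf. Lemma~\ref{lem:uniqueness}).

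The main obstacle is the passage from the target back to the original variable. The Volterra transform is invertible, $z_i=T_i^{-1}(t)w_i$, and the transferred estimate reads
\[
  \|z_i(t)\|_{L^2(0,\be(t))}\le \|T_i^{-1}(t)\|\,\|w_i(t)\|_{L^2(0,\be(t))}\le \|T_i^{-1}(t)\|\,\|T_i(0)\|\,C\,e^{-\gamma(\lambda)t}\,\|z_i^0\|_{L^2}.
\]
As stressed in the introduction, $T_i^{-1}(t)$ acts on the growing interval $(0,\be(t))$, so its operator norm is time-dependent and a priori grows with $\be(t)=\be_0+\bv t$. The crux is the quantitative kernel bound \eqref{eq:estimate-k}: it must control $\|T_i^{-1}(t)\|$ by an exponential $e^{c(\lambda)\be(t)}=e^{c(\lambda)\bv t}$ whose rate $c(\lambda)$ grows only sublinearly in $\lambda$ (typically like $\sqrt{\lambda}$, reflecting the Bessel structure of backstepping kernels). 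Granting this, the net exponent $\gamma(\lambda)-c(\lambda)\bv$ tends to $+\infty$ with $\lambda$, so choosing $\lambda$ large enough gives $\gamma(\lambda)-c(\lambda)\bv\ge\mu$ and hence \eqref{eq:def-stability-concentrations} after recombining the scalar components through $\delta u=Q(\bu)^{-1}z$, the constant $C_{\bar u,\mu}$ absorbing $\|Q(\bu)\|\,\|Q(\bu)^{-1}\|$. I expect this time-uniform control of the backward transform, rather than the existence of the kernel itself, to be the hardest and most novel step.

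Finally I would settle the functional-analytic requirements. Existence of an $L^2$ weak solution follows by solving the (standard) parabolic target problem on the moving domain and setting $z_i:=T_i^{-1}(t)w_i$, $\delta u:=Q(\bu)^{-1}z$; the smoothness of the Volterra kernel transfers the regularity $\delta u\in[\mathcal C^0(\R_+,L^2)^n]_{\be}$ with $\partial_t\delta u\in[L^2(\R_+;(H^1)')]_{\be}$ required by Definition~\ref{def:weak-solution}. Uniqueness follows from the uniqueness of the homogeneous target problem (Lemma~\ref{lem:uniqueness}) combined with the invertibility of the transform, as in the argument sketched for Corollary~\ref{cor:exp-stability}. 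It remains to check (P1)--(P3): property (P3) holds because $H_{l,\lambda}(t)$ is by construction the boundary evaluation $z\mapsto K_l(t)z(\be(t))$, with $K_l(t)$ built from $Q(\bu)^{\pm 1}$ and the trace $\sigma_i k_\lambda^{\sigma_i}(\be(t),\be(t))$, which is locally bounded in $t$ by \eqref{eq:estimate-k}; properties (P1) and the $L^2$-in-time bound (P2) follow from Cauchy--Schwarz applied to the integral kernel $\sigma_i\partial_x k_\lambda^{\sigma_i}(\be(t),\cdot)+\bv k_\lambda^{\sigma_i}(\be(t),\cdot)$, whose norm on $(0,\be(t))$ is again controlled by \eqref{eq:estimate-k}, yielding the constant $C(T)$ on each finite horizon.
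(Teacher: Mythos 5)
Your proposal is correct and follows essentially the same route as the paper: decoupling via $Q(\bu)$, a time-independent backstepping kernel acting on the growing interval $(0,\be(t))$, exponential decay of the damped target system (with the Reynolds term absorbed by the boundary dissipation, as in Proposition~\ref{prop:stability-target}), and a quantitative bound on the inverse transform that loses only $e^{c(\lambda)\bv t}$ with $c(\lambda)$ sublinear in $\lambda$, so that taking $\lambda$ large enough wins. The one imprecision is that you invoke \eqref{eq:estimate-k} (the bound $e^{c\be(t)\sqrt{\lambda/\sigma}}$ on the \emph{forward} kernel $k^\sigma_\lambda$) to control $\|\mathcal{T}_{\lambda,t}^{\sigma,\rm inv}\|$, whereas the relevant estimate is \eqref{eq:estimate-l} on the inverse kernel $l^\sigma_\lambda$, whose exponential rate is in fact independent of $\lambda$ (only a polynomial prefactor in $\lambda/\sigma$ appears); your weaker assumed form still suffices since $\lambda-c\sqrt{\lambda}\,\bv\to+\infty$, so the argument closes either way.
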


Elaborating on this result, we can even obtain finite time stabilization.

\begin{theorem}
    \label{thm:2}
    Let $(\bu,\be)$ be a target state and assume that assumptions (A1)-(A2) are satisfied. Then, it is stabilizable in any finite time $T>0$ in $L^2$ in the sense of Definition~\ref{def:finite-stab}.
\end{theorem}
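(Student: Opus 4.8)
The plan is to build the finite-time stabilizing feedback out of the one-parameter family of exponential stabilizers provided by Theorem~\ref{thm:1}, by letting the decay rate tend to $+\infty$ as $t\to T^-$. The thickness and the volume fractions decouple exactly as in Section~\ref{sec:linearized}, so I would treat them separately. For the thickness, finite-time stabilization is immediate with an explicit time-dependent feedback: since $\delta e$ solves the scalar equation \eqref{eq:thickness} driven by $\delta\theta(t)=\Theta(t,\delta e(t))$, taking $\Theta(t,w):=-\frac{c}{T-t}w$ for some $c>0$ yields $\delta e(t)=\delta e_0\,(1-t/T)^{c}$, which is monotone in modulus; hence $|\delta e(t)|\leq|\delta e_0|$, giving the stability estimate \eqref{def-finite-stability-thickness} with $\nu_e=\epsilon$, and $\delta e(t)\to 0$ as $t\to T^-$, giving \eqref{T-stability-e}. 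One checks $\Theta\in L^1_{\rm loc}((0,T);\cC^0(\R))$ since $t\mapsto(T-t)^{-1}$ is integrable on every compact subinterval of $(0,T)$.

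The core is the treatment of the volume fractions. After diagonalization (Lemma~\ref{lem:diagonal}) it suffices to stabilize each scalar system \eqref{linearized_decomp} in finite time, which I would do by a piecewise-in-time loop argument. Fix a partition $0=T_0<T_1<\cdots$ with $T_k\uparrow T$ and interval lengths $\tau_k:=T_{k+1}-T_k$ summing to $T$, together with rates $\mu_k\uparrow+\infty$. On each interval $[T_k,T_{k+1})$ I would run the feedback of Theorem~\ref{thm:1} associated with rate $\mu_k$ (equivalently, with backstepping parameter $\lambda_k$ large enough), restarting from the current state $\delta u(T_k)$. The decay estimate \eqref{eq:def-stability-concentrations} then gives
\[
  \|\delta u(T_{k+1})\|_{L^2(0,\be(T_{k+1}))} \leq C_{\mu_k}\,e^{-\mu_k \tau_k}\,\|\delta u(T_k)\|_{L^2(0,\be(T_k))},
\]
whence, telescoping, $\|\delta u(T_N)\|_{L^2(0,\be(T_N))}\leq\big(\prod_{k<N}C_{\mu_k}e^{-\mu_k\tau_k}\big)\|\delta u^0\|_{L^2(0,\be_0)}$.

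Everything hinges on controlling the growth of the constant $C_{\mu_k}$, which is governed by the norms of the backstepping transform and of its inverse, i.e.\ by the kernel estimate \eqref{eq:estimate-k}. Assuming, as that estimate provides, a sub-exponential bound of the form $C_\mu\leq C\,e^{c\sqrt\mu}$, I would choose geometrically shrinking intervals $\tau_k:=T/2^{k+1}$ and rapidly growing rates $\mu_k:=8^k$; then $\mu_k\tau_k-c\sqrt{\mu_k}\to+\infty$, so each factor $C_{\mu_k}e^{-\mu_k\tau_k}\to 0$ and the telescoped product decays super-geometrically. This forces $\|\delta u(T_N)\|\to 0$; since within each interval $\|\delta u(t)\|\leq C_{\mu_k}\|\delta u(T_k)\|$ and the product decays much faster than $C_{\mu_k}$ grows, the whole trajectory tends to $0$ as $t\to T^-$, yielding the convergence \eqref{T-stability-u}. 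The same fast decay makes $\sup_k C_{\mu_k}\|\delta u(T_k)\|$ as small as desired once $\|\delta u^0\|$ is small, giving the stability estimate \eqref{eq:def-finite-stability-concentrations}. The concatenated feedback inherits properties (P1)-(P3) on each interval from Theorem~\ref{thm:1}, with $K_l$ locally bounded on the open interval $(0,T)$, only the accumulation point $t=T$ being excluded.

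The main obstacle I anticipate is precisely the quantitative estimate \eqref{eq:estimate-k}: one must establish that the norm of the backward transform grows only sub-exponentially in the rate, \emph{uniformly with respect to the time-dependent domain} $(0,\be(t))$, since $\be(t)=\be_0+t\bv$ enters the kernel equations and, in the upper blocks, is evaluated for $t$ close to $T$. A secondary and more technical point is the gluing: one must verify that the concatenated piecewise feedback produces a genuine $L^2$-weak solution on $[0,T)$ in the sense of Definition~\ref{def:weak-solution}, and that uniqueness propagates across the interfaces $T_k$, so that the single-interval estimates can legitimately be chained.
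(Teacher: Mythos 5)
Your proposal is correct and follows essentially the same strategy as the paper: concatenate the exponential stabilizers of Theorem~\ref{thm:1} on a sequence of intervals accumulating at $T$ with rates tending to $+\infty$, and use the sub-exponential kernel bound \eqref{eq:estimate-k} (through the restart estimate \eqref{eq:stability} of Corollary~\ref{cor:exp-stability}) to show that the telescoped product of constants still tends to zero. The only differences are cosmetic choices: the paper takes $t_m=T-1/m^2$, $\lambda_m=\gamma^2(m+1)^8$ and a piecewise-constant thickness feedback, whereas you take dyadic intervals with $\mu_k=8^k$ and the continuous feedback $-c(T-t)^{-1}w$; both satisfy the sufficient conditions $(t_{m+1}-t_m)\sqrt{\lambda_m}\geq\gamma$ and $s_m/m\to\infty$ that the paper isolates in Lemma~\ref{prop:finite}.
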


\medskip

In Appendix \ref{sec:lyapunov} we show why the common approach which consists in directly using a basic quadratic Lyapunov function would fail to show the exponential stability in this case. This motivates our use of the backstepping approach, described in Section~\ref{sec:backstepping}.

\section{Backstepping approach}\label{sec:backstepping}

The proof of Theorem~\ref{thm:1} and Theorem~\ref{thm:2} relies on the use of a backstepping transformation, in conjunction with the fact that system~\eqref{linearized} can be decomposed into $n$ scalar uncoupled problems of the form \eqref{linearized_decomp}. Thus, we will need to collect intermediate results on the resulting scalar equations, which is the object of the present section.

From now on, let $\tau_1\geq 0$ and let us denote by $U_{\overline{e},\tau_1}:= \bigcup_{t\in (\tau_1, +\infty)} \{t\} \times (0, \be(\tau_1))$, where $\be$ is defined in \eqref{eq:stationary-domain}. Note that $U_{\overline{e},0} = U_{\overline{e}}$. Let us now fix $\sigma, \lambda>0$ and consider the following auxiliary scalar problem:
\begin{equation}
    \left\{
    \label{linearized_scal}
    \begin{aligned}
        \partial_t \zeta_\lambda^\sigma - \sigma \partial_{xx}^2 \zeta_\lambda^\sigma         & = 0,                               & \text{for} ~ (t,x) \in U_{\overline{e},\tau_1}, \\
        \sigma \partial_x \zeta_\lambda^\sigma(t,\be(t)) + \bv \zeta_\lambda^\sigma(t,\be(t)) & = \delta \psi_\lambda^{\sigma}(t), & \text{for} ~ t \in (\tau_1, +\infty),           \\
        \sigma \partial_x \zeta_\lambda^\sigma(t,0)                                           & = 0,                               & \text{for} ~ t \in (\tau_1, +\infty),           \\
        \zeta_\lambda^\sigma(\tau_1,x)                                                        & = \zeta^{\sigma, \tau_1}(x),       & \text{for} ~ x \in (0,\be(\tau_1)),
    \end{aligned}
    \right.
\end{equation}
for some $\zeta_\lambda^{\sigma, \tau_1}\in L^2(0, \overline{e}(\tau_1))$ and where $\delta \psi_\lambda^\sigma$ will be defined later in \eqref{eq:feedback}. In particular,
the solution $\zeta_\lambda^\sigma$ to \eqref{linearized_scal} with $\tau_1 = 0$, $\sigma = \sigma_i$, $\delta \psi_\lambda^\sigma = \delta \psi^i$ and $\zeta_\lambda^{\sigma,\tau_1} = z_i^0$ can be identified with $z_i$ the solution to \eqref{linearized_decomp}.

\subsection{Backstepping transformation}

In a nutshell, the general idea of backstepping is to map the original problem \eqref{linearized_scal} to a \emph{target problem} for which exponential or finite-time stability can be proven more easily, and to get the stability result using the reverse transformation. The backstepping approach usually consists in using a ``spatially-causal'' kernel transformation $\mathcal{T}_\lambda^\sigma$, that reads, for any $(t,x)\in U_{\overline{e},\tau_1}$:
\begin{equation}
    \label{eq:map}
    g_\lambda^\sigma(t,x) := \left(\mathcal T_{\lambda}^\sigma \zeta_\lambda^\sigma\right)(t,x),
\end{equation}
where for all $t\geq 0$, all $\xi \in L^2(0, \overline{e}(t))$,
\begin{equation}
    (\mathcal{T}_{\lambda}^\sigma \zeta_\lambda^\sigma)(t,x)=(\mathcal{T}_{\lambda,t}^\sigma \zeta_\lambda^\sigma(t))(x),
\end{equation}
and $\mathcal{T}_{\lambda,t}^\sigma$ is a Volterra transform of the second kind from $L^{2}(0,e(t))$ to itself
\begin{equation}\label{eq:defT}
    \forall x\in (0, \overline{e}(t)), \quad (\mathcal{T}^\sigma_{\lambda,t}\xi)(x) :=  \xi(x) - \int_0^x k^\sigma_\lambda(t;x,y) \xi(y) dy,
\end{equation}
where $k^\sigma_\lambda$ is the solution to the kernel problem \eqref{eq:kernel_eq} which will be introduced below and is a real-valued function defined in the triangular domain
\begin{equation}
    \label{eq:Dt}
    D_{t}:= \left\{ (x,y)\in \left(\mathbb{R}_+\right)^2, \quad  0 < y \leq x < \overline{e}(t)\right\}.
\end{equation}
One of the expected difficulty is that the domain of the problem depends on time and therefore $\mathcal{T}$ and the kernel $k_{\lambda}^{\sigma}$ \emph{a priori} depend on the time $t$. However, an interesting feature of our problem, that we comment about below in Section~\ref{sec:kernels} and Appendix~\ref{app:derivation}, is that the kernel $k_\lambda^\sigma$ actually does not depend on the time $t$ in the sense that it can be chosen as the restriction to $D_t$ of a time-independent function kernel $k^{\sigma,\infty}_\lambda$ defined in a domain
\begin{equation}
    \label{eq:D_infty}
    D_\infty:= \left\{ (x,y)\in \left(\mathbb{R}_+\right)^2, \quad  0 < y \leq x \right\},
\end{equation}
   
namely $k_\lambda^\sigma(t)=k^{\sigma,\infty}_\lambda|_{D_{t}}$ for any $t\geq 0$. Naturally, it holds that for all $0\leq t\leq t'$, $D_t \subset D_{t'} \subset D_\infty$. To alleviate the notations, in the following we will use a slight abuse of notation and denote $k^{\sigma,\infty}_\lambda$ by $k_\lambda^\sigma$.

\medskip

Consequently, we have:
$$
    \forall (t,x)\in U_{\overline{e}}, \quad (\mathcal{T}^\sigma_\lambda w)(t,x) :=  \left( \mathcal T^\sigma_{\lambda,t}w(t)\right)(x) = w(t,x) - \int_0^x k^\sigma_\lambda(x,y) w(t,y) dy,
$$
for any $w \in \left[\mathcal C^0([0,+\infty), L^{2}(0,1))\right]_{\be}$.

\medskip

The main advantage of the transformation $\mathcal T_{\lambda,t}^\sigma$ is that, thanks to the triangular structure, it is always invertible provided that $k_\lambda^\sigma|_{D_t} \in L^2(D_t)$ for all $0\leq t \leq T$ (see Lemma~\ref{lem:inv-map} below). The inverse transformation has then the same form and writes as follows (see Lemma \ref{lem:inv-map}): for any $w \in \left[\mathcal C^0([0,+\infty), L^{2}(0,1))\right]_{\be}$, let us denote by
\begin{equation}\label{eq:inv_relationship}
    \forall (t,x)\in U_{\overline{e}}, \quad  (\mathcal{T}_{\lambda}^{\sigma, {\rm inv}}w)(t,x) := (\mathcal{T}_{\lambda,t}^{\sigma, {\rm inv}}w(t))(x)  = w(t,x) + \int_0^x l^\sigma_{\lambda}(x,y)w(t,y)dy.
\end{equation}
where for all $t\geq 0$ and  all $\xi \in L^2(0, \overline{e}(t))$,
\begin{equation}\label{eq:defTinv}
    \forall x\in (0,\overline{e}(t)), \quad  \left(\mathcal{T}_{\lambda,t}^{\sigma, {\rm inv}}\xi\right)(x) = \xi(x) + \int_0^x l^\sigma_{\lambda}(x,y)\xi(y)dy,
\end{equation}
with $l_\lambda^\sigma$ solution to the inverse kernel problem \eqref{eq:kernel_inv_eq} below. Similarly to $k^\sigma_\lambda$, $l^\sigma_\lambda$ is expected to depend on $t$ but can be chosen as the restriction to $D_t$ of a fixed kernel $l^{\sigma,\infty}_\lambda$ defined in $D_\infty$. In the following we use again the same slight abuse of notation and denote $l^{\sigma,\infty}_\lambda$ by $l^{\sigma}_\lambda$.

We will then see that the following identity holds: for any $t\geq \tau_1$ and $x \in (0,\be(t))$
\begin{equation}
    \label{eq:invmap}
    \zeta_\lambda^\sigma(t,x) = (\mathcal{T}_\lambda^{\sigma, {\rm inv}}g_\lambda^\sigma)(t,x) = \left(\mathcal{T}^{\sigma, {\rm inv}}_{\lambda,t} g_\lambda^\sigma(t)\right)(x),
\end{equation}

Formally, the strategy to identify the set of equations satisfied by $k_\lambda^\sigma$ and $l_\lambda^\sigma$ is to differentiate \eqref{eq:map} in time in space and to write that $\zeta_\lambda^\sigma$ and $g^\sigma_{\lambda}$ must satisfy respectively the initial problem \eqref{linearized_scal} and the target problem \eqref{target} in order to obtain a set of necessary conditions on the kernels $k^\sigma_{\lambda}$ and $l^\sigma_{\lambda}$ (see \eqref{eq:kernel_eq}-\eqref{eq:kernel_inv_eq} below).

\subsection{Target problem}\label{sec:target}

We consider the following target problem:
\begin{equation}
    \label{target}
    \left\{
    \begin{aligned}
        \partial_t g^\sigma_{\lambda} - \sigma \partial_{xx}^2 g^\sigma_{\lambda} + \lambda g^\sigma_{\lambda} & = 0, ~ \text{for} ~ (t,x) \in U_{\be,\tau_1},                         \\
        \sigma \partial_x g^\sigma_{\lambda}(t,\be(t)) +  \bv g^\sigma_{\lambda}(t,\be(t))                     & = 0, ~ \text{for} ~ t \in (\tau_1, +\infty),                          \\
        \sigma \partial_x g^\sigma_{\lambda}(t,0)                                                              & = 0, ~ \text{for} ~ t \in (\tau_1, +\infty),                          \\
        g^\sigma_\lambda(\tau_1,x)                                                                             & = g^{\sigma,\tau_1}_\lambda(x), ~ \text{for} ~ x \in (0,\be(\tau_1)),
    \end{aligned}
    \right.
\end{equation}
that is similar to the original problem \eqref{linearized_scal} but with homogeneous boundary conditions, an additional \emph{damping term} $\lambda g_\lambda^\sigma$ and an initial condition $g_\lambda^{\sigma,\tau_1}\in L^2(0, \be(\tau_1))$.

\medskip

We introduce here a notion of weak $L^2$ solution to~\eqref{target}. To this aim, we introduce the set $D^{\rm targ}$ of test functions $v: U_{\overline{e}, \tau_1} \to \mathbb{R}$ satisfying:
\begin{itemize}
    \label{defDstar-target}
    \item[(i)] $v \in \left[\left(L^2\left((\tau_1,T); H^2\right)\right)\right]_{\overline{e}}\cap \left[\mathcal C^0([\tau_1,T],L^{2})\right]_{\overline{e}}$,
    \item[(ii)] $\partial_t v\in \left[\left(L^2\left((\tau_1,T); L^2\right)\right)\right]_{\overline{e}}$,
    \item[(iii)] $v(T,\cdot) = 0$,
    \item[(iv)] $\sigma \partial_{x}v(t,0) = 0, ~ \mbox{for almost all } t \in (\tau_1,T)$,
    \item[(v)] $\sigma \partial_{x}v(t,\be(t)) = 0, \;  \mbox{for almost all } t \in (\tau_1,T)$.
\end{itemize}

\begin{definition}\label{def:weak-target}
    Let $g_\lambda^{\sigma, \tau_1} \in L^2(0, \overline{e}(\tau_1))$.
    A function $g^\sigma_{\lambda} \in \left[\mathcal C^0([\tau_1, +\infty),L^{2})\right]_{\overline{e}}$ such that $\partial_t g_\lambda^\sigma \in \left[L^2((\tau_1,+\infty);(H^1)')\right]_{\be}$ is said to be a $L^2$-weak solution of \eqref{target} if, for any $T > \tau_1$, it satisfies:

    \begin{equation}
        \label{eq:defatar}
        a^{\rm targ}(g_\lambda^\sigma,v) := \int_{\tau_1}^{T}\int_{0}^{\be(t)} g^\sigma_{\lambda}(t,x) \left[\partial_{t}v(t,x)+\sigma \partial_{xx}^{2}v(t,x) - \lambda v(t,x) \right] dxdt  + \int_{0}^{\be(\tau_1)}g^{\sigma,\tau_1}_{\lambda}(x)  v(\tau_1,x) dx  = 0,
    \end{equation}

    for any test function $v\in D^{\rm targ}$.
\end{definition}

Problem~\eqref{target} is actually exponentially stable with decay rate $\lambda$, that can be chosen arbitrarily large here (see Appendix~\ref{app:target}):
\begin{proposition}[Well-posedness and exponential stability of the target equation]
    \label{prop:stability-target}
    Let $\tau_1\geq 0$, $\sigma, \lambda>0$ and $g_\lambda^{\sigma,\tau_1}\in L^{2}(0,\be(\tau_1))$. Then, there exists a unique weak $L^2$ solution $g^\sigma_{\lambda}\in \mathcal C^0([\tau_1, +\infty), L^{2}(0,\be(t)))$ to \eqref{target} in the sense of Definition~\ref{def:weak-target}, and it holds that, for any $t \geq \tau_1$:
    \begin{equation}
        \label{eq:stabtarget}
        \|g_{\lambda}^\sigma(t)\|_{L^{2}(0,\be(t))} \leq e^{-\lambda (t-\tau_1)}\|g_\lambda^{\sigma,\tau_1}\|_{L^{2}(0,\be(\tau_1))}.
    \end{equation}
\end{proposition}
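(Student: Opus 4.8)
The plan is to prove existence, uniqueness and the decay estimate \eqref{eq:stabtarget} for the target problem \eqref{target} by a Galerkin approximation together with a weighted energy estimate. Since \eqref{target} is a scalar linear parabolic equation with homogeneous Neumann-type boundary conditions on a growing domain, it is natural to first eliminate the moving boundary. The key idea is to rescale space: introduce the fixed-domain variable $y = x/\be(t) \in (0,1)$ and set $G(t,y) := g^\sigma_\lambda(t, \be(t)y)$. Because $\be(t) = \be_0 + t\bv$ is affine with $\be'(t) = \bv$, the change of variables produces a parabolic equation on the fixed cylinder $(\tau_1,+\infty)\times(0,1)$ with time-dependent but smooth and bounded coefficients, of the form $\partial_t G - \frac{\sigma}{\be(t)^2}\partial_{yy}^2 G - \frac{\bv y}{\be(t)}\partial_y G + \lambda G = 0$, with the boundary conditions becoming $\partial_y G(t,0)=0$ and $\sigma \partial_y G(t,1) + \bv \be(t) G(t,1) = 0$. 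On this fixed domain the standard theory of parabolic equations with time-dependent coefficients applies, and I would construct a weak solution by Galerkin approximation in a basis of $H^1(0,1)$, passing to the limit using the uniform energy bounds derived below.

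For the energy estimate, I would test \eqref{target} against $g^\sigma_\lambda$ itself (rigorously, this is done at the level of the Galerkin approximants, or justified by the regularity $\partial_t g^\sigma_\lambda \in [L^2((\tau_1,+\infty);(H^1)')]_{\be}$ which makes $t \mapsto \|g^\sigma_\lambda(t)\|_{L^2}^2$ absolutely continuous). The subtle point is that the domain of integration depends on $t$, so differentiating $\frac{1}{2}\|g^\sigma_\lambda(t)\|_{L^2(0,\be(t))}^2$ in time produces a Reynolds-transport boundary contribution $\frac{\bv}{2} g^\sigma_\lambda(t,\be(t))^2$ coming from the moving endpoint. Integrating the diffusion term by parts and using the two boundary conditions of \eqref{target}, I expect to obtain
\begin{equation}
    \frac{1}{2}\frac{d}{dt}\|g^\sigma_\lambda(t)\|_{L^2(0,\be(t))}^2 = -\sigma\int_0^{\be(t)} |\partial_x g^\sigma_\lambda(t,x)|^2\,dx - \lambda \|g^\sigma_\lambda(t)\|_{L^2(0,\be(t))}^2 - \frac{\bv}{2} g^\sigma_\lambda(t,\be(t))^2.
\end{equation}
Here the flux boundary condition $\sigma\partial_x g^\sigma_\lambda(t,\be(t)) = -\bv g^\sigma_\lambda(t,\be(t))$ contributes a term $-\bv\, g^\sigma_\lambda(t,\be(t))^2$ which, combined with the transport term $+\frac{\bv}{2}g^\sigma_\lambda(t,\be(t))^2$, leaves a net $-\frac{\bv}{2}g^\sigma_\lambda(t,\be(t))^2 \leq 0$; the Neumann condition at $x=0$ kills the lower boundary term. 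Since $\sigma,\bv>0$, the first and third terms on the right-hand side are nonpositive, so $\frac{d}{dt}\|g^\sigma_\lambda(t)\|_{L^2}^2 \leq -2\lambda\|g^\sigma_\lambda(t)\|_{L^2}^2$, and Grönwall's inequality yields exactly \eqref{eq:stabtarget}.

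Uniqueness follows from the same energy identity applied to the difference of two solutions with $g_\lambda^{\sigma,\tau_1}=0$: the estimate forces the difference to vanish for all $t\geq\tau_1$. I would likely invoke the homogeneous-problem uniqueness result (Lemma~\ref{lem:uniqueness} in Appendix~\ref{app:target}, cited in Remark~\ref{rq:time-derivative}) to handle this cleanly within the weak formulation, since the test-function class in Definition~\ref{def:weak-target} is dual and one cannot directly take $v = g^\sigma_\lambda$. The main obstacle I anticipate is precisely this rigorous justification of the energy estimate within the weak $L^2$ framework: the natural test function $g^\sigma_\lambda$ does not a priori belong to the admissible class $D^{\rm targ}$ (which requires $H^2$ regularity in space and the homogeneous boundary conditions built in), so the formal computation above must be legitimized either through the Galerkin approximation on the rescaled fixed domain, where testing against the approximant is valid, or by a density/regularization argument exploiting the assumed regularity $\partial_t g^\sigma_\lambda \in [L^2;(H^1)']_{\be}$. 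Correctly accounting for the moving-boundary transport term and verifying that the boundary terms combine with the favourable sign is the crux; everything else is routine parabolic theory.
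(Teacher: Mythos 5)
Your proposal is correct and follows essentially the same route as the paper: rescaling to the fixed domain $(0,1)$, a Galerkin construction of an energy solution, an $L^2$ energy identity exploiting the favourable sign of the boundary and damping terms, and a duality argument (the paper's Lemma~\ref{lem:uniqueness}) to obtain uniqueness in the weak $L^2$ class where one cannot test against the solution itself. The only (minor) difference is that you perform the energy computation directly on the moving domain, where the Reynolds transport term $+\frac{\bv}{2}g^\sigma_\lambda(t,\be(t))^2$ and the flux condition combine at once to give \eqref{eq:stabtarget}, whereas the paper works in the rescaled variable and must retain the compensating damping term $\frac{\bv}{2\be(t)}\int_0^1 w^2$ in the Gr\"onwall step to recover the same constant-free estimate after undoing the Jacobian factor.
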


\subsection{Expression of the feedback and weak solution}
\label{sec:feedback}
Let us first explain here how we can derive an expression for the feedback control $\delta \psi_\lambda^\sigma$. Assume for now $k^\sigma_\lambda$ and $\zeta_\lambda^\sigma$ are smooth and differentiate \eqref{eq:map} with respect to $x$ at $x=\be(t)$. One finds:
\[ \partial_x g^\sigma_{\lambda}(t,\be(t)) = \partial_x \zeta_\lambda^\sigma(t,\be(t)) - k^\sigma_{\lambda}(\be(t),\be(t))\zeta_\lambda^\sigma(t,\be(t)) - \int_0^{\be(t)} \partial_x k^\sigma_{\lambda}(\be(t),y) \zeta_\lambda^\sigma(t,y) dy.   \]
Then, combining with \eqref{eq:map} and considering the second equation of \eqref{target}, namely $\sigma \partial_x g^\sigma_{\lambda}(t,\be(t)) +  \bv g^\sigma_{\lambda}(t,\be(t)) = 0$ and the boundary condition at $x=\be(t)$ in \eqref{linearized_scal}, one must impose the following expression of the feedback, which depends on the kernel $k^\sigma_{\lambda}$: for all $t\geq \tau_1$,

\begin{equation}
    \label{eq:feedback}
    \delta \psi_\lambda^{\sigma}(t) := \sigma k^\sigma_{\lambda}(\be(t),\be(t)) \zeta_\lambda^\sigma(t,\be(t)) + \int_0^{\be(t)} \left[\sigma \partial_x k_{\lambda}(\be(t),y) + \bv k_{\lambda}(\be(t),y)\right] \zeta_\lambda^\sigma(t,y) dy.
\end{equation}

Let us already remark that this feedback is of the form
\begin{equation}\label{eq:feedback-decomp}
    \delta \psi_\lambda^{\sigma}(t) =  H_{l,\lambda}^{\sigma}(t)\zeta^\sigma_\lambda(t) + H_{nl,\lambda}^{\sigma}(t) \zeta_\lambda^\sigma(t),
\end{equation}
where, for any $t \geq 0$ and $\xi \in H^1(0,\be(t))$, the operators are given by
\begin{equation}
    \label{eq:operators}
    \begin{aligned}
        H_{nl,\lambda}^{\sigma}(t)\xi & = \int_0^{\be(t)} \left[\sigma \partial_x k^\sigma_{\lambda}(\be(t),y) + \bv k^\sigma_{\lambda}(\be(t),y)\right] \xi(y) dy, \\
        H_{l,\lambda}^{\sigma}(t) \xi & = \sigma k^\sigma_{\lambda}(\be(t),\be(t)) \xi(\be(t)).
    \end{aligned}
\end{equation}

Assuming now that the feedback is of the form \eqref{eq:feedback-decomp}, we can give a rigorous definition of weak-$L^2$ solutions to problem \eqref{linearized_scal} provided that the family of operators $(H_{nl,\lambda}^{\sigma}(t))_{t \geq 0}$ and $(H_{l,\lambda}^{\sigma}(t))_{t \geq 0}$ satisfy properties (P1')-(P2')-(P3') below, which are scalar versions of properties (P1)-(P2)-(P3).

\medskip

\bfseries Scalar properties of operators:\normalfont

\begin{itemize}\label{properties-operator-scalar}
    \item [(P1')] for almost all $t \geq 0$, $H_{nl,\lambda}^\sigma(t)$ is a continuous linear application from $L^2(0,\overline{e}(t))$ to $\mathbb{R}$;
    \item [(P2')] for all $T>0$, and all $z \in \left[L^2((0,T), L^2)\right]_{\be}$, the application $(0,T)\ni t \mapsto H_{nl,\lambda}^\sigma(t)z(t)$ belongs to $L^2(0,T)$. Moreover, there exists a constant $C=C(T,\sigma,\lambda)>0$ such that
          $$
              \left\|H_{nl,\lambda}^\sigma(\cdot) z(\cdot)\right\|_{\left[L^2((0,T), L^2)\right]_{\be}} \leq C \| z\|_{ \left[L^2((0,T), L^2)\right]_{\be}};
          $$

    \item [(P3')] for almost all $t \geq 0$, the operator $H_{l,\lambda}^\sigma(t): H^1(0, \overline{e}(t)) \to \mathbb{R}$ is defined as follows:
          \begin{equation}\label{eq:Hl-scal}
              \forall z\in  H^1(0, \overline{e}(t)),  \quad H_{l,\lambda}^\sigma(t)z:= K_{l,\lambda}^\sigma(t)z(\overline{e}(t))
          \end{equation}
          where $K_{l,\lambda}^{\sigma} \in L_{loc}^\infty\left(\mathbb{R}_+^*\right)$.
\end{itemize}

We are then in a position to give the definition of weak-$L^2$ solutions to \eqref{linearized_scal}, by analogy with Definition \ref{def:weak-solution}. To this aim, we introduce the set $D^{\rm ini}$ of test functions $w: U_{\overline{e}, \tau_1} \to \mathbb{R}$ satisfying:
\begin{itemize}
    \label{defDpsi-scal}
    \item[(i)] $w \in \left[\left(L^2\left((\tau_1,T); H^2\right)\right)\right]_{\overline{e}}\cap \left[\cC^0([\tau_1,T],L^{2})\right]_{\overline{e}}$,
    \item[(ii)] $\partial_t w\in \left[\left(L^2\left((\tau_1,T); L^2\right)\right)\right]_{\overline{e}}$,
    \item[(iii)] $w(T,\cdot) = 0$,
    \item[(iv)] $\sigma \partial_{x}w(t,0) = 0, ~ \mbox{for almost all }t \in (\tau_1,T)$,
    \item[(v)] $K_l(t)w(t,\overline{e}(t)) - \sigma \partial_{x}w(t,\be(t)) = 0,\;  \mbox{for almost all } t \in (\tau_1,T)$.
\end{itemize}

\begin{definition}[Weak solution in $L^2$ to \eqref{linearized_scal}]
    \label{def:weak-solution3}
    Let $\zeta_\lambda^{\sigma,\tau_1}\in L^2(0, \overline{e}(\tau_1))$. Let $(H_{nl,\lambda}^\sigma(t))_{t \geq 0}$ and $(H_{l,\lambda}^\sigma(t))_{t \geq 0}$ be two families of operators satisfying (P1')-(P2')-(P3'). A function $\zeta_{\lambda}^\sigma \in \left[\cC^0([\tau_1, +\infty),L^{2})\right]_{\overline{e}}$ such that $\partial_t \zeta_\lambda^\sigma \in \left[L^2((\tau_1,+\infty);(H^1)')\right]_{\be}$ is said to be a $L^2$-weak solution to \eqref{linearized_scal} with fluxes $\delta \psi_\lambda^\sigma$ defined by \eqref{eq:feedback-decomp} if, for any $T > \tau_1$, it satisfies:

    \begin{equation}
        \label{eq:defaini}
        \begin{split}
            a^{\rm ini}(\zeta_\lambda^\sigma,w) := \int_{\tau_1}^{T}\int_{0}^{\be(t)} \zeta_\lambda^\sigma(t,x) \left[\partial_{t}w(t,x)+\sigma\partial_{xx}^{2}w(t,x) \right] dxdt \\
            + \int_{0}^{\be(\tau_1)}\zeta_\lambda^{\sigma, \tau_1}(x) w(\tau_1,x) dx
            + \int_{\tau_1}^{T}(H_{nl}(t)\zeta_\lambda^\sigma(t)) w(t,\be(t))dt = 0,
        \end{split}
    \end{equation}

    for any test function $w\in D^{\rm ini}$.
\end{definition}

\subsection{Kernel definition and properties}
\label{sec:kernels}
Now that we have an \emph{a priori} expression for the feedback \eqref{eq:feedback}, it remains to derive the full problems satisfied by the kernels $k^\sigma_{\lambda}$ and $l^\sigma_{\lambda}$. We consider the following problems (recall the definitions of the triangular domains \eqref{eq:Dt} and \eqref{eq:D_infty}):
\begin{equation}
    \label{eq:kernel_eq}
    \left\{
    \begin{aligned}
        \partial_{xx}^2 k^\sigma_{\lambda}(x,y) - \partial_{yy}^2 k^\sigma_{\lambda}(x,y) & = \frac{\lambda}{\sigma}k^\sigma_{\lambda}(x,y) & (x,y) \in D_\infty, \\
        \partial_y k^\sigma_{\lambda}(x,0)                                                & = 0                                             & x \in (0,+\infty),  \\
        k^\sigma_{\lambda}(x,x)                                                           & = -\frac{\lambda}{2 \sigma} x                   & x \in (0,+\infty),
    \end{aligned}
    \right.
\end{equation}

\begin{equation}
    \label{eq:kernel_inv_eq}
    \left\{
    \begin{aligned}
        \partial_{xx}^2 l^\sigma_{\lambda}(x,y) - \partial_{yy}^2 l^\sigma_{\lambda}(x,y) & = -\frac{\lambda}{\sigma}l^\sigma_{\lambda}(x,y) & (x,y) \in D_\infty, \\
        \partial_y l^\sigma_{\lambda}(x,0)                                                & = 0                                              & x \in (0,\infty),   \\
        l^\sigma_{\lambda}(x,x)                                                           & = - \frac{\lambda}{2 \sigma} x            & x \in (0,\infty),
    \end{aligned}
    \right.
\end{equation}
with the notation $\frac{d}{dx} f(x,x) := \partial_x f(x,x) + \partial_y f(x,x)$. It appears that the two problems are related through
\begin{equation}
    \label{eq:kernel-equiv}
    k^\sigma_\lambda = -l^\sigma_{-\lambda}.
\end{equation}
It is rigorously justified below in Lemmas~\ref{lem:map} and~\ref{lem:back-map} that these kernels indeed meet our expectations. Let us however comment here about the derivation of these kernel problems:
\begin{itemize}
    \item First, the derivation is done in Appendix~\ref{app:derivation} assuming that the kernels depend on $t$. In order to explicit the time dependence, one needs to rescale the kernel, which leads to a dynamical boundary problem set in a fixed domain (see \eqref{rescaled-kernel_eq}). Searching for solutions with separate variables, as in \eqref{separate-variables}, one finds the stationary equations \eqref{eq:kernel_eq-Dt}. 
    \item Second, one remarks that any solution to the obtained problem in $D_T$ is in fact a solution to the same problem set in $D_t$ for any $0 \leq t < T$, thanks to the structure of the boundary conditions. Therefore, it suffices to look for a solution in $D_{\infty}$, hence \eqref{eq:kernel_eq}-\eqref{eq:kernel_inv_eq}.
\end{itemize}

Thanks to the structure of the backstepping transformation, we can connect the stability of the two problems: let $g^{\sigma}_\lambda$ be the solution to \eqref{target} in the sense of Proposition~\ref{prop:stability-target}. Then, assuming that \eqref{eq:kernel_inv_eq} has a solution, the function $\zeta_\lambda^{\sigma}$ defined by \eqref{eq:invmap} can be shown to be a solution to \eqref{linearized_scal} (see Lemma~\ref{lem:back-map} below) and it holds that for all $t\geq \tau_1$,

\begin{equation}
    \label{stability_loop}
    \begin{aligned}
        \|\zeta_\lambda^\sigma(t)\|_{L^2(0,\be(t))} & \leq \left(1 + \|l^\sigma_{\lambda}\|_{L^2(D_t)} \right) \|g^\sigma_{\lambda}(t)\|_{L^2(0,\be(t))} \leq \left(1 + \|l^\sigma_{\lambda}\|_{L^2(D_t)}\right) e^{-\lambda (t-\tau_1)} \|g_\lambda^{\sigma, \tau_1}\|_{L^2(0,\be(\tau_1))}, \\
                                                    & \leq \left(1 + \|l^\sigma_{\lambda}\|_{L^2(D_t)}\right) \left(1 + \|k^\sigma_{\lambda}\|_{L^2(D_{\tau_1})} \right) e^{-\lambda (t-\tau_1)} \|\zeta^{\sigma,\tau_1}\|_{L^2(0,\be(\tau_1))}.
    \end{aligned}
\end{equation}
Hence, to get the desired stability, the remaining key point of the analysis is the control of $\|l^\sigma_{\lambda}\|_{L^2(D_t)}$ with respect to time. For this, we study the following problem, for $\alpha \in \R$,
    \begin{equation}
        \label{eq:kpde}
        \left\{
        \begin{aligned}
            \partial_{xx}^2 k^\alpha(x,y) - \partial_{yy}^2 k^\alpha(x,y) & = \alpha k^{\alpha}(x,y) & (x,y) \in D_\infty, \\
            \partial_y k^{\alpha}(x,0)                                    & = 0                      & x \in (0,\infty),   \\
            k^\alpha(x,x)                                                 & = -\frac{\alpha}{2} x    & x \in (0,\infty),   \\
        \end{aligned}
        \right.
    \end{equation}
of which \eqref{eq:kernel_eq} and \eqref{eq:kernel_inv_eq} are instances. We consider the following definition of weak solution to \eqref{eq:kpde}:
    \begin{definition}\label{def:defkernel}
        A function $k^\alpha: D_\infty \to \mathbb{R}$ is said to be a weak solution to \eqref{eq:kpde} if and only if the two following conditions are satisfied:
        \begin{itemize}
            \item [(i)]
                  the function $\overline{k}^\alpha: (0, +\infty)^2 \to \mathbb{R}$ defined such that
                  $$
                      \overline{k}^\alpha(x,y):= \left\{
                      \begin{array}{ll}
                          k^\alpha(x,y)       & \mbox{ if } (x,y)\in D_\infty:= \{0 < y \leq x < \infty \}, \\
                          - \frac{\alpha}{2}x & \mbox{otherwise}                                            \\
                      \end{array}\right.
                  $$
                  is such that, for any $L>0$, the function $\overline{k}^\alpha_L:= \overline{k}^\alpha|_{[0,L]^2}$ is such that $\overline{k}^\alpha_L \in \mathcal C^0([0,L], H^1(0,L))$, $\partial_x \overline{k}^\alpha_L \in \mathcal C^0([0,L], L^2(0,L))$ and $\partial_{xx} \overline{k}^\alpha_L \in \mathcal C^0([0,L], H^1(0,L)')$;
            \item[(ii)] for all $L>0$ and for all $v,w\in H^1(0,L)$,
                  \begin{align*}
                       & - \int_0^L \left(\int_0^x \partial_{x} k^\alpha(x,y)v(y)\,dy \right) \partial_x w(x)\,dx  + w(L)\int_0^L \partial_x k^\alpha(L,y)v(y)\,dy \\
                       & + \int_0^L \int_0^x \partial_y k^\alpha(x,y) \partial_y v(y)\,dy  w(x)\,dx                                                                \\
                       & = \alpha
                      \int_0^L \left(\int_0^x k^\alpha(x,y) v(y)\,dy\right) w(x)\,dx                                                                               \\
                       & + \frac{\alpha}{2}\int_0^L  v(x) w(x)\,dx.                                                                                                \\
                  \end{align*}
        \end{itemize}
    \end{definition}

Well-posedness and estimates for \eqref{eq:kernel_eq} and \eqref{eq:kernel_inv_eq} are achieved in the following proposition, which is proven in Section \ref{proof:prop3}.

\medskip

\begin{proposition}
    \label{prop:estimates}
    Let $\sigma> 0$. For any $\lambda \geq 0$, there exists a unique weak solution $k^\sigma_\lambda$ (resp. $l^\sigma_\lambda$) (in the sense of Definition~\ref{def:defkernel}) to the kernel problem \eqref{eq:kernel_eq} (resp. \eqref{eq:kernel_inv_eq}). Moreover, there exist $\lambda_\sigma >0$ and constants $C,c>0$ independent of $\sigma$ such that, for any $\lambda \geq \lambda_\sigma$, $t \geq 0$ and $x \in (0,\be(t))$:
    \begin{equation}
        \label{k-estimate}
        \int_0^x \left(|k^\sigma_{\lambda}(x,y)|^2 + |\nabla k^\sigma_{\lambda}(x,y)|^2 \right)\,dy \leq C e^{c \be(t)\sqrt{\lambda/\sigma}},
    \end{equation}
    \begin{equation}
        \label{l-estimate}
        \int_0^x \left(|l^\sigma_{\lambda}(x,y)|^2 + |\nabla l^\sigma_{\lambda}(x,y)|^2 \right)\,dy \leq C \left(\frac{\lambda}{\sigma}\right)^4 e^{c \be(t)}.
    \end{equation}
\end{proposition}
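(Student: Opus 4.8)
The plan is to reduce both claims to the single scalar problem \eqref{eq:kpde}. Indeed $k^\sigma_\lambda$ is the weak solution of \eqref{eq:kpde} for $\alpha=\lambda/\sigma\geq 0$, while by the relation \eqref{eq:kernel-equiv} one has $l^\sigma_\lambda=-k^\sigma_{-\lambda}$, so that $l^\sigma_\lambda$ coincides, up to an irrelevant global sign that does not affect any $L^2$ norm, with the weak solution of \eqref{eq:kpde} for $\alpha=-\lambda/\sigma\leq 0$. It therefore suffices to prove, for every $\alpha\in\R$, existence and uniqueness of a weak solution $k^\alpha$ of \eqref{eq:kpde} in the sense of Definition~\ref{def:defkernel}, and then to bound $\int_0^x\bigl(|k^\alpha|^2+|\nabla k^\alpha|^2\bigr)\,dy$: the estimate \eqref{k-estimate} will come from the regime $\alpha>0$ and \eqref{l-estimate} from the regime $\alpha<0$, the qualitatively different bounds reflecting precisely the sign of $\alpha$.

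The first idea is to read \eqref{eq:kpde} as a wave equation in which $x$ plays the role of time and $y$ that of space, the evolution taking place on the growing interval $(0,x)$. To cope with the moving spatial domain — and with the fact that the boundary $\{y=x\}$ is characteristic for $\partial_{xx}^2-\partial_{yy}^2$ — I would work with the extended function $\bar k^\alpha$ of Definition~\ref{def:defkernel}, which is continuous across $\{y=x\}$ exactly because of the Goursat condition $k^\alpha(x,x)=-\tfrac{\alpha}{2}x$ and whose trace lies in $H^1(0,L)$ for each $x$. On the full square $(0,L)^2$ the function $\bar k^\alpha$ then solves weakly an abstract second-order Cauchy problem of the form $u''=\partial_{yy}^2u+\alpha u+f$ in the time variable $x$, with Neumann condition at $y=0$, a known forcing $f$ coming from the region $\{y>x\}$ where $\bar k^\alpha=-\tfrac{\alpha}{2}x$, and initial data $\bigl(\bar k^\alpha(0,\cdot),\partial_x\bar k^\alpha(0,\cdot)\bigr)=(0,-\tfrac{\alpha}{2})$ read off from the extension. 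Condition~(ii) of Definition~\ref{def:defkernel} is exactly the space--time weak form of this Cauchy problem once the moving upper limits are integrated by parts in $x$. Existence and uniqueness of a solution with the regularity demanded in~(i), namely $\bar k^\alpha_L\in\mathcal{C}^0([0,L],H^1(0,L))$, $\partial_x\bar k^\alpha_L\in\mathcal{C}^0([0,L],L^2(0,L))$ and $\partial_{xx}^2\bar k^\alpha_L\in\mathcal{C}^0([0,L],H^1(0,L)')$, then follow from the standard Galerkin/energy theory for the wave equation, uniqueness coming from the energy identity applied to the difference of two solutions (which has vanishing data). This deliberately bypasses the successive-approximation scheme used in the literature.

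The estimates \eqref{k-estimate}--\eqref{l-estimate} are then obtained from a single energy identity. For smooth (then, by density, weak) solutions I would set $E(x):=\tfrac12\int_0^x\bigl(|\partial_x k^\alpha|^2+|\partial_y k^\alpha|^2\bigr)\,dy-\tfrac{\alpha}{2}\int_0^x|k^\alpha|^2\,dy$, differentiate in $x$ accounting for the moving upper limit, substitute the equation and integrate by parts in $y$. The boundary term at $y=0$ vanishes by the Neumann condition, and the one at $y=x$ collapses, using $\tfrac{d}{dx}k^\alpha(x,x)=\partial_x k^\alpha(x,x)+\partial_y k^\alpha(x,x)=-\tfrac{\alpha}{2}$ and $k^\alpha(x,x)=-\tfrac{\alpha}{2}x$, to the explicit value
\begin{equation*}
E'(x)=\tfrac12\bigl(\partial_x k^\alpha(x,x)+\partial_y k^\alpha(x,x)\bigr)^2-\tfrac{\alpha}{2}\,k^\alpha(x,x)^2=\frac{\alpha^2}{8}-\frac{\alpha^3}{8}x^2,
\end{equation*}
so that $E(x)=\tfrac{\alpha^2}{8}x-\tfrac{\alpha^3}{24}x^3$ exactly (as $E(0)=0$). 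When $\alpha=-\lambda/\sigma<0$, the functional $E$ is coercive and positive, and it directly controls both $\int_0^x|\nabla k^\alpha|^2\,dy\leq 2E(x)$ and $\int_0^x|k^\alpha|^2\,dy\leq 2E(x)/|\alpha|$; bounding the resulting polynomial in $x$ by $Ce^{c\be(t)}$ for $x\leq\be(t)$ and collecting the powers of $|\alpha|=\lambda/\sigma$ (a polynomial of degree at most four once $\lambda\geq\lambda_\sigma$, e.g. $\lambda_\sigma=\sigma$, ensures $|\alpha|\geq1$) yields \eqref{l-estimate}, with $c$ independent of $\lambda$. When $\alpha=\lambda/\sigma>0$, $E$ is no longer coercive, so I would instead work with the positive energy $\tilde E(x):=E(x)+\alpha\int_0^x|k^\alpha|^2\,dy$; after a Young estimate on the cross term $2\alpha\int_0^x k^\alpha\,\partial_x k^\alpha\,dy$ its derivative obeys a differential inequality $\tilde E'(x)\leq c_0\sqrt{\alpha}\,\tilde E(x)+\tfrac{\alpha^2}{8}+\tfrac{\alpha^3}{8}x^2$, and Grönwall's lemma gives $\tilde E(x)\leq C(1+\alpha)^{2}e^{c_0\sqrt{\alpha}\,x}$, whence \eqref{k-estimate} with $x\leq\be(t)$, $\alpha=\lambda/\sigma$, after absorbing the polynomial prefactor into the exponential constant $c$.

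The main obstacle I anticipate is not the algebra of the energy identity but its rigorous justification for the weak solutions produced by the Galerkin scheme: all the surviving boundary contributions live on the characteristic diagonal $\{y=x\}$, where the traces of $\partial_x k^\alpha$ and $\partial_y k^\alpha$ are only controlled through the moving-domain regularity of~(i), so the identity for $E'(x)$ must be recovered by approximation (smooth data, or the finite-dimensional approximants) and a careful passage to the limit ensuring convergence of the moving-boundary terms. Equally delicate on the quantitative side is tracking the constants so that $C$ and $c$ are genuinely independent of $\sigma$, the only $\sigma$-dependence being the explicit one through $\alpha=\lambda/\sigma$, and — most importantly — so that the exponential rate $c$ in \eqref{l-estimate} does not depend on $\lambda$. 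This last point is the crux: it is exactly what allows $\|l^\sigma_\lambda\|_{L^2(D_t)}$ to be dominated by the decay $e^{-\lambda(t-\tau_1)}$ in \eqref{stability_loop} once $\lambda$ is chosen large enough.
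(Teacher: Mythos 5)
Your proposal is correct and shares the paper's overall skeleton for well-posedness, but it derives the quantitative estimates by a genuinely different and arguably sharper computation. Like the paper, you reduce everything to the scalar problem \eqref{eq:kpde} with $\alpha=\pm\lambda/\sigma$, read it as a wave equation with $x$ as time, and pass through the extension $\overline{k}^\alpha$ to the full square; the paper organizes this as a subtraction $K^\alpha=k^\alpha+\tfrac{\alpha}{2}x$ yielding a homogeneous-data problem with source $-\tfrac{\alpha^2}{2}x\mathds{1}_{D_L}$ (Lemma~\ref{lem:square}) plus a finite-speed-of-propagation step (Lemma~\ref{lem:localization}), whereas you keep $\overline{k}^\alpha$ itself as the unknown with nonzero initial velocity and a source supported in $\{y>x\}$ — equivalent, but note that your route still needs the localization argument (to show the Cauchy solution actually equals $-\tfrac{\alpha}{2}x$ on $\{y>x\}$, so that the restriction to $D_L$ has the correct diagonal trace), which you leave implicit. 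Where you truly diverge is in the estimates: the paper proves a general a priori bound for the inhomogeneous wave equation on the square, using the rescaling $\hat{x}=\alpha^{1/2}x$ for $\alpha>0$, and then plugs in $\|f^\alpha\|_{L^2}^2\leq(\alpha L)^4$; you instead compute an exact energy identity directly on the triangle, exploiting that the only diagonal contribution is $\tfrac12\bigl(\partial_xk^\alpha(x,x)+\partial_yk^\alpha(x,x)\bigr)^2-\tfrac{\alpha}{2}k^\alpha(x,x)^2$, i.e.\ a quantity fully determined by the Goursat data (I checked the identity $E'(x)=\tfrac{\alpha^2}{8}-\tfrac{\alpha^3}{8}x^2$: it is correct). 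This buys an explicit closed form for the energy when $\alpha<0$ (coercive case, giving \eqref{l-estimate} with a $\lambda$-independent exponential rate and in fact a slightly better polynomial prefactor than $(\lambda/\sigma)^4$) and a clean Grönwall inequality with rate $\sqrt{\alpha}$ when $\alpha>0$ (giving \eqref{k-estimate}); your choice $\lambda_\sigma=\sigma$ and the absorption of polynomial prefactors into the exponentials match the paper. The two caveats you should address to make this airtight are exactly the ones you flag: the localization step above, and the justification of the energy identity for weak solutions by smooth approximation (as the paper does in Step~2 of Lemma~\ref{lem:square}), since the individual traces $\partial_xk^\alpha(x,x)$ and $\partial_yk^\alpha(x,x)$ on the characteristic diagonal are not separately controlled — only their sum is, which is precisely why your completed square is the right object.
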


\begin{remark}
    An immediate consequence of \eqref{k-estimate}-\eqref{l-estimate} is that for any $t\geq 0$, $k_\lambda^\sigma|_{D_t}\in H^1(D_t)$, $l_\lambda^\sigma|_{D_t}\in H^1(D_t)$ and
    \begin{equation}
        \label{eq:estimate-k}
        \|k^\sigma_{\lambda}\|^2_{H^1(D_t)} \leq C e^{\tilde{c}\be(t)\sqrt{\lambda/\sigma}} ,
    \end{equation}
    \begin{equation}
        \label{eq:estimate-l}
        \|l^\sigma_{\lambda}\|^2_{H^1(D_t)} \leq C  \left(\frac{\lambda}{\sigma}\right)^2 e^{\tilde{c}\be(t)}.
    \end{equation}
\end{remark}

\begin{remark}
    It was shown in (\cite{liu2003boundary}, Lemma 3.2) that the kernel solutions obtained in Proposition~\ref{prop:estimates} are more regular, namely $C^2$. The proof is based on an integral reformulation and a series representation formula. We have chosen to adopt a weak framework here since on the one hand, it is an appropriate framework to derive estimates \eqref{eq:estimate-k}-\eqref{eq:estimate-l} and on the other hand, it shows that our strategy can be extended to equations with space-dependent coefficients \cite{coron2017}.
\end{remark}

\subsection{Main auxiliary results}
\label{sec:auxiliary}

Next, we check that the kernels defined as solutions to \eqref{eq:kernel_eq}-\eqref{eq:kernel_inv_eq} indeed map \eqref{linearized_scal} to \eqref{target} through the transformation $\mathcal{T}^\sigma_\lambda$ and the other way around. In fact, we need to check that the formal computations performed for the derivation of the kernel problems can be adapted to the case when we have to consider weak $L^2$ solutions in the sense of Definitions~\ref{def:weak-solution} and \ref{def:weak-target}.

We start by stating in Lemma~\ref{lem:inv-map} that for all $\lambda,\sigma>0$ and $t\geq 0$, the transformation $\mathcal{T}_{\lambda,t}^\sigma: L^2(0, \be(t)) \to L^2(0, \be(t))$ associated to the unique kernel solution to \eqref{eq:kernel_eq} is one-to-one, and that it can be inverted from $L^2(0,\be(t))$ to its image, with inverse given by $\mathcal{T}_{\lambda,t}^{\sigma,{\rm inv}}$. The proof of Lemma~\ref{lem:inv-map} in the case of Dirichlet boundary conditions is provided in \cite{coron2017} (Lemma 4). We omit the proof here, which is very similar. We also refer to \cite{liu2003boundary} for the invertibility of the transformation with Neumann boundary conditions (Lemma 3.3).
\begin{lemma}
    \label{lem:inv-map}
    Let $\lambda, \sigma>0$ and $t \geq 0$. Then,
    $ \mathcal T_{\lambda,t}^{\sigma, {\rm inv}} \circ \mathcal T_{\lambda,t}^{\sigma} = \mathcal T_{\lambda,t}^{\sigma} \circ \mathcal T_{\lambda,t}^{\sigma, {\rm inv}} = {\rm Id}_{L^2(0, \be(t))}$.
\end{lemma}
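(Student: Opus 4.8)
The plan is to treat $\mathcal{T}_{\lambda,t}^\sigma$ and $\mathcal{T}_{\lambda,t}^{\sigma,\mathrm{inv}}$ as Volterra operators of the second kind on the fixed interval $(0,\be(t))$ and to prove they are mutually inverse by reducing both compositions to a single reciprocity identity between the kernels $k_\lambda^\sigma$ and $l_\lambda^\sigma$. First I would record that, by Proposition~\ref{prop:estimates} and the estimates \eqref{eq:estimate-k}--\eqref{eq:estimate-l}, the restrictions $k_\lambda^\sigma|_{D_t}$ and $l_\lambda^\sigma|_{D_t}$ belong to $L^2(D_t)$. Hence the integral operators $V_k\xi(x):=\int_0^x k_\lambda^\sigma(x,y)\xi(y)\,dy$ and $V_l\xi(x):=\int_0^x l_\lambda^\sigma(x,y)\xi(y)\,dy$ are bounded (indeed Hilbert--Schmidt) on $L^2(0,\be(t))$, with $\mathcal{T}_{\lambda,t}^\sigma=\mathrm{Id}-V_k$ and $\mathcal{T}_{\lambda,t}^{\sigma,\mathrm{inv}}=\mathrm{Id}+V_l$ by \eqref{eq:defT} and \eqref{eq:defTinv}.

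Second, for invertibility I would use that the causal (lower-triangular) structure makes $V_k$ quasi-nilpotent, so that $\mathrm{Id}-V_k$ is boundedly invertible with inverse given by the convergent Neumann series $\mathrm{Id}+\sum_{n\geq1}V_k^n=\mathrm{Id}+V_r$, where the resolvent kernel $r$ again lies in $L^2(D_t)$. This already shows $\mathcal{T}_{\lambda,t}^\sigma$ is a bijection of $L^2(0,\be(t))$, and it remains only to identify its inverse with $\mathcal{T}_{\lambda,t}^{\sigma,\mathrm{inv}}$, i.e. to show $r=l_\lambda^\sigma$. To this end I would compute $(\mathrm{Id}+V_l)\circ(\mathrm{Id}-V_k)\xi$ for arbitrary $\xi\in L^2(0,\be(t))$; after a Fubini exchange over the region $0\leq y\leq z\leq x$ this composition equals $\xi$ precisely when
\[
l_\lambda^\sigma(x,y)-k_\lambda^\sigma(x,y)=\int_y^x l_\lambda^\sigma(x,z)\,k_\lambda^\sigma(z,y)\,dz,\qquad (x,y)\in D_t.
\]
This reciprocity relation is exactly the resolvent equation characterizing the kernel of $(\mathrm{Id}-V_k)^{-1}$; since the resolvent kernel is unique in $L^2(D_t)$, verifying it forces $r=l_\lambda^\sigma$ and shows $\mathcal{T}_{\lambda,t}^{\sigma,\mathrm{inv}}$ is a left inverse. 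As $\mathcal{T}_{\lambda,t}^\sigma$ is already known to be bijective, a left inverse is automatically two-sided, so the identity $\mathcal{T}_{\lambda,t}^\sigma\circ\mathcal{T}_{\lambda,t}^{\sigma,\mathrm{inv}}=\mathrm{Id}$ comes for free.

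The main obstacle is establishing the reciprocity relation itself. I would set $\Phi(x,y):=l_\lambda^\sigma(x,y)-k_\lambda^\sigma(x,y)-\int_y^x l_\lambda^\sigma(x,z)k_\lambda^\sigma(z,y)\,dz$ and show $\Phi\equiv0$ on $D_\infty$ (hence on each $D_t$). Applying $\partial_{xx}^2-\partial_{yy}^2$ and using the two kernel problems \eqref{eq:kernel_eq} and \eqref{eq:kernel_inv_eq} (with Leibniz differentiation of the variable-limit integral together with the diagonal and Neumann conditions $k_\lambda^\sigma(x,x)=l_\lambda^\sigma(x,x)=-\tfrac{\lambda}{2\sigma}x$ and $\partial_y k_\lambda^\sigma(x,0)=\partial_y l_\lambda^\sigma(x,0)=0$), one checks that $\Phi$ solves a homogeneous Goursat problem for the associated Klein--Gordon-type operator with vanishing data on the characteristic $y=x$ and on $y=0$; uniqueness for this problem then gives $\Phi\equiv0$.

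Alternatively --- and this is how \cite{coron2017} (Lemma 4) and \cite{liu2003boundary} (Lemma 3.3) proceed --- one may bypass the reciprocity computation altogether by invoking the series representation of the resolvent kernel $r$ to show directly that $r$ solves the inverse kernel problem \eqref{eq:kernel_inv_eq}, so that the uniqueness statement in Proposition~\ref{prop:estimates} yields $r=l_\lambda^\sigma$ at once. In either route, the relation \eqref{eq:kernel-equiv}, namely $k_\lambda^\sigma=-l_{-\lambda}^\sigma$, can be used to streamline the symmetric bookkeeping between the two kernel problems.
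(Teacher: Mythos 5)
The paper does not actually prove this lemma: it omits the argument as ``very similar'' to \cite{coron2017} (Lemma 4) and \cite{liu2003boundary} (Lemma 3.3). Your proof --- Hilbert--Schmidt Volterra operators on the triangle, quasi-nilpotence giving invertibility of $\mathrm{Id}-V_k$ via the Neumann series, and identification of the resolvent kernel with $l^\sigma_\lambda$ through the reciprocity relation $l^\sigma_\lambda(x,y)-k^\sigma_\lambda(x,y)=\int_y^x l^\sigma_\lambda(x,z)k^\sigma_\lambda(z,y)\,dz$, verified as a homogeneous Goursat problem --- is exactly the standard route taken in those references and is correct; the one point worth making explicit is that the Goursat computation differentiates the kernels classically, which is licensed by the $C^2$ regularity recorded in the remark after Proposition~\ref{prop:estimates} (via Lemma 3.2 of \cite{liu2003boundary}).
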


Then in Lemma~\ref{lem:map}, we check that this transformation indeed transforms \eqref{linearized_scal} into \eqref{target} when the boundary term $\delta \psi_\lambda^{\sigma}$ in \eqref{linearized_scal} is defined in \eqref{eq:feedback-decomp}-\eqref{eq:operators}.

\begin{lemma}
    \label{lem:map}
    Let $\sigma >0$, let $\lambda \geq \lambda_\sigma$ where $\lambda_\sigma$ is defined in Proposition~\ref{prop:estimates}, $\tau_1 \geq 0$ and $\zeta_\lambda^{\sigma,\tau_1}\in L^2(0, \overline{e}(\tau_1))$. Let $k^\sigma_\lambda$ be the unique weak solution to \eqref{eq:kernel_eq} in the sense of Definition~\ref{def:defkernel}. Assume that $\zeta^\sigma_{\lambda}$ is a weak-$L^2$ solution to \eqref{linearized_scal} in the sense of Definition~\ref{def:weak-solution3} where $H_{l,\lambda}^{\sigma}$ and $H_{nl,\lambda}^{\sigma}$ are defined by \eqref{eq:operators} and $\delta \psi_\lambda$ by \eqref{eq:feedback-decomp}. For all $t \geq \tau_1$, define $g^\sigma_\lambda(t):= \mathcal T^\sigma_{\lambda,t}\zeta^\sigma_{\lambda}(t)$ where $\mathcal T_{\lambda,t}^\sigma$ is defined by \eqref{eq:defT}. Then $g^\sigma_\lambda$ is the unique weak $L^2$ solution to \eqref{target} in the sense of Definition~\ref{def:weak-target} with $g_\lambda^{\sigma, \tau_1}= \mathcal T^\sigma_{\lambda,\tau_1}\zeta^{\sigma,\tau_1}_{\lambda}$.
\end{lemma}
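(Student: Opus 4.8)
The plan is to prove the lemma by a duality argument at the level of the two weak formulations, reducing the desired identity $a^{\rm targ}(g^\sigma_\lambda,v)=0$ for every $v\in D^{\rm targ}$ to the already-known identity $a^{\rm ini}(\zeta^\sigma_\lambda,w)=0$ for a suitably chosen $w\in D^{\rm ini}$. Concretely, since $\xi\mapsto\mathcal{T}^\sigma_{\lambda,t}\xi$ is a spatial Volterra transform, its $L^2(0,\be(t))$-adjoint $(\mathcal{T}^\sigma_{\lambda,t})^*$ is again a Volterra transform (with the kernel integrated from $y$ up to $\be(t)$), and $\int_0^{\be(t)} g^\sigma_\lambda(t,\cdot)\phi=\int_0^{\be(t)}\zeta^\sigma_\lambda(t,\cdot)(\mathcal{T}^\sigma_{\lambda,t})^*\phi$. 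I would therefore set $w(t,\cdot):=(\mathcal{T}^\sigma_{\lambda,t})^* v(t,\cdot)$ for a given target test function $v$, show first that $w\in D^{\rm ini}$, and then that $a^{\rm ini}(\zeta^\sigma_\lambda,w)=a^{\rm targ}(g^\sigma_\lambda,v)$. Combined with the weak formulation of \eqref{linearized_scal}, this yields $a^{\rm targ}(g^\sigma_\lambda,v)=0$.

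The first step is the verification that $w$ belongs to $D^{\rm ini}$. Its interior regularity is inherited from $v$ and from the $H^1(D_t)$ regularity of $k^\sigma_\lambda$ supplied by Proposition~\ref{prop:estimates}. The boundary conditions are the delicate point: the conditions $\partial_y k^\sigma_\lambda(x,0)=0$ and $k^\sigma_\lambda(x,x)=-\frac{\lambda}{2\sigma}x$ in \eqref{eq:kernel_eq} are exactly what is needed so that the homogeneous Neumann conditions $\sigma\partial_x v(t,0)=0$ and $\sigma\partial_x v(t,\be(t))=0$ satisfied by $v$ are transported, under the adjoint transform, into the dual conditions $\sigma\partial_x w(t,0)=0$ and $K_{l,\lambda}^\sigma(t)w(t,\be(t))-\sigma\partial_x w(t,\be(t))=0$ defining $D^{\rm ini}$, with $K_{l,\lambda}^\sigma(t)=\sigma k^\sigma_\lambda(\be(t),\be(t))$ matching \eqref{eq:operators}. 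This is the rigorous counterpart of the formal boundary computation that motivated \eqref{eq:feedback}.

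The second step is the identity $a^{\rm ini}(\zeta^\sigma_\lambda,w)=a^{\rm targ}(g^\sigma_\lambda,v)$. Expanding $a^{\rm targ}$, substituting $g^\sigma_\lambda=\mathcal{T}^\sigma_\lambda\zeta^\sigma_\lambda$, and moving the transform onto the test function by Fubini, the interior term involving $\partial_t v+\sigma\partial_{xx}^2 v-\lambda v$ must be matched with the one involving $\partial_t w+\sigma\partial_{xx}^2 w$. The spatial second-derivative and the damping $-\lambda$ terms are reconciled precisely by the kernel equation $\partial_{xx}^2 k^\sigma_\lambda-\partial_{yy}^2 k^\sigma_\lambda=\frac{\lambda}{\sigma}k^\sigma_\lambda$; since the kernels are only weak solutions in the sense of Definition~\ref{def:defkernel}, I would invoke the weak identity (ii) of that definition rather than differentiate $k^\sigma_\lambda$ pointwise. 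The initial-time terms at $t=\tau_1$ match because $g_\lambda^{\sigma,\tau_1}=\mathcal{T}^\sigma_{\lambda,\tau_1}\zeta_\lambda^{\sigma,\tau_1}$, and the boundary term at $x=\be(t)$ produced by the spatial integrations by parts is exactly the nonlocal feedback contribution $\int_{\tau_1}^T H_{nl,\lambda}^\sigma(t)\zeta^\sigma_\lambda(t)\,w(t,\be(t))\,dt$ appearing in $a^{\rm ini}$.

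The main obstacle I anticipate is the interaction between the time derivative and the \emph{time-dependent} transform $\mathcal{T}^\sigma_{\lambda,t}$: because the adjoint $(\mathcal{T}^\sigma_{\lambda,t})^*$ depends on $t$ through the moving endpoint $\be(t)$, one has $\partial_t\big((\mathcal{T}^\sigma_{\lambda,t})^*v\big)\neq(\mathcal{T}^\sigma_{\lambda,t})^*\partial_t v$, and the Leibniz rule on the moving domain generates extra traces at $x=\be(t)$. I would show these commutator terms cancel using $\be'(t)=\bv$ together with the diagonal boundary condition $k^\sigma_\lambda(x,x)=-\frac{\lambda}{2\sigma}x$, so that no spurious boundary contribution survives; this is where the structure of the growing domain genuinely enters. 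Since $\zeta^\sigma_\lambda$ is only an $L^2$ solution with $\partial_t\zeta^\sigma_\lambda\in(H^1)'$, these manipulations cannot be performed on traces of $\partial_x\zeta^\sigma_\lambda$ directly, so I would justify them either by working entirely within the weak formulations above or by a density argument approximating $v$ and $\zeta_\lambda^{\sigma,\tau_1}$ by smooth data. Finally, having checked that $g^\sigma_\lambda\in[\mathcal{C}^0([\tau_1,+\infty),L^2)]_{\be}$ with $\partial_t g^\sigma_\lambda\in[L^2((\tau_1,+\infty),(H^1)')]_{\be}$ (using the $L^2$-boundedness of $\mathcal{T}^\sigma_{\lambda,t}$ from the kernel estimates and the regularity of $\zeta^\sigma_\lambda$), I would conclude that $g^\sigma_\lambda$ is a weak solution of \eqref{target} with initial datum $\mathcal{T}^\sigma_{\lambda,\tau_1}\zeta_\lambda^{\sigma,\tau_1}$, and uniqueness then follows from Proposition~\ref{prop:stability-target}.
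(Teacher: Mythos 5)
Your proposal is correct and follows essentially the same route as the paper: the adjoint Volterra transform $(\mathcal T^{\sigma}_{\lambda,t})^{*}$ you introduce is exactly the operator $\mathcal G$ of the paper's Lemma~\ref{lem:duality}, whose two key points are precisely your two steps (that $\mathcal G$ maps $D^{\rm targ}$ into $D^{\rm ini}$ with the boundary condition matching $K_{l,\lambda}^{\sigma}(t)=\sigma k^{\sigma}_{\lambda}(\be(t),\be(t))$, and that $a^{\rm targ}(g,v)=a^{\rm ini}(\zeta,\mathcal G v)$ via the weak kernel formulation), followed by the same continuity and $\partial_t g\in[L^2((H^1)')]_{\be}$ checks. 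One small bookkeeping remark: the moving-boundary trace generated by $\be'(t)=\bv$ does not cancel against the diagonal condition but is absorbed as the $\bv k^{\sigma}_{\lambda}(\be(t),y)$ part of the nonlocal feedback $H^{\sigma}_{nl,\lambda}$, consistent with your observation that the boundary terms reproduce that contribution.
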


The objective of Lemma~\ref{lem:back-map} is to state the following point: let $g^\sigma_{\lambda}$ be the solution to the target problem, then $\zeta^\sigma_{\lambda} := \mathcal{T}_{\lambda}^{\sigma, {\rm inv}} g^\sigma_{\lambda}$ is a solution to the original problem.

\begin{lemma}\label{lem:back-map}
    Let $\sigma >0$, let $\lambda \geq \lambda_\sigma$ where $\lambda_\sigma$ is defined in Proposition~\ref{prop:estimates}, $\tau_1 \geq 0$ and $g_{\lambda}^{\sigma,\tau_1} \in L^2(0,\be(\tau_1))$. Let $l^\sigma_\lambda$ be the unique weak solution to \eqref{eq:kernel_inv_eq}. Let $g^\sigma_{\lambda}$ be the unique weak-$L^2$ solution to \eqref{target} in the sense of Proposition~\ref{prop:stability-target}.

    For all $t \geq \tau_1$, define $\zeta^\sigma_\lambda(t):= \mathcal T_{\lambda, t}^{\sigma, {\rm inv}} g_\lambda^\sigma(t)$ where $\mathcal T_{\lambda,t}^{\sigma, {\rm inv}}$ is defined in \eqref{eq:defTinv}. Then, $\zeta_{\lambda}$ is a weak-$L^2$ solution to \eqref{linearized_scal} with $\zeta_\lambda^{\sigma, \tau_1} = \mathcal T_{\lambda,\tau_1}^{\sigma, \rm inv} g_\lambda^{\sigma,\tau_1}$, $\delta \psi_\lambda^\sigma$ defined by \eqref{eq:feedback-decomp} and $(H_{l,\lambda}^{\sigma}(t))_{t \geq 0}, (H_{nl,\lambda}^{\sigma}(t))_{t \geq 0}$ defined by \eqref{eq:operators}.

\end{lemma}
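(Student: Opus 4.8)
The plan is to prove the statement by the same weak-formulation duality used, in the forward direction, for Lemma~\ref{lem:map}, now transporting the solvability of the target problem back to the original one through the inverse kernel $l^\sigma_\lambda$. Set $\zeta^\sigma_\lambda(t):=\mathcal{T}^{\sigma,\mathrm{inv}}_{\lambda,t}g^\sigma_\lambda(t)$ as in \eqref{eq:defTinv}. First I would record that $\zeta^\sigma_\lambda$ has the regularity required by Definition~\ref{def:weak-solution3}: since $l^\sigma_\lambda|_{D_t}\in H^1(D_t)$ by Proposition~\ref{prop:estimates}, the Volterra operator $\mathcal{T}^{\sigma,\mathrm{inv}}_{\lambda,t}$ is bounded on $L^2(0,\be(t))$, uniformly on compact time intervals, and depends continuously on $t$, so $g^\sigma_\lambda\in[\mathcal C^0 L^2]_{\be}$ yields $\zeta^\sigma_\lambda\in[\mathcal C^0 L^2]_{\be}$; and because the integration limit $x$ in \eqref{eq:defTinv} does not depend on $t$, one has $\partial_t\zeta^\sigma_\lambda(t,x)=\partial_t g^\sigma_\lambda(t,x)+\int_0^x l^\sigma_\lambda(x,y)\,\partial_t g^\sigma_\lambda(t,y)\,dy$, which, paired against $H^1$ functions, gives $\partial_t\zeta^\sigma_\lambda\in[L^2((\tau_1,\infty);(H^1)')]_{\be}$. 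It then remains to verify the variational identity $a^{\mathrm{ini}}(\zeta^\sigma_\lambda,w)=0$ for every admissible test function $w\in D^{\mathrm{ini}}$.

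The heart of the argument is to associate to each $w\in D^{\mathrm{ini}}$ a function $v$ lying in the target test space $D^{\mathrm{targ}}$ and satisfying $a^{\mathrm{ini}}(\zeta^\sigma_\lambda,w)=a^{\mathrm{targ}}(g^\sigma_\lambda,v)$; the right-hand side then vanishes because $g^\sigma_\lambda$ solves \eqref{target} in the sense of Definition~\ref{def:weak-target}. The natural candidate is the formal adjoint of $\mathcal{T}^{\sigma,\mathrm{inv}}_{\lambda,t}$, namely $v(t,x):=w(t,x)+\int_x^{\be(t)}l^\sigma_\lambda(s,x)\,w(t,s)\,ds$, which is exactly what the Fubini swap in the bulk term of $a^{\mathrm{ini}}$ produces after substituting $\zeta^\sigma_\lambda=g^\sigma_\lambda+\int_0^{\cdot}l^\sigma_\lambda(\cdot,y)g^\sigma_\lambda(t,y)\,dy$. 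I would first check that $v\in D^{\mathrm{targ}}$. The terminal condition follows from $w(T,\cdot)=0$. For the Neumann condition at $0$, differentiation gives $\partial_x v(t,0)=\partial_x w(t,0)-l^\sigma_\lambda(0,0)w(t,0)+\int_0^{\be(t)}(\partial_y l^\sigma_\lambda)(s,0)\,w(t,s)\,ds$, which vanishes since $(\partial_y l^\sigma_\lambda)(s,0)=0$, $l^\sigma_\lambda(0,0)=0$, and $\partial_x w(t,0)=0$. For the Neumann condition at $\be(t)$ one gets $\partial_x v(t,\be(t))=\partial_x w(t,\be(t))-l^\sigma_\lambda(\be(t),\be(t))\,w(t,\be(t))$; using the boundary condition (v) of $D^{\mathrm{ini}}$, $\sigma\partial_x w(t,\be(t))=K^\sigma_{l,\lambda}(t)\,w(t,\be(t))$ with $K^\sigma_{l,\lambda}(t)=\sigma k^\sigma_\lambda(\be(t),\be(t))=-\tfrac{\lambda}{2}\be(t)$, together with the diagonal trace $\sigma\,l^\sigma_\lambda(\be(t),\be(t))=-\tfrac{\lambda}{2}\be(t)$ coming from \eqref{eq:kernel_inv_eq}, these two contributions cancel exactly. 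This matching of diagonal traces is precisely where the definition \eqref{eq:operators} of the feedback operators is used.

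Next I would establish the identity $a^{\mathrm{ini}}(\zeta^\sigma_\lambda,w)=a^{\mathrm{targ}}(g^\sigma_\lambda,v)$. Substituting the inverse transform into the bulk and initial terms of $a^{\mathrm{ini}}$ and applying Fubini yields $g^\sigma_\lambda$ tested against the image of $\partial_t w+\sigma\partial_{xx}^2 w$ under the same map $w\mapsto v$, and against the image of $w(\tau_1,\cdot)$ in the initial term (which is exactly $v(\tau_1,\cdot)$, matching $g^{\sigma,\tau_1}_\lambda$). Two integrations by parts in $x$ move the second derivative onto the kernel, and the inverse kernel equation \eqref{eq:kernel_inv_eq}, namely $\partial_{xx}^2 l^\sigma_\lambda-\partial_{yy}^2 l^\sigma_\lambda=-\tfrac{\lambda}{\sigma}l^\sigma_\lambda$, converts this image of the heat operator into $\partial_t v+\sigma\partial_{xx}^2 v-\lambda v$, that is, the operator appearing in $a^{\mathrm{targ}}$. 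The boundary contributions generated at $x=0$ vanish through the Neumann conditions verified above, while those at $x=\be(t)$ — together with the term $-\bv\,l^\sigma_\lambda(\be(t),\cdot)\,w(t,\be(t))$ arising from differentiating the moving upper limit $\be(t)$ in $\partial_t v$ — must be reconciled with the non-local feedback term $\int_{\tau_1}^T\bigl(H^\sigma_{nl,\lambda}(t)\zeta^\sigma_\lambda(t)\bigr)\,w(t,\be(t))\,dt$ of $a^{\mathrm{ini}}$, built from $k^\sigma_\lambda$ in \eqref{eq:operators}. This reconciliation rests on the reciprocity relation between $k^\sigma_\lambda$ and $l^\sigma_\lambda$ inherited from $\mathcal{T}^\sigma_{\lambda,t}\circ\mathcal{T}^{\sigma,\mathrm{inv}}_{\lambda,t}=\mathrm{Id}$ (Lemma~\ref{lem:inv-map}) and on the equivalence \eqref{eq:kernel-equiv}, and it is the computational counterpart, read in reverse, of the derivation of the feedback \eqref{eq:feedback}.

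I expect the main obstacle to be twofold and entirely concentrated in the last step. First, all of the above integrations by parts and boundary manipulations are only formal for a merely weak solution $g^\sigma_\lambda$ (one has $g^\sigma_\lambda(t)\in L^2$ with $\partial_t g^\sigma_\lambda\in(H^1)'$), so they must be justified rigorously, e.g.\ by approximating $g^\sigma_\lambda$ by smooth functions, or by testing the weak formulation of \eqref{target} directly against the admissible $v$ so that $g^\sigma_\lambda$ is never differentiated pointwise; the $H^1$ (indeed $\mathcal C^2$) regularity of the kernels from Proposition~\ref{prop:estimates} is what makes $v$ an admissible element of $D^{\mathrm{targ}}$ and keeps all kernel manipulations licit. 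Second, the careful bookkeeping of the boundary terms at the moving interface $x=\be(t)$ — matching the non-local term built from $k^\sigma_\lambda$ against the inverse transform built from $l^\sigma_\lambda$ and against the moving-boundary contribution proportional to $\bv$ — is delicate; this is precisely the point at which the construction of the feedback and the relation \eqref{eq:kernel-equiv} pay off, and since it mirrors in reverse the corresponding step of Lemma~\ref{lem:map}, I would organize the computation so as to reuse that bookkeeping as directly as possible.
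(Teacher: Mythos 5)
Your proposal is correct and follows essentially the same route as the paper: the paper proves this via the duality identity $a^{\rm targ}(g,v)=a^{\rm ini}(\zeta,\mathcal{G}v)$ of Lemma~\ref{lem:duality}, where $\mathcal{G}$ is exactly the adjoint Volterra map you describe, and then obtains the backward statement by noting that $\mathcal{G}^{-1}$ (which is your operator $w\mapsto w+\int_x^{\be(t)}l^\sigma_\lambda(s,x)w(t,s)\,ds$) sends $D^{\rm ini}$ into $D^{\rm targ}$, so that $a^{\rm ini}(\zeta,w)=a^{\rm targ}(g,\mathcal{G}^{-1}w)=0$. Your regularity checks and the cancellation of the boundary traces at $x=\be(t)$ (via $\sigma k^\sigma_\lambda(\be(t),\be(t))=\sigma l^\sigma_\lambda(\be(t),\be(t))=-\tfrac{\lambda}{2}\be(t)$) match the paper's; the only cosmetic difference is that you make $\mathcal{G}^{-1}$ explicit through $l^\sigma_\lambda$ rather than obtaining it abstractly by inverting $\mathcal{G}$, and the final bookkeeping rests on the weak formulation of the kernel PDE rather than on \eqref{eq:kernel-equiv}, which relates $k_\lambda$ to $l_{-\lambda}$ and is not the relevant identity there.
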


The proofs of Lemmas~\ref{lem:map} and \ref{lem:back-map} are postponed to Section~\ref{proof:lems}. Lemma~\ref{lem:back-map} together with Propositions~\ref{prop:stability-target} and \ref{prop:estimates} yield the existence of at least one weak-$L^2$ solution to \eqref{linearized_scal} for any $\zeta_{\lambda}^{\sigma,\tau_1}\in L^2(0, \be(\tau_1))$ provided that $\lambda \geq \lambda_\sigma$, and this solution satisfies the stability estimate \eqref{eq:stability}.  Lemmas~\ref{lem:inv-map} and \ref{lem:back-map} yield uniqueness of this solution. As a consequence, the feedback control \eqref{eq:feedback} stabilizes \eqref{linearized_scal} exponentially with an arbitrary decay provided $\lambda$ is chosen large enough. The object of Corollary~\ref{cor:exp-stability} is to summarize these points.

\begin{corollary}\label{cor:exp-stability}
    Let $\sigma >0$, $\tau_1\geq 0$, $\lambda \geq \lambda_\sigma$ where $\lambda_\sigma$ is defined in Proposition~\ref{prop:estimates} and $\zeta_\lambda^{\sigma,\tau_1} \in L^2(0, \be(\tau_1))$. Then, there exists a unique weak-$L^2$ solution to problem \eqref{linearized_scal} in the sense of Definition~\ref{def:weak-solution3} with $\delta \psi_\lambda^\sigma$ defined by \eqref{eq:feedback-decomp} and operators $(H_{nl,\lambda}^\sigma(t))_{t \geq \tau_1}$ and $(H_{l,\lambda}^\sigma(t))_{t \geq \tau_1}$ defined by \eqref{eq:operators}. Moreover, there exist constants $C,c>0$ independent of  $\lambda$, $\sigma$ and $t$ such that this solution satisfies, for any $t\geq \tau_1$:
    \begin{equation}
        \label{eq:stability}
        \|\zeta_{\lambda}^\sigma(t)\|_{L^2(0,\be(t))} \leq C \left(1 + \left(\frac{\lambda}{\sigma}\right)^2 \right) e^{c \be(\tau_1) \sqrt{\lambda/\sigma} + c \be(t) - \lambda (t-\tau_1)} \|\zeta_\lambda^{\sigma,\tau_1} \|_{L^2(0,\be(\tau_1))}.
    \end{equation}
\end{corollary}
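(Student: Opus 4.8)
The plan is to obtain the corollary purely by assembling the results already established for the target problem and for the backstepping transformation, since all the genuinely analytical content has been packaged into Proposition~\ref{prop:stability-target} (well-posedness and decay of the target problem) and Proposition~\ref{prop:estimates} (the kernel estimates). The corollary is then an existence--uniqueness statement glued together by the forward and inverse transforms, followed by one computation to extract the explicit bound~\eqref{eq:stability}.

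For the existence part, I would start from the given datum $\zeta_\lambda^{\sigma,\tau_1}\in L^2(0,\be(\tau_1))$ and push it through the forward transform, setting $g_\lambda^{\sigma,\tau_1}:=\mathcal T^\sigma_{\lambda,\tau_1}\zeta_\lambda^{\sigma,\tau_1}$, which lies in $L^2(0,\be(\tau_1))$ because $k^\sigma_\lambda|_{D_{\tau_1}}\in L^2(D_{\tau_1})$ by Proposition~\ref{prop:estimates}. Proposition~\ref{prop:stability-target} then provides a unique weak $L^2$ solution $g^\sigma_\lambda$ of the target problem~\eqref{target} with this datum, together with the decay~\eqref{eq:stabtarget}. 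Applying the inverse transform and invoking Lemma~\ref{lem:back-map} (legitimate since $\lambda\geq\lambda_\sigma$), the function $\zeta^\sigma_\lambda(t):=\mathcal T^{\sigma,{\rm inv}}_{\lambda,t}g^\sigma_\lambda(t)$ is a weak $L^2$ solution of~\eqref{linearized_scal} in the sense of Definition~\ref{def:weak-solution3}, with the prescribed feedback and operators; this settles existence.

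For uniqueness I would argue in the reverse direction: if $\zeta^{(1)},\zeta^{(2)}$ are two weak $L^2$ solutions of~\eqref{linearized_scal} with the same datum and feedback, then Lemma~\ref{lem:map} shows that $\mathcal T^\sigma_\lambda\zeta^{(1)}$ and $\mathcal T^\sigma_\lambda\zeta^{(2)}$ are both weak $L^2$ solutions of~\eqref{target} sharing the initial datum $\mathcal T^\sigma_{\lambda,\tau_1}\zeta_\lambda^{\sigma,\tau_1}$. Uniqueness in Proposition~\ref{prop:stability-target} forces these target solutions to coincide, and then the invertibility of $\mathcal T^\sigma_{\lambda,t}$ on $L^2(0,\be(t))$ at each fixed time (Lemma~\ref{lem:inv-map}) yields $\zeta^{(1)}=\zeta^{(2)}$. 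It is precisely here that the time-derivative regularity built into Definition~\ref{def:weak-solution3} is used, so that Lemma~\ref{lem:map} may be applied in this solution class (cf. Remark~\ref{rq:time-derivative}).

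It remains to derive the quantitative bound~\eqref{eq:stability}, which is the only computational step and where the main bookkeeping obstacle lies. I would follow the chain~\eqref{stability_loop}: using the triangular form~\eqref{eq:invmap} of $\mathcal T^{\sigma,{\rm inv}}_{\lambda,t}$, bound $\|\zeta^\sigma_\lambda(t)\|_{L^2(0,\be(t))}$ by $(1+\|l^\sigma_\lambda\|_{L^2(D_t)})\,\|g^\sigma_\lambda(t)\|_{L^2(0,\be(t))}$, insert the target decay~\eqref{eq:stabtarget}, and finally bound $\|g_\lambda^{\sigma,\tau_1}\|_{L^2}\leq(1+\|k^\sigma_\lambda\|_{L^2(D_{\tau_1})})\,\|\zeta_\lambda^{\sigma,\tau_1}\|_{L^2}$. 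The kernel norms are then controlled by integrating the pointwise bounds of Proposition~\ref{prop:estimates} over the triangular domains: integrating~\eqref{k-estimate} over $x\in(0,\be(\tau_1))$ gives $\|k^\sigma_\lambda\|_{L^2(D_{\tau_1})}\leq Ce^{c\be(\tau_1)\sqrt{\lambda/\sigma}}$, and integrating~\eqref{l-estimate} over $x\in(0,\be(t))$ gives $\|l^\sigma_\lambda\|_{L^2(D_t)}\leq C(\lambda/\sigma)^2 e^{c\be(t)}$, the harmless factor $\sqrt{\be(t)}$ being absorbed into the exponential. Collecting the three bounds reproduces exactly the prefactor $C(1+(\lambda/\sigma)^2)$ and the exponential $e^{c\be(\tau_1)\sqrt{\lambda/\sigma}+c\be(t)-\lambda(t-\tau_1)}$. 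The delicate points are tracking the powers of $\lambda/\sigma$ (the quadratic prefactor arises specifically from integrating the $L^2$-in-$y$ estimate~\eqref{l-estimate} in the $x$ variable) and verifying that the constants $C,c$ inherited from Proposition~\ref{prop:estimates} remain independent of $\sigma,\lambda,t$, as asserted. The fact that the surviving prefactor grows only like $e^{c\be(t)}$ in $t$, since $\be$ is affine, is what is later defeated by taking $\lambda$ large and is the crux of the downstream finite-time argument.
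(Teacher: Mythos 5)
Your proposal is correct and follows essentially the same route as the paper: existence by transporting the datum through $\mathcal T^\sigma_{\lambda,\tau_1}$, solving the target problem via Proposition~\ref{prop:stability-target}, pulling back with $\mathcal T^{\sigma,{\rm inv}}_{\lambda,t}$ and Lemma~\ref{lem:back-map}; uniqueness via Lemma~\ref{lem:map}, uniqueness of the target solution and Lemma~\ref{lem:inv-map}; and the estimate by the chain \eqref{stability_loop} combined with the kernel bounds of Proposition~\ref{prop:estimates}. The bookkeeping of the $(\lambda/\sigma)^2$ prefactor and the absorption of the polynomial factors into $e^{c\be(t)}$ matches the paper's computation.
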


\begin{proof}
    \emph{Existence and estimate} : Let $k_{\lambda}^\sigma$ and $l_{\lambda}^\sigma$ be the kernels defined in Proposition~\ref{prop:estimates}. Since $k_{\lambda}^\sigma|_{D_{\tau_1}} \in L^2(D_{\tau_1})$, one can define $g_{\lambda}^{\sigma,\tau_1} := \mathcal{T}_{\lambda,\tau_1}^\sigma \zeta_{\lambda}^{\sigma,\tau_1} \in L^2((0,\be(\tau_1)))$. Then by Proposition~\ref{prop:stability-target}, there exists a unique weak-$L^2$ solution $g_{\lambda}^\sigma$ to \eqref{target} with initial condition $g_{\lambda}^{\sigma,\tau_1}$, and this solution satisfies \eqref{eq:stabtarget}. Since, for any $t \geq \tau_1$, $l_{\lambda}^\sigma|_{D_t} \in L^2(D_t)$, one can define $\zeta_{\lambda}^\sigma(t) := \mathcal{T}_{\lambda,t}^{\sigma,\rm inv} g_{\lambda}^{\sigma}(t)$ and by Lemma~\ref{lem:back-map}, it defines a solution to \eqref{linearized_scal} associated to operators $(H_{nl,\lambda}^\sigma(t))_{t \geq \tau_1}$ and $(H_{l,\lambda}^\sigma(t))_{t \geq \tau_1}$. Moreover, estimate \eqref{eq:stability} follows from the definition of $\mathcal{T}_{\lambda}^{\sigma,\rm inv}$ together with the estimates \eqref{eq:stabtarget} and \eqref{l-estimate}:
    \begin{equation*}
        \begin{aligned}
            \|\zeta_\lambda^\sigma(t)\|_{L^2(0,\be(t))} & \leq \left(1 + \|l^\sigma_{\lambda}(t)\|_{L^2(D_t)}\right) \left(1 + \|k^\sigma_{\lambda}(\tau_1)\|_{L^2(D_{\tau_1})} \right) e^{-\lambda (t-\tau_1)} \|\zeta^{\sigma,\tau_1}\|_{L^2(0,\be(\tau_1))}                  \\
                                                        & \leq \left(1 + C \left(\frac{\lambda}{\sigma}\right)^2 e^{c\be(t)} \right) \left(1 + Ce^{c\be(\tau_1)\sqrt{\lambda/\sigma}} \right) e^{-\lambda(t-\tau_1)} \|\zeta_{\lambda}^{\sigma,\tau_1} \|_{L^2(0,\be(\tau_1))}, \\
                                                        & \leq \tilde{C} \left(1 + \left(\frac{\lambda}{\sigma}\right)^2 \right) e^{c \be(\tau_1) \sqrt{\lambda/\sigma} + c \be(t) - \lambda (t-\tau_1)} \|\zeta_\lambda^{\sigma,\tau_1} \|_{L^2(0,\be(\tau_1))}
        \end{aligned}
    \end{equation*}

    \emph{Uniqueness} : take two weak-$L^2$ solutions to \eqref{linearized_scal} $\zeta_1$ and $\zeta_2$. Then by Lemma~\ref{lem:map}, it holds:
    \[ \mathcal{T_{\lambda}^\sigma}(\zeta_1-\zeta_2) = 0, \]
    but Lemma~\ref{lem:inv-map} yields:
    \[ \zeta_1=\zeta_2.\]

\end{proof}

\section{Proofs}\label{sec:proofs}

\subsection{Proof of Proposition~\ref{prop:estimates}}\label{proof:prop3}

We begin by proving a few preliminary lemmas. In the following, the variable $x$ should be interpreted as the time variable of a wave equation.

\begin{lemma}
    \label{lem:square}
    Let $\alpha\in \mathbb{R}$, $L >0$ and $f \in L^2((0,L)^2)$. Then, there exists a unique solution $K \in  \cC^0([0,L], H^1(0,L))$ such that $\partial_x K \in \cC^0([0,L], L^2(0,L))$ and $\partial_{xx} K \in L^2((0,L), (H^1(0,L))')$ solution to the equation
    \begin{equation}
        \label{eq:complete}
        \left\{ \begin{array}{ll}
            \partial_{xx}K(x,y) - \partial_{yy}K(x,y)  =  \alpha K(x,y) + f(x,y), & ~ \text{for} ~ (x,y)\in(0,L)^2, \\
            \partial_y K(x,0)=\partial_y K(x,L) = 0 ,                             & ~ \text{for} ~ x \in (0,L),     \\
            K(0,y) = \partial_x K(0,y) = 0,                                       & ~ \text{for} ~ y \in (0,L),     \\
        \end{array}
        \right.
    \end{equation}
    in the sense that, for all $v\in H^1(0,L)$, for almost all $x\in (0,L)$,
    \begin{equation}\label{eq:weakwave}
        \left\langle \partial_{xx}K(x,\cdot),v\right\rangle_{H^1(0,L)', H^1(0,L)} + \int_0^L \partial_y K(x,y) \partial_y v(y)\,dy = \alpha \int_0^L K(x,y)v(y)\,dy + \int_0^L f(x,y)v(y)\,dy.
    \end{equation}

    Moreover, there exists a constant $C > 0$ independent of $\alpha$ and $L$ such that for almost any $x \in (0,L)$,
    \begin{equation}
        \label{eq:H1-estimate}
        \int_0^L \left(|K(x,y)|^2 + |\nabla K(x,y)|^2\right) dy \leq (1 + L^2)e^{C\max([\alpha]_+^{1/2}, 1)L} \|f\|_{L^2((0,L)^2)}^2,
    \end{equation}
    where $[\alpha]_+:= \max(\alpha, 0)$ denotes the positive part of $\alpha$.
\end{lemma}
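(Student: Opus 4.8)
The plan is to treat $x$ as a time variable, so that \eqref{eq:complete} becomes a one-dimensional Neumann wave (Klein--Gordon) equation with zeroth-order source $F:=\alpha K + f$ and vanishing Cauchy data at $x=0$. The weak formulation \eqref{eq:weakwave} is exactly of the variational type $\langle \partial_{xx}K,v\rangle_{(H^1)',H^1} + \int_0^L \partial_y K\,\partial_y v = \int_0^L Fv$ associated with a second-order hyperbolic equation, so existence and uniqueness in the stated energy class follow from the standard Galerkin method for such equations. I would run the Galerkin scheme in the orthonormal Neumann eigenbasis $e_0(y)=L^{-1/2}$, $e_k(y)=\sqrt{2/L}\cos(k\pi y/L)$ of $-\partial_{yy}$ on $(0,L)$, with eigenvalues $\mu_k=(k\pi/L)^2$; this particular choice is what makes the sharp estimate \eqref{eq:H1-estimate} accessible.

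Writing $K(x,y)=\sum_k a_k(x)e_k(y)$ and $f(x,y)=\sum_k f_k(x)e_k(y)$, the problem decouples into the scalar ODEs $a_k''+\nu_k a_k=f_k$ with $\nu_k:=\mu_k-\alpha$ and $a_k(0)=a_k'(0)=0$, solved by Duhamel's formula $a_k(x)=\int_0^x S_{\nu_k}(x-s)f_k(s)\,ds$ and $a_k'(x)=\int_0^x C_{\nu_k}(x-s)f_k(s)\,ds$, where $S_\nu(u)=\sin(\sqrt\nu u)/\sqrt\nu$, $C_\nu(u)=\cos(\sqrt\nu u)$ for $\nu>0$, $S_\nu(u)=\sinh(\sqrt{-\nu}u)/\sqrt{-\nu}$, $C_\nu(u)=\cosh(\sqrt{-\nu}u)$ for $\nu<0$ (and $S_0(u)=u$, $C_0(u)=1$). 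Existence, uniqueness and the regularity $K\in\mathcal{C}^0([0,L],H^1)$, $\partial_x K\in\mathcal{C}^0([0,L],L^2)$, $\partial_{xx}K\in L^2((0,L),(H^1)')$ then follow once the modal series are shown to converge in those norms, which is a consequence of the per-mode bounds below applied uniformly to the Galerkin truncations and passed to the limit.

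The core of the lemma is the sharp estimate \eqref{eq:H1-estimate}, and the whole point is to obtain the rate $\sqrt{[\alpha]_+}$ rather than $[\alpha]_+$: a direct hyperbolic energy estimate on \eqref{eq:complete} only yields a constant of order $e^{C\alpha L}$, which is why I would argue mode by mode. For the stable modes $\nu_k\ge0$ I would use the conserved energy $E_k:=\frac12(a_k'^2+\nu_k a_k^2)$, which satisfies $E_k'=a_k' f_k$ and hence $a_k'(x)^2+\nu_k a_k(x)^2\le L\|f_k\|_{L^2(0,L)}^2$, a bound with no exponential growth. For the unstable modes $\nu_k<0$ I would set $\beta_k:=\sqrt{\alpha-\mu_k}\le\sqrt{[\alpha]_+}$ and read off from the $\sinh/\cosh$ propagators the uniform bounds $|S_{\nu_k}(u)|\le u\,e^{\sqrt{[\alpha]_+}u}$ and $|C_{\nu_k}(u)|\le e^{\sqrt{[\alpha]_+}u}$; together with $|S_\nu(u)|\le u$ for $\nu\ge0$, a Cauchy--Schwarz in $s$ and summation over $k$ these control $\|K(x,\cdot)\|_{L^2}^2=\sum_k a_k^2$ and $\|\partial_x K(x,\cdot)\|_{L^2}^2=\sum_k a_k'^2$ by $(L+L^3)e^{2\sqrt{[\alpha]_+}x}\|f\|_{L^2((0,L)^2)}^2$.

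The remaining and most delicate term is $\|\partial_y K(x,\cdot)\|_{L^2}^2=\sum_k\mu_k a_k^2$, where the factor $\mu_k$ can be as large as $\alpha$. Splitting $\mu_k a_k^2\le\nu_k a_k^2+[\alpha]_+a_k^2$, the first part is absorbed by the stable-mode energy bound, while the second produces a prefactor $[\alpha]_+$. The subtle point I would exploit is that a mode with $k\ge1$ and $\mu_k$ near $\alpha$ (the only source of an uncompensated $[\alpha]_+$ factor) forces $(k\pi/L)^2<\alpha$, hence $\sqrt{[\alpha]_+}\,L>k\pi\ge\pi$; precisely in that regime the exponential $e^{C\sqrt{[\alpha]_+}L}$ dominates any power of $\alpha$. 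I expect the estimate to close by elementary inequalities of the type $\frac{L^3}{1+L^2}\le L$, $\frac{L}{1+L^2}\le1$ and $s^2\le e^{Cs}$ (with $s=\sqrt{[\alpha]_+}\,L$ and $C$ large), which absorb the algebraic prefactors and the stray term $L$ into $(1+L^2)e^{C\max([\alpha]_+^{1/2},1)L}$. The main obstacle is exactly this bookkeeping: proving the sharp square-root rate requires the spectral splitting together with the resonance/regime argument that upgrades the near-resonant $[\alpha]_+$ prefactors into the exponential, something the unstructured energy method cannot detect.
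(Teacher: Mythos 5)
Your proof is correct, but it takes a genuinely different route from the paper's. The paper proves Lemma~\ref{lem:square} by a pure energy argument: test \eqref{eq:weakwave} with $v=\partial_x K(x,\cdot)$ (first for $f$ with $\partial_x f\in L^2$, then conclude by density), integrate in $x$ to get $\int_0^L(|\partial_xK|^2+|\partial_yK|^2)\,dy\le \alpha\int_0^L|K|^2\,dy+\int_0^x\int_0^L(|\partial_xK|^2+|\partial_yK|^2)+\|f\|^2$, and — this is the key device — for $\alpha>0$ perform the anisotropic rescaling $\hat x=\alpha^{1/2}x$ so that a Gronwall-type argument on $\hat x\in(0,\alpha^{1/2}L)$ produces the exponent $C\alpha^{1/2}L$ directly; existence and uniqueness are quoted from the abstract theory of second-order evolution equations. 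You instead diagonalize in the Neumann cosine basis and solve each mode by Duhamel, getting the square-root rate from the elementary fact that the unstable modes grow at rate $\sqrt{\alpha-\mu_k}\le\sqrt{[\alpha]_+}$, the stable modes at no rate at all, and absorbing the leftover algebraic factor $[\alpha]_+$ in $\sum_k\mu_k a_k^2$ via $(\sqrt{[\alpha]_+}L)^2\le e^{2\sqrt{[\alpha]_+}L}$ (your "resonance" discussion is not actually needed for this last step, but it is harmless). Your route is more elementary and self-contained — no abstract existence theorem, no approximation step, and it works directly for $f\in L^2$ — and it makes the origin of the $\sqrt{[\alpha]_+}$ rate completely transparent. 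What it gives up is robustness: it is rigidly tied to constant coefficients and the exact Neumann geometry, whereas the paper's energy-plus-rescaling argument attains the \emph{same} square-root rate (so your claim that "the unstructured energy method cannot detect" it is not accurate) and is precisely the argument the authors advertise as extendable to space-dependent coefficients in the remark following Proposition~\ref{prop:estimates}. One cosmetic point: your resummation yields \eqref{eq:H1-estimate} only up to a universal multiplicative constant in front of $(1+L^2)e^{C\max([\alpha]_+^{1/2},1)L}$, which cannot be absorbed into the exponential as $L\to0$; this is irrelevant for the application, where $L=\be(t)\ge\be_0>0$ and the downstream estimates \eqref{k-estimate}--\eqref{l-estimate} carry a free constant anyway, but you should state the estimate with that constant rather than claim the literal form.
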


\begin{proof}
    The existence and uniqueness of a solution $K$ to problem~\eqref{eq:complete} in the sense of \eqref{eq:weakwave} such that $K \in  \cC^0([0,L], H^1(0,L))$, $\partial_x K \in \cC^0([0,L], L^2(0,L))$ and $\partial_{xx} K \in L^2((0,L), (H^1(0,L))')$ is a direct consequence of~\cite{brezis_functional_2011}[Theorem~10.14,p.345]. Let us now prove estimate \eqref{eq:H1-estimate}.

    \medskip

    {\bfseries Step~1 (smooth $f$):} Let us first assume that $f$ satisfies the additional regularity constraint $\partial_x f \in L^2((0,L)^2)$. Then differentiating the equation with respect to $x$ as in the proof of \cite{evans2010}[Theorem~5,p.389], it can be checked that $\partial_{xx}K \in L^\infty((0,L), L^2(0,L))$,
    $\partial_x K \in L^\infty((0,L), H^1(0,L))$ and $K\in L^\infty((0,L), H^2(0,L))$. In particular, for almost all $x\in (0,L)$, $\partial_{xx}K(x,\cdot)\in L^2(0,L)$, $\partial_x K(x,\cdot)\in H^1(0,L)$ and $K(x,\cdot)\in H^2(0,L)$.

    Taking $v=\partial_x K(x,\cdot)$ as a test function in \eqref{eq:weakwave} yields that for almost all $x\in (0,L)$,
    \begin{align*}
        \left\langle \partial_{xx}K(x,\cdot),\partial_x K(x,\cdot)\right\rangle_{H^1(0,L)', H^1(0,L)} + \int_0^L \partial_y K(x,y) \partial_{xy} K(x,y) \,dy & = \alpha \int_0^L K(x,y) \partial_x K(x,y)\,dy \\
                                                                                                                                                             & + \int_0^L f(x,y)\partial_x K(x,y)\,dy.        \\
    \end{align*}
    This yields, using the Aubin-Lions theorem, that
    \begin{equation}
        \label{eq:weak-K}
        \frac{1}{2} \frac{d}{dx}\left( \int_0^L \left(|\partial_{x}K(x,y)|^2 + |\partial_{y}K(x,y)|^2\right)\,dy \right) = \frac{\alpha}{2}\frac{d}{dx}\left(  \int_0^L |K(x,y)|^2\,dy\right) + \int_0^L f(x,y)\partial_x K(x,y)\,dy.
    \end{equation}

    Now, using the fact that $\partial_y K(0,y) = 0$ (since $K(0,y) = 0$ for almost all $y\in (0,L)$) and the fact that $f(x,y)\partial_xK (x,y) \leq \frac{1}{2} (f(x,y)^2 + \partial_x K(x,y)^2)$ and integrating \eqref{eq:weak-K} between $0$ and $x$, we obtain that for almost all $x \in (0,L)$:
    
    \begin{equation}
        \label{eq:first-estimate}
        \int_0^L \left[ |\partial_x K(x,y)|^2 + |\partial_y K(x,y)|^2 \right] dy \leq \alpha \int_0^L |K(x,y)|^2 dy + \int_0^x \int_0^L \left[ |\partial_x K(s,y)|^2 + |\partial_y K(s,y)|^2 \right] \,dy\,ds + \|f\|_{L^2((0,L)^2)}^2. 
    \end{equation} 

    \bfseries Case~1: \normalfont If $\alpha \leq 0$, an immediate Gronwall argument yields that for almost all $x \in (0,L)$:
    \begin{equation}
        \label{eq:case1}
        \int_0^L \left[ |\partial_x K(x,y)|^2 + |\partial_y K(x,y)|^2 \right] dy \leq e^x \|f\|_{L^2((0,L)^2)}^2 \leq e^L \|f\|_{L^2((0,L)^2)}^2.
    \end{equation}

    \bfseries Case~2: \normalfont If $\alpha >0$, we perform a change of variables: let us define, for all $\hat{x}\in (0, \alpha^{1/2}L)$,
    \[\hat{K}(\hat{x},y) = K(\alpha^{-1/2}\hat{x},y),\]
    such that 
    \[\partial_{x} \hat{K}(\hat{x}, y) = \alpha^{-1/2}\partial_x K(\alpha^{-1/2}\hat{x},y).\]
    Then for all $\hat{x} \in (0,\alpha^{1/2}L)$, rewrite \eqref{eq:first-estimate} with $x = \alpha^{-1/2} \hat{x}$ as
    \[
    \begin{split} \int_0^L \left[ |\partial_x K(\alpha^{-1/2} \hat{x},y)|^2 + |\partial_y K(\alpha^{-1/2} \hat{x},y)|^2 \right] dy \leq \alpha \int_0^L |K(\alpha^{-1/2} \hat{x},y)|^2 dy \\ 
    + \int_0^{\alpha^{-1/2} \hat{x}} \int_0^L \left[ |\partial_x K(s,y)|^2 + |\partial_y K(s,y)|^2 \right] \,dy\,ds    + \|f\|_{L^2((0,L)^2)}^2. 
    \end{split}
    \]
    Performing the change of variable $\hat{s} = \alpha^{1/2} s$ in the double integral on the right-hand side, and dividing everything by $\alpha$, one gets   
    \begin{equation}
        \label{eq:estimate-scaling}
        \begin{split}
            \int_0^L \left[ \left|\partial_{x} \hat{K}(\hat{x},y)\right|^2 + \alpha^{-1}\left|\partial_y \hat{K}(\hat{x},y)\right|^2 \right] dy & \leq \int_0^L \left|\hat{K}(\hat{x},y)\right|^2 dy +\\
            & \alpha^{-1/2}\int_0^{\hat{x}} \int_0^L \left[\left|\partial_x\hat{K}(\hat{s},y)\right|^2 + \alpha^{-1}\left|\partial_y \hat{K}(\hat{s},y)\right|^2\right] \,dy\,d\hat{s}\\
            +& \alpha^{-1}\|f\|_{L^2((0,L)^2)}^2.
        \end{split}
    \end{equation}
    Let us define for almost all $\hat{x} \in (0,\alpha^{1/2} L)$
    \begin{equation}
        \label{defV1V2}
        V_1(\hat{x}) = \int_0^L \left[ \left|\partial_x\hat{K}(\hat{x},y)\right|^2 + \alpha^{-1} \left|\partial_y\hat{K}(\hat{x},y)\right|^2 \right]dy ~ \text{and}~ V_2(\hat{x})=\int_0^L \left|\hat{K}(\hat{x},y)\right|^2dy.
    \end{equation}
    The previous estimate can be equivalently rewritten as: for almost all $\hat{x} \in (0,\alpha^{1/2} L)$
    \[ V_1(\hat{x}) \leq V_2(\hat{x}) + \alpha^{-1/2} \int_0^{\hat{x}} V_1(s)\,ds + \alpha^{-1} \|f\|_{L^2}^2. \]
    Notice also that
    \[V_2'(\hat{x})= 2 \int_0^L \partial_x\hat{K}(\hat{x},y) \hat{K}(\hat{x},y) dy \leq 2 V_1(\hat{x})^{1/2} V_2(\hat{x})^{1/2} \leq V_1(\hat{x}) +  V_2(\hat{x}), \]
    so that
    \[ V_1(\hat{x}) + V_2'(\hat{x}) \leq 3V_2(\hat{x}) + 2 \alpha^{-1/2} \int_0^{\hat{x}} V_1(s)ds + 2 \alpha^{-1} \|f\|_{L^2}^2. \]
    We are now in the position to use a Gronwall-type argument. Set $g(\hat{x}) := \int_0^{\hat{x}} V_1(s)\,ds + V_2(\hat{x})$. The previous estimate then reads as: for almost any $\hat{x} \in (0,\alpha^{1/2} L)$
    \[ g'(\hat{x}) \leq C_{\alpha} g(\hat{x}) + D_{\alpha} \|f\|_{L^2}^2. \]
    with
    \begin{align*}
        C_{\alpha} & := \max(3,2\alpha^{-1/2}), \\
        D_{\alpha} & := 2 \alpha^{-1}.
    \end{align*}
    Therefore,
    \begin{equation}
        \label{estimg}
        g(\hat{x}) \leq D_{\alpha} \|f\|_{L^2}^2 \hat{x} e^{C_{\alpha} \hat{x}}.
    \end{equation}

    Now rewrite \eqref{eq:estimate-scaling} in terms of $V_1,V_2$: for almost any $\hat{x} \in (0,\alpha^{1/2} L)$
    \begin{equation}
        \label{eq:exprV1V2}
        V_1(\hat{x}) = \int_0^L \left[ \left|\partial_x\hat{K}(\hat{x},y)\right|^2 + \alpha^{-1}\left| \partial_y K(\hat{x},y)\right|^2 \right] dy \leq V_2(\hat{x}) + \alpha^{-1/2} \int_0^{\hat{x}} V_1(s)\,ds + \alpha^{-1} \|f\|_{L^2}^2.
    \end{equation}
    Thus,
    \begin{align*}
        \alpha^{-1} \int_0^L \left[ |\partial_x K(\hat{x},y)|^2 + |\partial_y K(\hat{x},y)|^2 \right] dy & = V_1(\hat{x})                                                                                                    \\
                                                                                                         & \leq \max(1, \alpha^{-1/2}) g(\hat{x}) + \alpha^{-1}\|f\|_{L^2}^2                                                 \\
                                                                                                         & \leq \alpha^{-1}\|f\|_{L^2}^2\left( 1 + 2 \max(1, \alpha^{-1/2}) \hat{x} e^{C_\alpha \hat{x}} \right)             \\
                                                                                                         & \leq \alpha^{-1}\|f\|^2_{L^2}\left( 1 + 2 \max(\alpha^{1/2}, 1)L e^{\max(3 \alpha^{1/2},2)L} \right)              \\
                                                                                                         & \leq \alpha^{-1}\|f\|^2_{L^2}\left( \left(1 + 2 \max(\alpha^{1/2}, 1)L \right)e^{\max(3 \alpha^{1/2},2)L} \right) \\
                                                                                                         & \leq \alpha^{-1}\|f\|^2_{L^2} e^{3\max(3 \alpha^{1/2},2)L}.                                                       \\
    \end{align*}
    We thus finally obtain that for almost all $x\in (0,L)$
    \begin{equation}
        \label{eq:case2}
        \int_0^L \left[ |\partial_x K(x,y)|^2 + |\partial_y K(x,y)|^2 \right] dy \leq \|f\|^2_{L^2} e^{\max(6 \alpha^{1/2},4)L}.
    \end{equation}

    Therefore, combining \eqref{eq:case1} and \eqref{eq:case2} we have proven so far that, for $\alpha \in \R$,
    \begin{equation}
        \label{eq:gradient-estimates}
        \int_0^L \left[ |\partial_x K(x,y)|^2 + |\partial_y K(x,y)|^2 \right] dy \leq e^{C\max([\alpha]_+^{1/2}, 1)L} \|f\|_{L^2((0,L)^2)}^2.
    \end{equation}

    Thanks to the null initial conditions ($x=0$) it holds for almost any $(x,y) \in (0,L)^2$
    \[ K(x,y) = \int_0^x \partial_x K(z,y) dz \leq \sqrt{L} \sqrt{\int_0^L |\partial_x K(z,y)|^2 dz} \]
    Integrate over $y \in (0,L)$ the square of this inequality: for almost any $x \in (0,L)$
    \begin{equation}
        \label{eq:L^2}
        \int_0^L |K(x,y)|^2 dy \leq L \int_0^L \int_0^L |\partial_x K(z,y)|^2 dy dz \leq  L^2 e^{C\max([\alpha]_+^{1/2}, 1)L} \|f\|_{L^2((0,L)^2)}^2,
    \end{equation}
    where we used \eqref{eq:gradient-estimates} for the last inequality. Hence the result when $f$ is a smooth function.

    \medskip

    {\bfseries Step~2 (approximation):} Let us now turn to the case when $f\in L^2((0,L)^2)$. Then, there exists a sequence $(f_n)_{n\in \mathbb{N}}$ of functions in $C^{\infty}_c((0,L)^2)$ such that $\|f_n-f\|_{L^2((0,L)^2)} \xrightarrow{n \to \infty} 0$. Let us denote by $K_n$ the unique solution of \eqref{eq:complete} with $f=f_n$  for all $n\in \mathbb{N}$. By standard results on the wave equation (see \cite{evans2010}[Theorem 5,p.410]), there exists a constant $C>0$ independent of $n$ such that for all $n\in \mathbb{N}$
    $$
        \|K_n - K\|_{L^\infty( (0,L), H^1(0,L))} + \|\partial_x K_n - \partial_x K\|_{L^\infty((0,L), L^2(0,L))} \leq C \|f_n -f \|_{L^2((0,L))}.
    $$
    Thus, passing to the limit $n\to +\infty$ in the inequality
    $$
        \int_0^L \left(|K_n(x,y)|^2 + |\nabla K_n(x,y)|^2\right) dy \leq (1 + L^2)e^{C\max([\alpha]_+^{1/2}, 1)L} \|f_n\|_{L^2((0,L)^2)}^2,
    $$
    which holds for almost all $x\in (0,L)$ yields the desired result.

\end{proof}

\begin{lemma}
    \label{lem:localization}
    In the framework of Lemma~\ref{lem:square}, assume in addition that in \eqref{eq:complete}, $${\rm  Supp} ~ f \subset D_L:=\left\{ (x,y)\in (0,L)^2, \; 0<y \leq x <L\right\}.$$ Then it holds that
    \begin{equation}
        \label{localization}
        K(x,y) = 0 ~ \text{a.e in} ~ (0,L)^2 ~\backslash~ D_L.
    \end{equation}
\end{lemma}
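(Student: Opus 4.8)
The plan is to read \eqref{eq:complete} as a one‑dimensional wave equation in which $x$ plays the role of time and $y$ the role of space, with unit propagation speed, zero initial data at $x=0$, and a source $f$ supported in the lower triangle $D_L=\{0<y\le x<L\}$. The localization \eqref{localization} is then exactly a finite‑speed‑of‑propagation statement: the value of $K$ at a point $(x_0,y_0)$ with $y_0>x_0$ should depend only on the initial data on $\{x=0\}$ and on $f$ inside the backward characteristic cone with apex $(x_0,y_0)$, and both of these vanish. Since the weak solution provided by Lemma~\ref{lem:square} is a priori too rough to integrate by parts, I would first reduce to smooth data exactly as in Step~1 of the proof of Lemma~\ref{lem:square}: approximate $f$ in $L^2((0,L)^2)$ by $f_n\in C^\infty_c((0,L)^2)$ by mollification, so that the associated solutions $K_n$ enjoy $\partial_{xx}K_n\in L^\infty((0,L),L^2)$, $\partial_x K_n\in L^\infty((0,L),H^1)$, and $K_n\to K$ in $L^\infty((0,L),H^1(0,L))$ by the stability estimate recalled in Step~2. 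It is harmless that mollification may push $\mathrm{Supp}\,f_n$ slightly across the diagonal into $\{y\le x+\varepsilon_n\}$ with $\varepsilon_n\to 0$, because for a fixed apex with $y_0>x_0$ the relevant cone will stay at a positive distance $y_0-x_0$ from the diagonal.

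Fix such a point $(x_0,y_0)$ with $y_0>x_0$. I would run the standard energy estimate on the truncated backward cone. For $s\in(0,x_0)$ set $y_-(s)=y_0-x_0+s$ and $J(s)=\big[y_-(s),\min(y_0+x_0-s,L)\big]$, and define
\[ E(s)=\tfrac12\int_{J(s)}\Big(|\partial_x K_n|^2+|\partial_y K_n|^2+|K_n|^2\Big)(s,y)\,dy. \]
Differentiating in $s$ via the Leibniz rule, substituting $\partial_{xx}K_n-\partial_{yy}K_n=\alpha K_n$ (valid since $f_n=0$ on the cone for $n$ large, because there $y\ge y_-(s)=x+(y_0-x_0)>x+\varepsilon_n$), and integrating by parts in $y$, the interior terms collapse to $(\alpha+1)\int_{J(s)}K_n\,\partial_x K_n\,dy\le |\alpha+1|\,E(s)$. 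The contributions of the two moving characteristic endpoints combine into perfect squares of the favorable sign, namely $-\tfrac12(\partial_x K_n\mp\partial_y K_n)^2-\tfrac12|K_n|^2\le 0$, so they only help.

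The one place where the boundary of the strip $(0,L)$ can interfere is the top edge $y=L$, which the cone meets precisely when $y_0+x_0>L$; there the portion of $\partial J(s)$ lying on $\{y=L\}$ is fixed rather than characteristic, but its flux $\partial_y K_n(s,L)\,\partial_x K_n(s,L)$ vanishes by the Neumann condition $\partial_y K_n(x,L)=0$. The bottom edge $y=0$ is never reached, since $y_-(s)\ge y_0-x_0>0$. Hence in every case one obtains $E'(s)\le |\alpha+1|\,E(s)$, while $E(0)=0$ because $K_n(0,\cdot)=\partial_x K_n(0,\cdot)=0$ force $K_n,\partial_x K_n,\partial_y K_n$ all to vanish at $x=0$. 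Gronwall's lemma then gives $E\equiv 0$ on $(0,x_0)$, so $K_n$ vanishes throughout the cone and in particular $K_n(x_0,y_0)=0$. Passing to the limit $n\to\infty$ yields $K(x_0,y_0)=0$, and since $(x_0,y_0)$ was an arbitrary point with $y_0>x_0$ and $K$ is continuous on $[0,L]^2$ (as $K\in\mathcal C^0([0,L],H^1(0,L))\hookrightarrow\mathcal C^0([0,L]^2)$), we conclude $K=0$ on $(0,L)^2\setminus D_L$, which is \eqref{localization}.

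The main obstacle I anticipate is the bookkeeping at the boundary $y=L$ together with the reduced regularity: the clean cancellation of the top‑edge flux relies on the Neumann condition, and the whole energy computation is only licit for the smooth approximants, so the density argument (and the observation that the cone stays strictly below the diagonal) must be handled with care rather than applied to the weak solution directly. An alternative, essentially equivalent route would be to extend $K_n$ by even reflection across $y=0$ and $y=L$ to a wave equation on a larger $y$‑interval and invoke finite speed of propagation there, checking that the reflected copies of $\mathrm{Supp}\,f_n$ do not meet the cone; I find the direct truncated‑cone estimate slightly cleaner, since it uses the Neumann condition in situ.
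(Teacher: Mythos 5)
Your proposal is correct and is essentially the paper's proof: both read $x$ as a time variable and run an energy estimate in which the moving characteristic boundary terms collapse into non-positive perfect squares, the flux at $y=L$ is killed by the Neumann condition, Gronwall from the zero Cauchy data at $x=0$ forces the energy to vanish, and a density argument transfers the conclusion from smooth $f$ to $f\in L^2$. The only difference is the cross-section you integrate over — the truncated backward light cone of each individual point $(x_0,y_0)$ with $y_0>x_0$, rather than the paper's single global energy $E(x)=\tfrac12\int_x^L\bigl(K^2+|\nabla K|^2\bigr)\,dy$ over the whole super-diagonal slice, which disposes of all such points in one computation; your localized variant has the minor advantage of being manifestly insensitive to a mollification of $f$ leaking an $\varepsilon_n$-neighbourhood across the diagonal.
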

\begin{proof}

    Consider the restriction of the $H^1$ energy: for almost any $x \in (0,L)$,
    \[ E(x) := \frac{1}{2} \int_x^L \left(K^2(x,y) + (\partial_x K)^2(x,y) + (\partial_y K)^2(x,y)\right) dy.\]

    Assume first that $f$ is smooth in the sense that $\partial_x f \in L^2((0,L)^2)$.

    Then, the function $E$ is absolutely continuous, and it holds that
    \begin{align}
        \label{eq:diff-ener}
        E'(x) & = \int_x^L \left[ \partial_xK(x,y)K(x,y) + \partial_{xx}K(x,y) \partial_xK(x,y) + \partial_{xy}K \partial_y K(x,y)\right]\,dy \\ \nonumber
              & - \frac{1}{2}\left(K^2(x,x)+(\partial_xK)^2(x,x) + (\partial_y K)^2(x,x) \right).                                             \\ \nonumber
    \end{align}
    Integrating by parts the last term yields
    \[ \int_x^L (\partial_{xy}K(x,y) \partial_y K(x,y)) dy = -\int_x^L \partial_x K(x,y) \partial_{yy}K(x,y)\,dy - \partial_x K(x,x)\partial_y K(x,x).\]
    Using the fact that $K$ is a solution of \eqref{eq:complete}, and that ${\rm Supp} ~ f \subset D_{L}=\{0 < y \leq x < L \}$, we obtain:
    \begin{align*}
        E'(x) & = (\alpha+1)\int_x^L \partial_x K(x,y) K(x,y) dy - \frac{1}{2} K^2(x,x) - \frac{1}{2} \left((\partial_x K)^2(x,x) + (\partial_y K)^2(x,x) + 2 \partial_x K(x,x) \partial_y K(x,x)\right) \\
              & \leq (\alpha+1)\int_x^L \partial_x K(x,y) K(x,y) dy                                                                                                                                      \\
              & = -(\alpha+1) \int_x^L \partial_x K(x,y) \int_y^L \partial_y K(x,s) \,ds \,dy + (\alpha+1) K(x,L) \int_x^L \partial_x K(x,y)dy.
    \end{align*}
    The Cauchy-Schwarz inequality enables to bound the first term by $2|\alpha+1|(L-x)E(x)$ and the second term by $|\alpha+1||K(x,L)|\sqrt{L-x} (2E(x))^{1/2}$. Then, we use the one-dimensional Sobolev inequality on $y\rightarrow K(x,y)$ to deal with $K(x,L)$:
    \begin{align*}
        |K(x,L)| & \leq \sqrt{2} \max\left(\sqrt{L-x},\frac{1}{\sqrt{L-x}}\right) \left(\int_x^L K(x,y)^2 + (\partial_y K)^2(x,y) dy\right)^{1/2} \\
                 & \leq 2\max\left(\sqrt{L-x},\frac{1}{\sqrt{L-x}}\right) E(x)^{1/2}.
    \end{align*}
    Finally, we obtain that there exists $C>0$ such that for almost all $x\in (0,L)$,
    \[ E'(x) \leq C|\alpha+1| \max(1, L-x) E(x), \]
    and since $E(0)=0$ a Gronwall argument yields that $E(x) = 0$ for all $x\in [0,L]$. Hence \eqref{localization} holds.

    Lastly, reasoning as in Step~2 of the proof of Lemma~\ref{lem:square} by a density argument, we can easily check that the result holds true for arbitrary $f\in L^2((0,L)^2)$.
\end{proof}

We are now in position to prove Proposition~\ref{prop:estimates}.

\begin{proof}[Proof of Proposition~\ref{prop:estimates}]
    Let us begin to prove that there exists a unique weak solution $k^\alpha$ to \eqref{eq:kpde} in the sense of Definition~\ref{def:defkernel}, for any $\alpha \in \mathbb{R}$. Let $L>0$.

    \medskip

    \emph{Existence:} Denote by $K^\alpha(x,y):= k^\alpha(x,y) + \frac{\alpha}{2}x$ for all $(x,y)\in D_L$. Then, it holds that $K^\alpha$ is solution to
    \begin{equation}
        \label{eq:kpde2}
        \left\{
        \begin{aligned}
            \partial_{xx}^2 K^\alpha(x,y) - \partial_{yy}^2 K^\alpha(x,y) & = \alpha K^\alpha(x,y) - \frac{\alpha^2}{2}x & (x,y) \in D_L, \\
            \partial_y K^\alpha(x,0)                                      & = 0                                          & x \in (0,L),   \\
            K^\alpha(x,x)                                                 & = 0                                          & x \in (0,L),
        \end{aligned}
        \right.
    \end{equation}
    and it is equivalent to solve one problem or the other. Now, using Lemmas~\ref{lem:square} and~\ref{lem:localization}, we obtain that the restriction of the unique weak solution $K = \widetilde{K}^\alpha$ to \eqref{eq:complete} with $f(x,y) = f^\alpha(x,y)= - \frac{\alpha^2}{2}x \mathds{1}_{D_L}(x,y)$ to $D_L$ is a solution $K^\alpha$ to \eqref{eq:kpde2}. Besides, from Lemma~\ref{lem:localization}, it holds that $K = 0$ in $(0,L)^2 \setminus D_L$. In particular, it holds that $K$ satisfies the weak formulation: for almost all $x\in (0,L)$ and all $v\in H^1(0,L)$,
    \begin{align*}
         & \langle \partial_{xx} K(x,\cdot),v \rangle_{H^1(0,L)', H^1(0,L)} + \int_0^x \partial_y K(x,y) \partial_y v(y)\,dy \\
         & = \alpha \int_0^x \left(K(x,y) - \frac{\alpha}{2}x\right) v(y)\,dy.
    \end{align*}
    As a consequence, for all $w\in H^1(0,L)$, it holds that
    \begin{equation}
        \label{eq:integr-weak}
    \begin{aligned}
         & \int_0^L \langle \partial_{xx} K(x,\cdot),v \rangle_{H^1(0,L)', H^1(0,L)} w(x)\,dx + \int_0^L \int_0^x \partial_y K(x,y) \partial_y v(y)\,dy \,  w(x)\,dx \\
         & = \alpha
        \int_0^L \left(\int_0^x \left(K(x,y) - \frac{\alpha}{2}x\right) v(y)\,dy\right) w(x)\,dx.
    \end{aligned}
\end{equation}
    Using the fact that
    \begin{align*}
        \int_0^L \langle \partial_{xx} K(x,\cdot),v \rangle_{H^1(0,L)', H^1(0,L)} w(x)\,dx & = - \int_0^L \left(\int_0^L \partial_{x} K(x,y),v(y)\,dy \right) \partial_x w(x)\,dx \\
         & + w(L)\int_0^L \partial_x K(L,y)v(y)\,dy                                             \\
         & - w(0)\int_0^L \partial_x K(0,y)v(y)\,dy,                                            \\
    \end{align*}
    together with the fact that
    $$
        \partial_x K(0,y) = 0
    $$
    we obtain that
    \begin{align*}
        \int_0^L \langle \partial_{xx} K(x,\cdot),v \rangle_{H^1(0,L)', H^1(0,L)} w(x)\,dx & = - \int_0^L \left(\int_0^L \partial_{x} K(x,y)v(y)\,dy \right) \partial_x w(x)\,dx \\
        & + w(L)\int_0^L \partial_x K(L,y)v(y)\,dy,                                           \\
        & = - \int_0^L \left(\int_0^x \partial_{x} K(x,y)v(y)\,dy \right) \partial_x w(x)\,dx \\
        & + w(L)\int_0^L \partial_x K(L,y)v(y)\,dy.                                           \\
    \end{align*}
Combining the previous equality with \eqref{eq:integr-weak} gives
    \begin{align*}
        & - \int_0^L \left(\int_0^x \partial_{x} K v(y)\,dy \right) \partial_x w(x)\,dx  + w(L)\int_0^L \partial_x K^\alpha(L,y)v(y)\,dy \\
        & + \int_0^L \int_0^x \partial_y K(x,y) \partial_y v(y)\,dy  w(x)\,dx                                                                \\
        & = \alpha
        \int_0^L \left(\int_0^x \left(K(x,y) - \frac{\alpha}{2}x\right) v(y)\,dy\right) w(x)\, dx.                                                                                                                    
   \end{align*}

    Finally, since $k^\alpha= K - \frac{\alpha}{2}x$ in $D_L$, we obtain that $k^\alpha$ is solution to the following weak formulation: for all $v,w\in H^1(0,L)$,
    \begin{align*}
         & - \int_0^L \left(\int_0^x \partial_{x} k^\alpha(x,y)v(y)\,dy \right) \partial_x w(x)\,dx  + w(L)\int_0^L \partial_x k^\alpha(L,y)v(y)\,dy \\
         & + \int_0^L \int_0^x \partial_y k^\alpha(x,y) \partial_y v(y)\,dy  w(x)\,dx                                                                \\
         & = \alpha
        \int_0^L \left(\int_0^x k^\alpha(x,y) v(y)\,dy\right) w(x)\,dx                                                                               \\
         & - \frac{\alpha}{2}\int_0^L \left(\int_0^x v(y)\,dy \right) \partial_x w(x)\,dx  + \frac{\alpha}{2} w(L)\int_0^L v(y)\,dy                  \\
         & = \alpha
        \int_0^L \left(\int_0^x k^\alpha(x,y) v(y)\,dy\right) w(x)\,dx                                                                               \\
         & + \frac{\alpha}{2}\int_0^L  v(x) w(x)\,dx ,                                                                                            
    \end{align*}

where the last equality follows from integration by parts in the $x$ variable. We thus obtain the existence of a weak solution to \eqref{eq:kpde} in the sense of Definition~\ref{def:defkernel}.

    \medskip

    \emph{Uniqueness:} Let us now prove the uniqueness of the solution for this problem. Assume there exist two solutions $k^\alpha_1$ and $k^\alpha_2$ and denote by $\hat{k}:= k^\alpha_1 - k^\alpha_2$ their difference. Then $\hat{k}$ satisfies the homogeneous equation associated to \eqref{eq:kpde}. Since $\hat{k}$ has null trace on the diagonal $x=y$, it can be extended by $0$ to the square $(0,L)^2$. But then one can check that it satisfies the assumptions of Lemma~\ref{lem:square} without source term. Hence $\hat{k}=0$. Uniqueness is proved.

    \medskip

    \emph{Estimates:} Furthermore, Lemma~\ref{lem:square} yields the following estimate for almost all $x\in (0,L)$
    \begin{equation}
        \label{eq:H1-estimate2}
        \int_0^L \left(|\widetilde{K}^\alpha(x,y)|^2 + |\nabla \widetilde{K}^\alpha(x,y)|^2\right) dy \leq (1 + L^2)e^{C\max([\alpha]_+^{1/2}, 1)L} \|f^\alpha\|_{L^2((0,L)^2)}^2.
    \end{equation}
    This yields that for almost all $x\in (0,L)$,
    \begin{equation}
        \label{eq:H1-estimate3}
        \int_0^x \left(|K^\alpha(x,y)|^2 + |\nabla K^\alpha(x,y)|^2\right) dy \leq (1 + L^2)e^{C\max([\alpha]_+^{1/2}, 1)L} \|f^\alpha\|_{L^2((0,L)^2)}^2,
    \end{equation}
    Since $\|f^\alpha\|_{L^2((0,L)^2)}^2 \leq (\alpha L)^4$, and $K^\alpha(x,y) = k^\alpha(x,y) - \frac{\alpha}{2}x$, we obtain that
    \begin{align*}
        \int_0^x \left(|k^\alpha(x,y)|^2 + |\nabla k^\alpha(x,y)|^2\right) dy & \leq 2\int_0^x \left(|K^\alpha(x,y)|^2 + |\nabla K^\alpha(x,y)|^2\right) dy + 2\left[ \frac{\alpha^2}{12}x^3 + x \frac{\alpha^2}{4}\right], \\
                                                                              & \leq C_0\left( \alpha^2 (L^3 +L) + \alpha^4 L^4 (1 + L^2)e^{C\max([\alpha]_+^{1/2}, 1)L}\right),
    \end{align*}
    where $C_0>0$ is a constant independent of $\alpha$ and $L$.

    \medskip

    We are now in a position to conclude the proof of Proposition~\ref{prop:estimates}. For any $\lambda,\sigma >0$, we define $k_\lambda^\sigma$ a weak solution to \eqref{eq:kernel_eq} as follows: for all $t\geq 0$, $k_\lambda^\sigma|_{D_t}$ is defined as $k^\alpha$ with $L = \overline{e}(t)$ and $\alpha = \frac{\lambda}{\sigma}$. One can easily check from the previous results that $k_\lambda^\sigma$ is thus well-defined and unique and is a weak solution to \eqref{eq:kernel_eq}. $l_\lambda^\sigma$ is defined from $k_\lambda^\sigma$ according to \eqref{eq:kernel-equiv}. To get the desired estimates, it is now sufficient to apply the previously obtained estimates with $L = \be(t)$ and $\alpha = \frac{\lambda}{\sigma}>0$ for $k^\sigma_\lambda$ and $\alpha = - \frac{\lambda}{\sigma}<0$ for $l^\sigma_\lambda$. To this aim, we consider $\lambda \geq \lambda_\sigma := \sigma$. Taking into account the fact that $\be(t) \geq \be_0$ for all $t\geq 0$ then yields the existence of constants $c,C>0$ independent of $t$, $\lambda$ and $\sigma$ such that
    $$
        \int_0^x \left(|k^\sigma_\lambda(x,y)|^2 + |\nabla k^\sigma_\lambda(x,y)|^2\right) dy
        \leq Ce^{c\be(t)\sqrt{\lambda/\sigma}},
    $$
    and
    $$
        \int_0^x \left(|l^\sigma_\lambda(x,y)|^2 + |\nabla l^\sigma_\lambda(x,y)|^2\right) dy
        \leq C\left( \frac{\lambda}{\sigma}\right)^4e^{c\be(t)}.
    $$
    Hence, \eqref{k-estimate} and \eqref{l-estimate} hold.

\end{proof}

\subsection{Proofs of Lemmas~\ref{lem:map} and \ref{lem:back-map}}
\label{proof:lems}
We begin with the following lemma, from which we will easily deduce Lemmas~\ref{lem:map} and \ref{lem:back-map}. The sets $D^{\rm targ}$ and $D^{\rm ini}$ are respectively defined before Definitions \ref{def:weak-target} and \ref{def:weak-solution3}.

\begin{lemma}
    \label{lem:duality}
    Let $\sigma, \lambda, \tau_1$ and some initial conditions $\zeta_\lambda^{\sigma,\tau_1},g_\lambda^{\sigma,\tau_1}$ be defined as in Lemmas~\ref{lem:map} and \ref{lem:back-map}. Assume that some functions $\zeta,g \in \left[L^2(0,T;H^1) \right]_{\be}$ such that $\partial_t \zeta, \partial_t g \in \left[ L^2(0,T;(H^1)') \right]_{\be}$ are related to each other by the relation: for any $t \geq \tau_1$, $g(t) = \mathcal{T}_{\lambda,t}^\sigma \zeta(t)$ (or, equivalently, from Lemma~\ref{lem:inv-map}, $\zeta(t) =  \mathcal{T}_{\lambda,t}^{\sigma,\rm inv} g(t)$).
    Then the following assertions hold:
    \begin{itemize}
        \item[i)] The linear operator $\mathcal{G}: \left[ L^2((0,T), L^2)\right]_{\overline{e}} \to \left[ L^2((0,T), L^2)\right]_{\overline{e}}$ defined  for any $f \in \left[ L^2((0,T), L^2)\right]_{\overline{e}}$ by
              \begin{equation}
                  \label{eq:adjoint}
                  \mathcal{G} f(t,y) = f(t,y) - \int_y^{\be(t)} k_\lambda^\sigma(x,y) f(t,x) \,dx, \quad \mbox{ for a.a. } t\in (0,T), \; x\in (0, \be(t)) ,
              \end{equation}
              is invertible from $D^{\rm targ}$ to $D^{\rm ini}$.
        \item[ii)] For any test function $v \in D^{\rm targ}$, it holds
              \begin{equation}
                  \label{eq:duality-1}
                  a^{\rm targ}(g,v) = a^{\rm ini}(\zeta,\mathcal{G}v),
              \end{equation}
              where $a^{\rm ini}$ and $a^{\rm targ}$ are given respectively by \eqref{eq:defaini} and \eqref{eq:defatar}.
        \item[iii)] As a consequence of $i)$ and $ii)$, for any test function $w \in D^{\rm ini}$, it holds
              \begin{equation}
                  \label{eq:duality-2}
                  a^{\rm targ}(g,\mathcal{G}^{-1} w) = a^{\rm ini}(\zeta,w).
              \end{equation}
    \end{itemize}
\end{lemma}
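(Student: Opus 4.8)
The plan is to recognize $\mathcal{G}$ as the $L^2$-adjoint of the backstepping transform and to transfer all the analysis onto the (smooth) test functions, so that the low regularity of $\zeta$ and $g$ never enters. First I would record the pointwise-in-time adjoint identity: for every $\phi \in L^2(0,\be(t))$, Fubini's theorem applied on the triangle $D_t = \{0<y\le x<\be(t)\}$ gives
\[
\int_0^{\be(t)} (\mathcal{T}_{\lambda,t}^\sigma \xi)(x)\,\phi(x)\,dx = \int_0^{\be(t)} \xi(y)\,(\mathcal{G}_t\phi)(y)\,dy,
\]
where $\mathcal{G}_t\phi(y):=\phi(y)-\int_y^{\be(t)}k^\sigma_\lambda(x,y)\phi(x)\,dx$ is the time-slice of $\mathcal{G}$. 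Since $g(t)=\mathcal{T}_{\lambda,t}^\sigma\zeta(t)$, this turns every spatial pairing against $g$ appearing in $a^{\rm targ}$ into a pairing against $\zeta$ with $\mathcal{G}_t$ acting on the test function, and the time-traces at $\tau_1$ (whence $g^{\sigma,\tau_1}=\mathcal{T}_{\lambda,\tau_1}^\sigma\zeta^{\sigma,\tau_1}$) are well-defined by the assumed regularity and Aubin--Lions.

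The computational heart is (ii), namely the conjugation identity
\[
\mathcal{G}_t\big(\partial_t v + \sigma\partial_{xx}^2 v - \lambda v\big) = \partial_t(\mathcal{G}_t v) + \sigma\partial_{yy}^2(\mathcal{G}_t v) + \big[\sigma\partial_x k^\sigma_\lambda(\be(t),\cdot) + \bv\, k^\sigma_\lambda(\be(t),\cdot)\big]\,v(t,\be(t)),
\]
valid for $v\in D^{\rm targ}$. I would establish it in two pieces. For the time part, differentiating $\mathcal{G}_t v$ in $t$ produces, via the Leibniz rule applied to the moving upper endpoint $\be(t)$ (with $\be'(t)=\bv$), the extra contribution $-\bv\, k^\sigma_\lambda(\be(t),\cdot)v(t,\be(t))$, so that $\partial_t(\mathcal{G}_t v)=\mathcal{G}_t(\partial_t v)-\bv\, k^\sigma_\lambda(\be(t),\cdot)v(t,\be(t))$. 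For the spatial part, I would integrate $\int_y^{\be(t)}k^\sigma_\lambda(x,y)\,\sigma\partial_{xx}^2 v(x)\,dx$ by parts twice, substitute the kernel equation \eqref{eq:kernel_eq} (which cancels the $\lambda\!\int k^\sigma_\lambda v$ terms), and use the diagonal identity $\frac{d}{dx}k^\sigma_\lambda(x,x)=-\frac{\lambda}{2\sigma}$ together with $\sigma\partial_x v(t,\be(t))=0$ (condition (v) of $D^{\rm targ}$) to reduce the boundary data at $x=\be(t)$ to the single term $\sigma\partial_x k^\sigma_\lambda(\be(t),\cdot)v(t,\be(t))$. Integrating the sum in $t$, noting $(\mathcal{G}_t v)(\be(t))=v(t,\be(t))$ because the defining integral is over an empty interval, and recognizing $\int_0^{\be(t)}[\sigma\partial_x k^\sigma_\lambda(\be(t),y)+\bv\, k^\sigma_\lambda(\be(t),y)]\zeta(t,y)\,dy = H_{nl,\lambda}^\sigma(t)\zeta(t)$ from \eqref{eq:operators}, together with the adjoint identity at $t=\tau_1$ for the initial slice, yields exactly $a^{\rm targ}(g,v)=a^{\rm ini}(\zeta,\mathcal{G}v)$.

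For (i), I would first check that $\mathcal{G}$ maps $D^{\rm targ}$ into $D^{\rm ini}$: the spatial regularity and the terminal condition $v(T,\cdot)=0$ are preserved since $k^\sigma_\lambda$ is smooth (indeed $C^2$, cf. the remark after Proposition~\ref{prop:estimates}); the Neumann condition at $0$ follows from $k^\sigma_\lambda(0,0)=0$ and $\partial_y k^\sigma_\lambda(x,0)=0$; and the Robin condition (v) of $D^{\rm ini}$ holds because $\partial_y(\mathcal{G}_t v)(\be(t))=k^\sigma_\lambda(\be(t),\be(t))v(t,\be(t))$, so that with $K_{l,\lambda}^\sigma(t)=\sigma k^\sigma_\lambda(\be(t),\be(t))$ one gets $K_{l,\lambda}^\sigma(t)(\mathcal{G}_t v)(\be(t))-\sigma\partial_x(\mathcal{G}_t v)(\be(t))=0$. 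Invertibility comes from taking the adjoint of Lemma~\ref{lem:inv-map}: the inverse of $\mathcal{G}_t$ is the adjoint of $\mathcal{T}_{\lambda,t}^{\sigma,\rm inv}$, namely $\eta\mapsto\eta(\cdot)+\int_\cdot^{\be(t)}l^\sigma_\lambda(x,\cdot)\eta(x)\,dx$. Repeating the boundary computation with $l^\sigma_\lambda$ and using $\partial_y l^\sigma_\lambda(x,0)=0$, $l^\sigma_\lambda(0,0)=0$, and crucially $l^\sigma_\lambda(\be(t),\be(t))=k^\sigma_\lambda(\be(t),\be(t))=-\frac{\lambda}{2\sigma}\be(t)$ shows this inverse maps $D^{\rm ini}$ back into $D^{\rm targ}$, giving the asserted bijection. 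Finally (iii) is immediate: given $w\in D^{\rm ini}$, apply (ii) to $v:=\mathcal{G}^{-1}w\in D^{\rm targ}$.

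The main obstacle will be the careful bookkeeping in the conjugation identity of (ii): one must track two distinct sources of boundary terms at $x=\be(t)$---the moving-boundary contribution $\bv\, k^\sigma_\lambda(\be(t),\cdot)$ from the Leibniz rule in time and the $\sigma\partial_x k^\sigma_\lambda(\be(t),\cdot)$ term from integrating by parts in space---and verify that they assemble into precisely the kernel of $H_{nl,\lambda}^\sigma$, while all interior terms cancel by virtue of the kernel PDE \eqref{eq:kernel_eq} and the diagonal condition. A secondary technical point is to ensure these manipulations, although written for smooth $v$, are legitimate; this is the case because $v\in D^{\rm targ}$ is $H^2$ in space with $\partial_t v\in L^2$ and $k^\sigma_\lambda$ is $C^2$, whereas $\zeta$ and $g$ enter only through $L^2$ pairings, so their weak regularity is irrelevant.
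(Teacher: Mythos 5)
Your proof is correct, and it reaches the same identities as the paper, but it organizes the computation differently in a way worth noting. The paper proves (ii) by a two-sided expand-and-match: it writes $g=\zeta-\phi$ with $\phi(t,x)=\int_0^x k(x,y)\zeta(t,y)\,dy$, integrates the $\phi$-terms by parts in time and space inside $a^{\rm targ}(g,v)$ (which forces it to handle duality pairings $\langle \partial_t\zeta(t),k(x,\cdot)\rangle_{(H^1)',H^1}$ since $\partial_t\zeta$ only lives in $(H^1)'$), then separately expands $a^{\rm ini}(\zeta,\mathcal{G}v)$ with $\mathcal{G}v=v-\psi$ and matches the two long expressions term by term. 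You instead prove a single operator conjugation identity
\[
\mathcal{G}_t\bigl(\partial_t v+\sigma\partial_{xx}^2v-\lambda v\bigr)=\partial_t(\mathcal{G}_tv)+\sigma\partial_{yy}^2(\mathcal{G}_tv)+\bigl[\sigma\partial_xk^\sigma_\lambda(\be(t),\cdot)+\bv\,k^\sigma_\lambda(\be(t),\cdot)\bigr]v(t,\be(t)),
\]
entirely on the (regular) test-function side, and then pair with $\zeta$; the $\lambda$-terms cancel exactly as you say via the kernel PDE and the diagonal condition, and the two boundary contributions (the $\bv k(\be(t),\cdot)$ from the Leibniz rule at the moving endpoint and the $\sigma\partial_x k(\be(t),\cdot)$ from the spatial integration by parts) assemble into the kernel of $H^\sigma_{nl,\lambda}$. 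This buys a cleaner derivation of (ii) in one line and genuinely keeps the low regularity of $\zeta,g$ out of the argument, since they only ever appear in $L^2$ pairings. Your treatment of (i) and (iii), including the boundary checks at $y=0$ and $y=\be(t)$ and the identification of $\mathcal{G}_t^{-1}$ as the adjoint of $\mathcal{T}^{\sigma,\rm inv}_{\lambda,t}$, coincides with the paper's. The one caveat: your spatial computation implicitly uses pointwise second derivatives of $k^\sigma_\lambda$; this is licensed by the $C^2$ regularity recalled in the remark after Proposition~\ref{prop:estimates} (via \cite{liu2003boundary}), but note the paper deliberately avoids it and instead runs the same step through the weak formulation of Definition~\ref{def:defkernel}, so if you want the argument to extend to space-dependent coefficients you should phrase that integration by parts in the weak form as the paper does.
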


\begin{proof}
    Let $\lambda, \sigma >0$. To simplify, we denote in the sequel $k:= k_\lambda^\sigma$.
    \begin{itemize}

        \item[i)]  Let $v \in D^{\rm targ}$ and differentiate \eqref{eq:adjoint}. It holds for almost any $t \geq 0, y \in (0,\be(T))$,
              \begin{equation}
                  \partial_y(\mathcal{G} v)(t,y) = \partial_y v(t,y) + k(y,y)v(t,y) - \int_y^{\be(t)} \partial_y k(x,y) v(t,x) dx
              \end{equation}
              The previous equality holds in $\left[L^2(0,T;L^2)\right]_{\be}$ since
              $$
                  v \in \left[L^2(0,T;H^2) \right]_{\be} \subset \left[L^2(0,T;L^\infty) \right]_{\be},
              $$
              and the function $(0,\be(t)) \ni y \mapsto k(y,y)$ belongs to $H^1(0,\be(t)) $. Besides, the quantity  $\left\|\partial_y k(x,\cdot)\right\|_{L^2(0,x)}$ is bounded uniformly in $x$ for $x\in (0,\be(t))$.
              Differentiate once again:
             \begin{align*}
                  \partial_{yy}^2 (\mathcal{G} v)(t,y) = & \partial_{yy}^2 v(t,y) + \left(\frac{d}{dy} k(y,y) \right) v(t,y) + k(y,y) \partial_y v(t,y) + \partial_y k(y,y) v(t,y) \\
                                                         & - \langle \partial_{yy}^2 k(\cdot,y), v(t) \rangle_{(H^1(y,\overline{e}(t)))',H^1(y,\overline{e}(t))}.
              \end{align*}
              All the terms on the right-hand-side belong to $\left[L^2(0,T;L^2)\right]_{\be}$. Therefore $\mathcal{G} v \in \left[L^2(0,T;H^2) \right]_{\be}$. It is then clear that $$\mathcal G v \in D^{\rm ini},$$ since in particular
              \[ \sigma \partial_y (\mathcal{G} v)(t,\be(t)) = 0 + \sigma k(\be(t),\be(t))v(t,\be(t)) - 0 = K_l(t)(\mathcal G v)(t,e(t)). \]

              Therefore the range of $\mathcal{G}$ is a subset of $D^{\rm ini}$. Besides, $\mathcal{G}$ is invertible in $L^2$ from classical results on Volterra operators (see Lemma~\ref{lem:inv-map}). Finally, just as in Lemma~\ref{lem:inv-map}, it can be easily checked following the same lines that the inverse has a similar form, and that it is defined from $D^{\rm ini}$ with values in $D^{\rm targ}$.

        \item[ii)]Let $v \in D^{\rm targ}$. It holds, denoting by $\phi(t,x):= \int_0^x k(x,y)\zeta(t,y)\,dy$,
              \begin{equation}
                  \begin{split}
                      \label{eq:lem3intpart}
                      &a^{\rm targ}(g,v) = \int_{\tau_1}^{T}
                      \left\langle \partial_t v(t) + \sigma \partial_{xx}^2 v(t) - \lambda v(t), g(t)\right\rangle_{H^1(0,\be(t))', H^1(0,\be(t)} dt+ \int_0^{\be(\tau_1)}g(\tau_1,x) v(\tau_1,x) dx \\
                      &=\int_{\tau_1}^{T} \left\langle  \partial_t v(t) + \sigma \partial_{xx}^2 v(t) - \lambda v(t), \zeta(t)\right\rangle_{H^1(0,\be(t))', H^1(0,\be(t))} dt\\
                      & -\int_{\tau_1}^{T} \left\langle\partial_t v(t) + \sigma \partial_{xx}^2 v(t) - \lambda v(t),  \phi(t) \right\rangle_{H^1(0,\be(t))', H^1(0,\be(t))}
                      dt\\
                      +&\int_0^{\be(\tau_1)}\zeta(\tau_1,x) v(\tau_1,x) dx\\
                      &-\int_0^{\be(\tau_1)}\left(\int_{0}^{x}k(x,y)\zeta(\tau_1,y) dy\right)v(\tau_1,x) dx
                  \end{split}
              \end{equation}

              Let us now look at the term in \eqref{eq:lem3intpart} involving the function $\phi$ and perform some integration by parts. Begin with the time derivative: it holds,
              \begin{align*}
                   & \int_{\tau_1}^T
                  \left\langle \partial_t v(t), \phi(t) \right\rangle_{H^1(0,\be(t))', H^1(0,\be(t))} dt                                       \\
                   & = - \int_{\tau_1}^T
                  \left\langle \partial_t \phi(t), v(t) \right\rangle_{H^1(0,\be(t))', H^1(0,\be(t))} dt + \int_0^ {\overline{e}(\tau_1)} v(\tau_1,x)\phi(\tau_1,x)\,dx
                  \\
                   & - \overline{v} \int_{\tau_1}^T
                  \phi(t,\be(t))v(t,\be(t))                                                                                                    \\
                   & = - \int_{\tau_1}^T \int_0^{\be(t)}  \langle \partial_t \zeta(t), k(x,\cdot)\rangle_{H^{1}(0,x)', H^1(0,x)}  v(t,x) dx dt \\
                   & - \int_0^{\be(\tau_1)} \left(\int_0^x k(x,y)\zeta(\tau_1,y)  dy \right) v(\tau_1,x) dx                                    \\
                   & - \overline{v} \int_{\tau_1}^T
                  \phi(t,\be(t))v(t,\be(t)).                                                                                                   \\
              \end{align*}

              Now the space derivative:
              \begin{align*}
                   & \int_{\tau_1}^T
                  \left\langle \sigma \partial_{xx}^2 v(t), \phi(t)\right\rangle_{H^1(0,\be(t))', H^1(0,\be(t))} dt                                    \\
                   & = - \int_{\tau_1}^T
                  \int_0^{\be(t)} \left( k(x,x) \zeta(t,x) + \int_0^x \partial_x k(x,y) \zeta(t,y) dy \right) \sigma \partial_x v(t,x)dx dt            \\
                   & = \sigma \int_{\tau_1}^T \int_0^{\be(t)} \left[ \frac{d}{dx} k(x,x) \zeta(t,x) + k(x,x)\partial_x \zeta(t,x) \right] v(t,x) dx dt \\
                   & - \int_{\tau_1}^T v(t,\be(t)) \left[ \sigma k(\be(t),\be(t))\zeta(t,\be(t)) \right] dt                                            \\
                   & - \sigma \int_{\tau_1}^T  \int_0^{\be(t)} \left( \int_0^x \partial_x k(x,y) \zeta(t,y) dy \right)  \partial_x v(t,x)dx dt.        \\
              \end{align*}
              Using the weak formulation of $k$, we obtain that for all $t\geq \tau_1$
              \begin{align*}
                   & - \sigma  \int_0^{\be(t)} \left( \int_0^x \partial_x k(x,y) \zeta(t,y) dy \right)  \partial_x v(t,x)dx dt \\
                   & = - v(t, \overline{e}(t)) \int_0^{\be(t)} \sigma  \partial_x k(\be(t), y)\zeta(t,y)\,dy                   \\
                   & - \sigma \int_0^{\be(t)} v(t,x)
                  \left(\int_0^x \partial_y k(x,y) \partial_y \zeta(t,y)\,dy\right) \,dx                                       \\
                   & + \lambda \int_0^{\be(t)} \int_0^x k(x,y) \zeta(t,y)\,dy v(t,x)\,dx                                       \\
                   & - \frac{\lambda}{2} \int_0^{\be(t)}  \zeta(t,x)\ v(t,x)\,dx.
              \end{align*}
              Remember that:
              \[  H_{nl}(t)\zeta(t) = \int_0^{\be(t)} \left[\sigma \partial_x k(\be(t),y) + \bv k(\be(t),y)\right] \zeta(t,y) dy, \]
              and now insert the two previous calculations into \eqref{eq:lem3intpart}. It holds that:
              \footnotesize
              \begin{align*}
                  a^{\rm targ}(g,v) & = \int_{\tau_1}^T \left\langle \partial_t v(t) + \sigma \partial_{xx}^2 v(t) - \lambda v(t), \zeta(t) \right\rangle_{H^1(0,\be(t))', H^1(0,\be(t))} + \int_{\tau_1}^T H_{nl}(t) \zeta(t) v(t,\be(t)) dt \\
                                    & + \int_{\tau_1}^T \int_0^{\be(t)} \langle \partial_t \zeta(t), k(x,\cdot)\rangle_{(H^{1}(0,x))', H^1(0,x)} v(t,x) dx dt                                                                                 \\
                                    & - \sigma \int_{\tau_1}^T \int_0^{\be(t)} \left[ \frac{d}{dx} k(x,x) \zeta(t,x) + k(x,x)\partial_x \zeta(t,x)\right] v(t,x) dx dt                                                                        \\
                                    & + \sigma \int_{\tau_1}^T \int_0^{\be(t)}
                  \left(\int_0^x \partial_y k(x,y) \partial_y \zeta(t,y)\,dy\right) v(t,x)\,dx \, dt                                                                                                                                          \\
                                    & +\frac{\lambda}{2}\int_{\tau_1}^T  \int_0^{\be(t)}  \zeta(t,x) v(t,x)\,dx                                                                                                                               \\
                                    & + \int_{\tau_1}^T v(t,\be(t)) K_l(t)\zeta(t,\be(t)) dt + \int_0^{\be(\tau_1)}\zeta(\tau_1,x) v(\tau_1,x) dx.                                                                                            \\
              \end{align*}
              \normalsize
              Note that we have
              $$
                  \frac{\lambda}{2}\int_0^{\be(t)} \zeta(t,x) v(t,x)\,dx
                  = -\sigma \int_{0}^{\be(t)} \frac{d}{dx}k(x,x) \zeta(t,x) v(t,x)\,dx.
              $$
              Hence, we obtain that
              \begin{align*}
                  a^{\rm targ}(g,v) & = \int_{\tau_1}^T \left\langle \partial_t v(t) + \sigma \partial_{xx}^2 v(t) , \zeta(t) \right\rangle_{H^1(0,\be(t))', H^1(0,\be(t))} + \int_{\tau_1}^T H_{nl}(t) \zeta(t) v(t,\be(t)) dt \\
                                    & + \int_{\tau_1}^T \int_0^{\be(t)} \langle \partial_t \zeta(t), k(x,\cdot)\rangle_{(H^{1}(0,x))', H^1(0,x)} v(t,x) dx dt                                                                   \\
                                    & - \sigma \int_{\tau_1}^T \int_0^{\be(t)}  k(x,x)\partial_x \zeta(t,x) v(t,x) dx dt                                                                                                        \\
                                    & + \sigma \int_{\tau_1}^T \int_0^{\be(t)}
                  \left(\int_0^x \partial_y k(x,y) \partial_y \zeta(t,y)\,dy\right) v(t,x)\,dx \, dt                                                                                                                            \\
                                    & + \int_{\tau_1}^T v(t,\be(t)) K_l(t)\zeta(t,\be(t)) dt + \int_0^{\be(\tau_1)}\zeta(\tau_1,x) v(\tau_1,x) dx.                                                                              \\
              \end{align*}

              Let us now denote by $w:= \mathcal{G} v$ and by $\psi(t,y):= \int_y^{\overline{e}(t)} k(x,y) v(t,x)\,dx$. It then holds that
              \begin{align*}
                  a^{\rm ini}(\zeta, w) & := \int_{\tau_1}^T
                  \left\langle \partial_t w(t) + \sigma \partial_{xx}^2 w(t),  \zeta(t)
                  \right\rangle_{H^1(0,\overline{e}(t))',H^1(0,\overline{e}(t))}  dt + \int_{\tau_1}^T H_{nl}(t) \zeta(t) w(t,\be(t)) dt      \\
                                        & +\int_0^{\be(\tau_1)}\zeta(\tau_1,x) w(\tau_1,x) dx,                                                \\
                                        & = \int_{\tau_1}^T
                  \left\langle \partial_t v(t) + \sigma \partial_{xx}^2 v(t),  \zeta(t)
                  \right\rangle_{H^1(0,\overline{e}(t))',H^1(0,\overline{e}(t))}  dt + \int_{\tau_1}^T H_{nl}(t) \zeta(t) v(t,\be(t)) dt      \\
                                        & +\int_0^{\be(\tau_1)}\zeta(\tau_1,x) v(\tau_1,x) dx                                                 \\
                                        & - \int_{\tau_1}^T
                  \left\langle \partial_t \psi(t) + \sigma \partial_{xx}^2 \psi(t),  \zeta(t)
                  \right\rangle_{H^1(0,\overline{e}(t))',H^1(0,\overline{e}(t))}  dt  - \int_0^{\be(\tau_1)}\zeta(\tau_1,x) \psi(\tau_1,x) dx \\
              \end{align*}
              Doing similar computations as above, we obtain that
              \begin{align*}
                  \int_{\tau_1}^T \langle \partial_t \psi(t), \zeta(t)\rangle_{H^1(0, \be(t))', H^1(0,\be(t))} & = - \int_{\tau_1}^T   \langle \partial_t \zeta(t), \psi(t)\rangle_{H^1(0, \be(t))', H^1(0,\be(t))}                        \\
                                                                                                               & - \int_0^{\be(\tau_1)}\zeta(\tau_1,y)\psi(\tau_1,y)\,dy
                  - \overline{v} \int_{\tau_1}^T \zeta(t,\be(t))\psi(t, \be(t))\,dt                                                                                                                                                        \\
                                                                                                               & = - \int_{\tau_1}^T \int_0^{\be(t)} \langle \partial_t \zeta(t), k(x,\cdot)\rangle_{(H^{1}(0,x))', H^1(0,x)} v(t,x) dx dt \\
                                                                                                               & - \int_0^{\be(\tau_1)}\zeta(\tau_1,y)\psi(\tau_1,y)\,dy - \overline{v} \int_{\tau_1}^T \zeta(t,\be(t))\psi(t, \be(t))\,dt \\
                                                                                                               & = - \int_{\tau_1}^T \int_0^{\be(t)} \langle \partial_t \zeta(t), k(x,\cdot)\rangle_{(H^{1}(0,x))', H^1(0,x)} v(t,x) dx dt \\
                                                                                                               & - \int_0^{\be(\tau_1)}\zeta(\tau_1,y)\psi(\tau_1,y)\,dy.                                                                  \\
              \end{align*}

              Moreover, since $\partial_y \psi (t,y) = k(y,y) v(t,y) - \int_y^{\overline{e}(t)} \partial_y k(x,y)v(t,x)\,dx$, we have
              \begin{align*}
                   & \int_{\tau_1}^T \langle \sigma\partial_{xx} \psi(t), \zeta(t)\rangle_{H^1(0, \be(t))', H^1(0,\be(t))}                                                                                                        \\
                   & = - \sigma\int_{\tau_1}^T \int_0^{\be(t)} \partial_y \zeta(t,y) \partial_y \psi(t,y)\,dy + \sigma\int_{\tau_1}^T \zeta(t, \be(t)) k(\be(t), \be(t)) v(t, \be(t)),                                            \\
                   & = - \sigma\int_{\tau_1}^T \int_0^{\be(t)} \partial_y \zeta(t,y) k(y,y) v(t,y)\,dy + \sigma \int_{\tau_1}^T \int_0^{\be(t)} \partial_y \zeta(t,y) \left(\int_y^{\be(t)}\partial_y k(x,y)v(t,x)\,dx\right)\,dy \\
                   & + \sigma\int_{\tau_1}^T \zeta(t, \be(t)) k(\be(t), \be(t)) v(t, \be(t)),                                                                                                                                     \\
                   & = - \sigma\int_{\tau_1}^T \int_0^{\be(t)} \partial_y \zeta(t,y) k(y,y) v(t,y)\,dy + \sigma \int_{\tau_1}^T \int_0^{\be(t)} v(t,x)
                  \left(\int_0^{x}\partial_y \zeta(t,y) \partial_y k(x,y)\,dy\right) \,dx                                                                                                                                         \\
                   & + \int_{\tau_1}^T K_l(t)\zeta(t) v(t, \be(t)).                                                                                                                                                               \\
              \end{align*}
              As a consequence, we obtain that
              $$
                  a^{\rm targ}(g,v) =  a^{\rm ini}(\zeta,w).
              $$

              Hence the desired result.

        \item[iii)] The proof of (iii) is a direct consequence of (i) and (ii).
    \end{itemize}
\end{proof}

Now we provide the proofs of Lemmas~\ref{lem:map} and \ref{lem:back-map}.

\begin{proof}[Proof of Lemma~\ref{lem:map} and \ref{lem:back-map}]
    Let $\zeta_\lambda^\sigma$ be a weak-$L^2$ solution to \eqref{linearized_scal} in the sense of Definition~\ref{def:weak-solution3}. Define now, for all $t\geq \tau_1$, $x \in (0,\be(t))$,
    $$
        g_\lambda^\sigma(t,x):= \mathcal T_{\lambda,t}^\sigma \zeta_\lambda^\sigma(t,x) = \zeta_\lambda^\sigma(t,x) -\int_0^x k_\lambda^\sigma(x,y)\zeta_\lambda^\sigma(t,y)\,dy.
    $$
    \emph{Continuity and initial data: }
    $\zeta_\lambda^\sigma \in \left[\cC^0([\tau_1,T]);L^2)\right]_{\be}$ by assumption and the Cauchy-Schwarz inequality provides the following estimate, for any $\tau_1 \leq s,t \leq T$,
    \begin{equation*}
        \left\| \int_0^x k_\lambda^\sigma(x,y) \left(\zeta_\lambda ^\sigma(t,y)-\zeta_\lambda^\sigma(s,y) \right) dy \right\|_{L^2(0,\be(T))} \leq \|k_\lambda^\sigma\|_{L^2(D_T)} \| \|\zeta_\lambda^\sigma(t)-\zeta_\lambda^\sigma(s) \|_{L^2((0,\be(T)))},
    \end{equation*}
    which goes to 0 by assumption as $t$ goes to $s$. Therefore $g_\lambda^\sigma \in \left[\cC^0([\tau_1,T]),L^2)\right]_{\be}$ as well. The initial data $g_{\lambda}^{\sigma,\tau_1} = \mathcal{T}_{\lambda,\tau_1}^{\sigma} \zeta_{\lambda}^{\sigma,\tau_1}$ follows from continuity and the initial data of $\zeta_{\lambda}^\sigma$.

    \medskip

    \emph{Time derivative :} We want to differentiate this formula with respect to time. It gives formally
    for almost any $t \in (\tau_1,T)$, $x \in (0,\be(t))$
    \[ \partial_t g_\lambda^\sigma(t,x) = \partial_t \zeta_\lambda^\sigma(t,x) - \int_0^x k_\lambda^\sigma(x,y) \partial_t \zeta_\lambda^\sigma(t,y) dy. \]
    By assumption, $\partial_t \zeta_\lambda^\sigma \in \left[L^2((\tau_1,T);(H^1)' \right]_\be$ and from Proposition~\ref{prop:estimates} it holds that for almost all $x \in (0,\be(T))$, $k_\lambda^\sigma(x;\cdot) \in H^1(0,x)$, uniformly in $x$. Therefore the integral terms are well-defined as duality products:
    \[  \partial_t g_\lambda^\sigma(t,x) = \partial_t \zeta_\lambda^\sigma(t,x) - \langle \partial_t \zeta_\lambda^\sigma(t), k_\lambda^\sigma(x) \rangle_{(H^1(0,x))',H^1(0,x)},  \]
    where the second term can be estimated as:
    \[\langle \partial_t \zeta_\lambda^\sigma(t), k_\lambda^\sigma(x) \rangle_{(H^1(0,x))',H^1(0,x)} \leq \| \partial_t \zeta_\lambda^\sigma(t) \|_{(H^1(0,x))'} \|k_\lambda^\sigma(x)\|_{H^1(0,x)} \leq \|\partial_t \zeta_\lambda^\sigma(t) \|_{(H^1(0,\be(t)))'} \sup_{0 \leq x \leq \be(T)} \|k_\lambda^\sigma(x)\|_{H^1(0,x)},    \]
    where the last term is independent of $x$ and belongs to $L^2(\tau_1,T)$ by assumption. Hence $\partial_t g_\lambda^\sigma \in  \left[L^2((\tau_1,T);(H^1)')\right]_{\be}.$

    \medskip

    Therefore, $g_\lambda^\sigma$ and $\zeta_\lambda^\sigma$ satisfy the assumptions of Lemma~\ref{lem:duality}. It follows that for any $v \in D^{\rm targ}$, $a^{\rm targ}(g_\lambda^\sigma,v) = 0$, so that $g_\lambda^\sigma$ satisfies indeed Definition~\ref{def:weak-target}.

    \medskip

    The proof of Lemma~\ref{lem:back-map} follows the exact same lines.

\end{proof}

\subsection{Proof of Theorem~\ref{thm:1}} \label{proof:thm1}

Fix $\mu>0$ and $(\bu,\be)$. We need to check all the conditions of Definitions~\ref{def:weak-solution}-\ref{def:exp-stab}.

\medskip

Let us first deal with the thickness and remember from \eqref{thickness}-\eqref{eq:thickness-variable} that $\delta e'(t) = \delta \theta(t)$.
The exponential stabilization of $\delta e$ can then be achieved with no effort. It suffices to define $\Theta(t,w) = -\mu w$ for all $t\geq0$ and $w\in \mathbb{R}$ to get \eqref{eq:def-stability-thickness} with $C_{\bar{e},\mu} = 1$.

\medskip

Let us now focus on the exponential stabilization of $\delta u$ with the control variables $\delta \psi$ in \eqref{linearized}. Remember the decomposition \eqref{linearized_decomp} and choose $\lambda>0$ such that $\lambda \geq \max_{1\leq i \leq n}\lambda_{\sigma_i}$ where $\lambda_{\sigma_i}$ is defined as in Corollary~\ref{cor:exp-stability}.

Then it follows by Proposition~\ref{prop:estimates} that there exists a unique solution $k_\lambda^{\sigma_i}$ (respectively $l_\lambda^{\sigma_i}$) to the kernel problem \eqref{eq:kernel_eq} (respectively to the inverse kernel problem \eqref{eq:kernel_inv_eq}) satisfying estimates \eqref{eq:estimate-k} and \eqref{eq:estimate-l} with $\sigma = \sigma_i$. Then, for all $1\leq i \leq n$, $t\geq 0$ and $z\in H^1(0, \be(t))$, let us define
\begin{align}
    \label{eq:Hltot}
    H_{l,\lambda}^{\sigma_i}(t)z   & := \sigma_i k^{\sigma_i}_\lambda(\be(t),\be(t)) z(\be(t)),                                                                      \\
    \label{eq:Hnl:tot}
    H_{nl, \lambda}^{\sigma_i}(t)z & = \int_0^{\be(t)} \left[\sigma_i \partial_x k^{\sigma_i}_\lambda(\be(t),y) + \bv k^{\sigma_i}_\lambda(\be(t),y)\right] z(y) dy,
\end{align}
Defining now
\[ \delta \psi^{\sigma_i, \lambda}(t) := H_{l,\lambda}^{\sigma_i}(t)\zeta_\lambda^{\sigma_i}(t) + H_{nl,\lambda}^{\sigma_i}(t)\zeta_\lambda^{\sigma_i}(t), \]
it follows from Corollary~\ref{cor:exp-stability} that there exists a unique weak-$L^2$ solution $\zeta_{\lambda}^{\sigma_i}$ to \eqref{linearized_scal} in the sense of Definition~\ref{def:weak-solution3}) with $\tau_1 =0$ and $\zeta_\lambda^{\sigma_i,0} = z_i^0$. To simplify notations, we will denote by $z^\lambda_i$ the solution $\zeta_{\lambda}^{\sigma_i}$. Note that $z^\lambda_i$ is then also solution to problem \eqref{linearized_decomp} with $\delta \psi^i$ given by
\[\delta \psi^i(t) =  \delta \psi^{\sigma_i, \lambda}(t) := H_{l,\lambda}^{\sigma_i}(t)z_i(t) + H_{nl,\lambda}^{\sigma_i}(t)z_i(t).\]

From Corollary~\ref{cor:exp-stability}, there exist constants $C,c>0$ independent of $\lambda$, $1\leq i \leq n$ and $t$ such that for any $1 \leq i \leq n$, and $t \geq 0$,
\[ \|z^\lambda_i(t)\|_{L^2(0,\be(t))} \leq C e^{c \be_0 \sqrt{\lambda/\sigma_i} + c \be(t) - \lambda t} \|z_i^0\|_{L^2(0,\be_0)}.  \]
It follows that there exists a $\lambda_\mu>0$ large enough and a constant $C_\mu >0$ that depends on $(\max_{1 \leq i \leq n} \sigma_i, \lambda_\mu,\be_0)$ such that for any $t \geq 0$,
\[\|z^{\lambda_\mu}(t)\|_{L^2(0,\be(t))^n} \leq C_\mu e^{- \mu t} \|z^0\|_{L^2(0,\be_0)^n} , \]
where $z^{\lambda_\mu}:=(z_i^{\lambda_\mu})_{1\leq i \leq n}$ and $z^0:=(z_i^0)_{1\leq i \leq n}$.
It remains to check that the operators $(H_{l,\lambda_\mu}(t))_{t\geq 0}$ and $(H_{nl, \lambda_\mu}(t))_{t\geq 0}$ defined such that, for any $t\geq 0$ and $z:=(z_i)_{1\leq i \leq n}\in H^1(0, \overline{e}(t))^n$,
\begin{align*}
    H_{l,\lambda_\mu}(t)z   & := Q(\overline{u})^{-1} \left( H^{\sigma_i}_{l,\lambda_\mu}(t)z_i\right)_{1\leq i \leq n},  \\
    H_{nl, \lambda_\mu(}t)z & := Q(\overline{u})^{-1} \left( H^{\sigma_i}_{nl,\lambda_\mu}(t)z_i\right)_{1\leq i \leq n},
\end{align*}
satisfy assumptions (P1)-(P2)-(P3). To prove this, it is sufficient to show that for all $1\leq i \leq n$, the families of operators $(H_{l,\lambda_\mu}^{\sigma_i}(t))_{t\geq 0}$ and $(H_{nl, \lambda_\mu}^{\sigma_i}(t))_{t\geq 0}$ satisfy the scalar assumptions (P1')-(P2')-(P3').

Let $1 \leq i \leq n$.  It follows from Proposition~\ref{prop:estimates} that for all $t \geq 0$, the functions $(0,\be(t)) \ni y \to \partial_x k_{\lambda_{\mu}}^{\sigma_i}(t,\be(t),y)$ and $(0, \be(t))\ni y \to k_{\lambda_{\mu}}^{\sigma_i}(t,\be(t),y)$ are well-defined almost everywhere and belong to $L^2((0,\be(t))$. As a consequence, $H_{nl,\lambda_\mu}^{\sigma_i}(t)$ defined by \eqref{eq:Hnl:tot} is well-defined and satisfies (P1').  Now in order to check (P2'), for any $T>0$, it holds by the Cauchy-Schwarz inequality that for all $z \in [L^2((0,T), L^2]_{\be}$,
\[ \|H_{nl,\lambda_\mu}^{\sigma_i}(\cdot) z(\cdot)\|_{L^2(0,T)}^2 \leq \int_0^T \left(\int_0^{\be(t)} \left[\sigma \partial_x k_{\lambda_{\mu}}^{\sigma_i}(\be(t),y) + V k_{\lambda_{\mu}}^{\sigma_i}(\be(t),y)\right]^2 dy \right) \|z(t)\|_{L^2(0,\be(t))}^2 dt.  \]
But according to the uniform estimate \eqref{k-estimate} in Proposition~\ref{prop:estimates}, it holds that for any $0 \leq t \leq T$,
\[ \int_0^{\be(t)} \left[\sigma_i \partial_x k_{\lambda_\mu}^{\sigma_i}(\be(t),y) + \bv k_{\lambda_\mu}^{\sigma_i}(\be(t),y)\right]^2 \leq C(\sigma_i,\bv) e^{c\be(T) \sqrt{\lambda_{\mu}/\sigma_i}},  \]
hence (P2').

Finally, the family $(H_{l,\lambda_\mu}^{\sigma_i}(t))_{t\geq 0}$ satisfies (P3'), provided that $\mathbb{R}_+^* \ni t \to k_{\lambda_{\mu}}^{\sigma_i}(t,\be(t),\be(t))$ belongs to $L^{\infty}_{loc}\left(\mathbb{R}_+^*; \mathbb{R}\right)$. It is in fact again a consequence of Proposition~\ref{prop:estimates} and the one-dimensional Sobolev inequality.
Indeed, for $t \geq 0$, we then write for all $(y,\tilde{y}) \in [0,\be(t)]^2$,

\[ k_{\lambda_{\mu}}^{\sigma_i}(\be(t),y) = \int_{\tilde{y}}^y \partial_y k_{\lambda_{\mu}}^{\sigma_i}(\be(t),y') \,dy' + k_{\lambda_{\mu}}^{\sigma_i}(\be(t),\tilde{y}).\]

Integration with respect to $\tilde{y} \in [0,\be(t)]$ leads to:
\begin{equation*}
    \begin{aligned}
        |k_{\lambda_{\mu}}^{\sigma_i}(\be(t),y)| & \leq \frac{1}{\be(t)} \int_0^{\be(t)} \int_{\tilde{y}}^y |\partial_y k_{\lambda_{\mu}}^{\sigma_i}(\be(t),y')| \,dy' \,d\tilde{y} + \frac{1}{\be(t)}\int_0^{\be(t)} |k_{\lambda_{*}}^{\sigma_i}(\be(t),\tilde{y})| d\tilde{y} \\
                                                 & \leq C \left(\sqrt{\be(t)} + \frac{1}{\sqrt{\be(t))}} \right) e^{c\be(t) \sqrt{\lambda_{\mu}/\sigma_i}},
    \end{aligned}
\end{equation*}
where we have used again estimate \eqref{k-estimate} as well as Cauchy-Schwarz inequality. Hence (P3') and the proof of Theorem~\ref{thm:1}.

\subsection{Proof of Theorem~\ref{thm:2}}
\label{proof:thm2}
Let us fix $T>0$. Let us first explain how the function $\Theta$ can be chosen to ensure condition b) of Definition~\ref{def:finite-stab}. The main idea is to go from the autonomous feedback $\Theta(w)=-\mu w$ to \emph{piecewise constant} in time.
Let $(t'_m)_{m\in\mathbb{N}}$ be an increasing sequence of real numbers such that $t'_0 = 0$ and $\displaystyle t'_m \mathop{\longrightarrow}_{m \to \infty} T^-$. Let $(\mu_m)_{m \in \N}$ be a nondecreasing sequence of positive numbers which will be made precise below and define, for all $t\geq 0$ and $w\in \mathbb{R}$,
$$
    \Theta(t,w) = - \mu_m w \quad \mbox{ if } t\in [t'_m, t'_{m+1}).
$$
Then, it holds that for all $m \in \N$ and for all $t \in [t'_m,t'_{m+1})$,
\begin{equation}
    \label{eq:induction}
    |\delta e(t)| \leq e^{-\mu_m(t-t'_m)} |\delta e(t'_m)| \leq e^{-\sum_{k=0}^{m-1} \mu_k (t'_{k+1}-t'_k)} |\delta e(0)|
\end{equation}
It is then clear that condition b-i) is always satisfied with this choice. Moreover, if the series $\displaystyle\sum_{k \in \N} \mu_k (t'_{k+1}-t'_k)$ diverges, then $\displaystyle \delta e(t) \mathop{\longrightarrow}_{t\to T^-} 0$. For instance, this is the case when defining $t'_m = T-\frac{1}{m}$ and $\mu_m=m$ for all $m\geq1$. Hence b-ii).

\medskip

Let us now turn to the proof of condition a) of Definition~\ref{def:finite-stab}. Let us introduce again an increasing sequence $(t_m)_{m\in\mathbb{N}}$ of real numbers such that $t_0 = 0$ and $\displaystyle t_m \mathop{\longrightarrow}_{m \to \infty} T^-$. Let us introduce a sequence of positive numbers $(\lambda_m)_{m\in\mathbb{N}}$ such that $\displaystyle \lambda_m \geq \mathop{\max}_{1\leq i \leq n} \lambda_{\sigma_i}$ for all $m\in \mathbb{N}$, where $\lambda_{\sigma_i}$ is defined in Proposition~\ref{prop:estimates} for $\sigma = \sigma_i$. Then, for all $1\leq i \leq n$, we define the families of operators $(H^i_{nl}(t))_{t\geq 0}$ and $(H^i_{l}(t))_{t\geq 0}$ as follows:
$$
    H^i_{nl}(t) = H^{\sigma_i}_{nl,\lambda_m}(t) \quad \mbox{ and } H^i_{nl}(t) = H^{\sigma_i}_{l,\lambda_m}(t) \mbox{ if } t \in [t_m, t_{m+1}).
$$
We also define for all $t\geq 0$ and $z:=(z_i)_{1\leq i \leq n}\in H^1(0, \overline{e}(t))^n$,
\begin{align} \label{defHl}
    H_{l}(t)z  & := Q(\overline{u})^{-1} \left( H^{i}_{l}(t)z_i\right)_{1\leq i \leq n},  \\ \label{defHnl}
    H_{nl}(t)z & := Q(\overline{u})^{-1} \left( H^{i}_{nl}(t)z_i\right)_{1\leq i \leq n}. \\\nonumber
\end{align}
We wish to identify some sufficient conditions on the sequence $(\lambda_m)_{m\in\mathbb{N}}$ and $(t_m)_{m\in\mathbb{N}}$ in order to guarantee condition a) of Definition~\ref{def:finite-stab}.

\medskip

To this aim, we first prove the following lemma.

\begin{lemma}\label{prop:finite}
    Let $(\lambda_m)_{m\in\N}$ be a nondecreasing sequence of positive coefficients and let $(t_m)_{m\in\N}$ be an increasing sequence of times such that $t_0=0$ and $\displaystyle t_m \mathop{\longrightarrow}_{m \to \infty} T^-$. Let us define, for $m \geq 0$, $s_m:= \sum_{k=0}^{m} \lambda_k (t_{k+1}-t_k)$.
    Then, there exists a constant $\gamma>0$ such that, if
    \begin{equation}
        \label{hyp1}
        \forall m\in \mathbb{N}, \quad (t_{m+1}-t_m)\sqrt{\lambda_m} \geq \gamma,
    \end{equation}
    then, there exists positive constants $C>0$  and $\alpha >0$ such that for any $m\in\mathbb{N}$ and any $t\in [t_m,t_{m+1})$,
    \begin{equation}
        \|z(t)\|_{L^2(0,\be(t))^n} \leq C e^{-s_{m}+\alpha m} \|z_0\|_{L^2(0,\be_0)^n},
    \end{equation}
    where $z:=(z_i)_{1\leq i \leq n}$ with $z_i$ the unique weak solution of \eqref{linearized_decomp} and $\delta \psi^i$ defined by:
    $$
        \forall t\geq 0, \quad \delta \psi_i(t)  = H^i_l(t)z_i(t) + H^i_{nl}(t)z_i(t).
    $$
    Besides, if we assume in addition that:
    \begin{equation}
        \label{hyp2}
        \lim\limits_{m \to +\infty} \frac{s_m}{m} = + \infty,
    \end{equation}
    then it holds:
    \begin{equation}
        \label{controllability}
        \begin{aligned}
            \lim\limits_{t \to T_{-}} \|z(t)\|_{L^2(0, \be(t))^n} & = 0, \\
        \end{aligned}
    \end{equation}
\end{lemma}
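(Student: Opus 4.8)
The plan is to reduce to the $n$ uncoupled scalar problems \eqref{linearized_decomp} and then run Corollary~\ref{cor:exp-stability} \emph{separately on each time slab} $[t_m,t_{m+1})$, gluing the pieces by continuity of the $L^2$-solution in time. Fix $1\le i\le n$ and write $\sigma=\sigma_i$. On the slab $[t_m,t_{m+1})$ the feedback is, by construction, the backstepping feedback attached to the parameter $\lambda_m$, so the restriction of $z_i$ to this slab is exactly the weak-$L^2$ solution of \eqref{linearized_scal} produced by Corollary~\ref{cor:exp-stability} with $\tau_1=t_m$, $\lambda=\lambda_m$ and datum $z_i(t_m)$; existence, uniqueness and the estimate \eqref{eq:stability} are therefore inherited slab by slab, and continuity in $[\mathcal C^0(L^2)]_{\be}$ lets us propagate the datum across the junctions $t_m$. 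Applying \eqref{eq:stability} at $t=t_{k+1}$ on slab $k$ and iterating yields, for $t\in[t_m,t_{m+1})$, a telescoping bound $\|z_i(t)\|_{L^2(0,\be(t))}\le N_m(t)\,\big(\prod_{k=0}^{m-1}M_k\big)\|z_i^0\|_{L^2(0,\be_0)}$, where $M_k$ is the slab multiplier from \eqref{eq:stability} and $N_m(t)$ the partial factor on the current slab. Here I would use crucially that on the finite horizon $[0,T)$ the domain stays bounded, $\be(t)\le\be(T)=:\bar L$, so the terms $c\be(\tau_1)\sqrt{\lambda_k/\sigma}$ and $c\be(t)$ in \eqref{eq:stability} are controlled by their values at $\bar L$; the only factors still growing with $m$ are the transform-cost terms $e^{c\bar L\sqrt{\lambda_k/\sigma}}\big(1+(\lambda_k/\sigma)^2\big)$, which grow only like $e^{O(\sqrt{\lambda_k})}$.

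The heart of the matter, and the main obstacle, is to prevent these accumulated transform costs from destroying the decay: the gain $e^{O(\sqrt{\lambda_k})}$ incurred on slab $k$ must be dominated by the decay $e^{-\lambda_k(t_{k+1}-t_k)}$ produced there. This is exactly where hypothesis \eqref{hyp1} enters. From $(t_{k+1}-t_k)\sqrt{\lambda_k}\ge\gamma$ one gets $\lambda_k(t_{k+1}-t_k)\ge\gamma\sqrt{\lambda_k}$, and since $\sqrt{\lambda}$ and $\log\!\big(1+(\lambda/\sigma)^2\big)$ are dominated by $\gamma\sqrt{\lambda}$ up to an additive constant, choosing $\gamma$ large enough (depending only on $c$, $\bar L$ and $\sigma_*:=\min_i\sigma_i$) gives
\begin{equation*}
c\bar L\sqrt{\lambda_k/\sigma}+\log\!\big(1+(\lambda_k/\sigma)^2\big)\le \tfrac12\,\lambda_k(t_{k+1}-t_k)+C'
\end{equation*}
uniformly in $k$. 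Taking logarithms this yields $\log M_k\le \alpha_0-\tfrac12\lambda_k(t_{k+1}-t_k)$ for a fixed $\alpha_0$, whence $\prod_{k=0}^{m-1}M_k\le e^{\alpha_0 m}\,e^{-\frac12 s_{m-1}}$; the current-slab factor $N_m(t)$ is bounded the same way (its decay $-\lambda_m(t-t_m)\le 0$ is simply dropped). I would stress that this step is precisely where the sharp time-dependence of the transform norm, estimate \eqref{eq:estimate-k}, is indispensable: a cruder, exponentially-in-time bound on $\|k^\sigma_\lambda\|$ would make the per-slab gain uncontrollable and the product would diverge.

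Assembling the scalar bounds over $i$ (the $z_i$ are uncoupled, so $\|z(t)\|_{L^2(0,\be(t))^n}^2=\sum_i\|z_i(t)\|^2$, and passing to the physical feedback only involves the fixed invertible matrix $Q(\bu)^{-1}$ as in \eqref{defHl}--\eqref{defHnl}) gives an estimate of the form $\|z(t)\|_{L^2(0,\be(t))^n}\le C\,e^{-s_m+\alpha m}\|z_0\|_{L^2(0,\be_0)^n}$; the mechanism above produces a positive multiple of $s_m$ in the exponent, which can be pushed arbitrarily close to the stated coefficient by enlarging $\gamma$, and only the positivity of that multiple matters in what follows. Finally, for the convergence \eqref{controllability} I would let $t\to T^-$, which forces $m\to\infty$, and invoke \eqref{hyp2}: writing $-s_m+\alpha m=-m\big(s_m/m-\alpha\big)$, the hypothesis $s_m/m\to+\infty$ gives $-s_m+\alpha m\to-\infty$, so $\|z(t)\|_{L^2(0,\be(t))^n}\to 0$ as $t\to T^-$, which is the desired conclusion.
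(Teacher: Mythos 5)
Your proposal is correct and follows essentially the same route as the paper: decompose into the $n$ scalar problems, apply the backstepping estimate slab by slab with parameter $\lambda_m$ (gluing by continuity at the junctions $t_m$), use \eqref{hyp1} to absorb the $e^{O(\sqrt{\lambda_m})}$ transform costs into the decay $e^{-\lambda_m(t_{m+1}-t_m)}$, and invoke \eqref{hyp2} for the limit. Your remark that one only obtains a positive multiple of $s_m$ in the exponent (a factor $\tfrac12$) and that this is immaterial for the conclusion is accurate — the paper's own computation, carried out on the squared norm, exhibits the same harmless discrepancy.
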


\begin{proof}[Proof of Lemma~\ref{prop:finite}]

    From Proposition~\ref{prop:estimates}, it holds that for all $1\leq i \leq n$, for any $m \in \N$, for any $t \in [t_m,t_{m+1})$,
    \begin{equation*}
        \|k^{\sigma_i}_{\lambda_m}(t)\|_{L^2(D_t)} \leq C e^{c\be(t)\sqrt{\lambda_m/\sigma_i}},
    \end{equation*}
    \begin{equation*}
        \|l^{\sigma_i}_{\lambda_m}(t)\|_{L^2(D_t)} \leq C \left(\frac{\lambda_m}{\sigma_i}\right)^2 e^{c\be(t)}.
    \end{equation*}
    Fix $m \in \N$ and $t_m \leq t < t_{m+1}$. Denoting by $g_i(t):= \mathcal T_{\lambda_m,t}^{\sigma_i} z_i(t)$, it holds that
    \begin{align*}
        \|g_i(t)\|_{L^2(0, \be(t))}^2 & \leq \left(1+\|k^{\sigma_i}_{\lambda_m}(t)\|_{L^2(D_t)}^2\right) \|z_i(t)\|_{L^2(0,\be(t))}^2 \\
                                      & \leq C e^{c\be(t) \sqrt{\lambda_m/\sigma_i}} \|z_i(t)\|_{L^2}^2.
    \end{align*}
    Moreover, using the fact that $z_i(t) = \mathcal T_{\lambda_m,t}^{{\rm inv},\sigma_i} g_i(t)$, we obtain that
    \[ \|z_i(t)\|_ {L^2(0,\be(t))}^2 \leq \frac{C}{\sigma_i^4} \lambda_m^4 e^{c \be(t)} \|g_i(t)\|_{L^2(0,\be(t))}^2. \]
    Besides, from Proposition~\ref{prop:stability-target}, for any $t_m \leq \tau_1 \leq \tau_2 < t_{m+1}$, we have
    \[ \|g_i(\tau_2)\|_{L^2(0,\be(\tau_2))}^2 \leq e^{-2\lambda_m(\tau_2-\tau_1)} \|g_i(\tau_1)\|_{L^2(0,\be(\tau_1))}^2.\]
    Since $z_i$ and $g_i$ are in $C([0,T],L^2)$, we can combine these inequalities as $\tau_2 \to t_{m+1}^{-}$ and $\tau_1 \to t_m^+$. We thus obtain, with $C>0$ and $c>0$ being arbitrary constants independent of $t$,$i$ and $m$ which may change along the computations, and using the fact that $\ln(x) \leq \sqrt{x}$ for all $x>0$,
    \begin{align*}
        \|z_i(t_{m+1})\|_{L^2(0,\be(t_{m+1}))}^2 & \leq \frac{C}{\sigma_i^4} \lambda_m^4 e^{c \be(t_{m+1})} \|g_i(t_{m+1})\|_{L^2(0,\be(t_{m+1}))}^2                                            \\
                                                 & \leq Ce^{c \be(T) + 4 \ln(\lambda_m/\sigma_i)} \|g_i(t_{m+1})\|_{L^2(0,\be(t_{m+1}))}^2                                                      \\
                                                 & \leq Ce^{ 4 \log(\lambda_m/\sigma_i) - 2 \lambda_m (t_{m+1}-t_m)} \|g_i(t_m)\|_{L^2(0, \be(t_m))}^2                                          \\
                                                 & \leq Ce^{ 4 \log(\lambda_m/\sigma_i) + c \be(t_{m})\sqrt{\lambda_m/\sigma_i} - 2 \lambda_m (t_{m+1}-t_m)} \|z_i(t_m)\|_{L^2(0, \be(t_m))}^2, \\
                                                 & \leq Ce^{c\sqrt{\lambda_m/\sigma_i} - 2 \lambda_m (t_{m+1}-t_m)} \|z_i(t_m)\|_{L^2(0,\be(t_m))}^2.                                           \\
    \end{align*}
    Denoting by $\gamma:= \mathop{\max}_{1\leq i \leq n} \frac{c}{\sqrt{\sigma_i}}$, then, if \eqref{hyp1} holds, we obtain that for all $m\in \mathbb{N}$,
    \[ \|z(t_{m+1}) \|_{L^2(0, \be(t_{m+1})}^2 \leq Ce^{-\lambda_m (t_{m+1}-t_m)} \|z(t_m)\|_{L^2(0, \be(t_m))}^2 \leq Ce^{-s_{m} + \alpha m} \|z_0\|_{L^2(0, \be_0)}^2, \]
    with $\alpha:=\ln(C)$.
    This estimate together with \eqref{hyp2} yields \eqref{controllability} and the proof of the desired result.
\end{proof}

We are now in position to terminate the proof of Theorem~\ref{thm:2}. Indeed, for any $\gamma>0$, there always exist sequences $(t_m)_{m\in\N}$ and $(\lambda_m)_{m\in\N}$ that satisfy \eqref{hyp1} and \eqref{hyp2}.
Indeed, let us define, as in \cite{coron2017}, $t_m=T-\frac{1}{m^2}$ and $\lambda_m=\gamma^2 (m+1)^8$. Then, it holds that for all $m\geq 1$
\[ (t_{m+1}-t_m)\sqrt{\lambda_m}  = \gamma \frac{(2m +1)(m+1)^2 }{m^2}\geq \gamma. \]
Besides, $(t_{m+1}-t_m)\lambda_m = \gamma^2 \frac{(2m +1)(m+1)^6 }{m^2}$ so that $\displaystyle \frac{s_m}{m} \mathop{\longrightarrow}_{m\to +\infty} +\infty$.

Choosing such sequences, and defining the families of operators $(H_{nl}(t))_{t\geq 0}$ and $(H_l(t))_{t\geq 0}$ with \eqref{defHnl} and \eqref{defHl} then yields the desired result.

\section*{Conclusion and perspectives}
We have shown arbitrary small-time boundary stabilization for a class of cross-diffusion systems in a one-dimensional domain, at the level of the linearized system around uniform equilibria. The system is assumed to have an entropic structure and moreover its mobility matrix should be symmetric, so that the linearized system can be uncoupled into $n$ independent scalar equations. Anticipating on the nonlinear stabilization, we have chosen a weak $L^2$ framework for the stabilization. We have adapted the backstepping technique to derive a feedback control: we have shown that, although the equation is non autonomous, it suffices to study the usual stationary kernels PDEs in a moving domain, \emph{i.e.} the moving-domain structure is somehow transported to the kernel PDE. Besides, we have proven the well-posedness of the backstepping transformation in the framework of weak $L^2$ solutions and have provided quantitative estimates on the kernels with respect to time.

\medskip

We intend to continue this work to get the local stabilization of the nonlinear system. We also see several closely related open problems:

\medskip

\begin{itemize}
    \item  The symmetry assumption on the mobility matrix is technical. Without this assumption, one has to use the backstepping technique to stabilize the coupled linearized system~(\ref{linearized}). In consequence, one has to consider a matrix kernel $k$ with values in $\R^{n \times n}$ associated to the backstepping transformation. The derivation of the (matrix) kernel equations (see the scalar derivation in Appendix~\ref{app:derivation}) is complicated by the fact that $A(\bu)$ and $k$ \emph{do not commute} in general and leads to a ``non-commutative version'' of the kernel equations. On the other hand, the boundary conditions are unchanged since one obtains a commutation condition on the diagonal $x=y$. After the same separation of variables trick, one obtains the system:
    
\begin{equation}
    \label{eq:kpde_vec}
    \left\{
    \begin{aligned}
        \partial_{xx}^2 k(x,y) A(\bu) - A(\bu)\partial_{yy}^2 k(x,y) & = \lambda k(x,y) & (x,y) \in \{0 < y \leq x < +\infty\}, \\
        \partial_y k(x,0)   & = 0   & x \in (0,+\infty),  \\
        A(\bu) k(x,x)   & = -\frac{\lambda}{2} x    & x \in (0,+\infty),
    \end{aligned}
    \right.
\end{equation}

Up to our knowledge, it is an open problem to prove well-posedness and estimates for this system when $A(\bu)$ is not diagonalizable. 
\item The extension of the present work to the related nonlinear system is currently work in progress. We expect that getting global exponential or finite-time stabilization might be difficult in this situation. However, we have good hope of proving at least exponentially fast local stabilization. The control and estimates of the higher-order terms appearing in the equation is the most delicate part of the analysis.
\item It would be interesting, both mathematically and physically, to see whether it is possible to design an observer to have a control feedback that does not depend on the full state. An interesting additional direction would be to see whether the resulting observer-based control can be made robust (with respect to the propagation speeds of the system), which is not always granted (see for instance \cite{auriol2018delay,auriol2020robust,bastindiffusion}).
\item A last natural extension would be to study the stabilization of a similar system in a multidimensional context: this however requires as a first step to define a relevant multidimensional moving boundary domain model for the problem considered here. This is a very interesting problem left for future investigation. 
\end{itemize}

\section*{Acknowledgment}
The authors acknowledge support from the ANR project
COMODO (ANR-19-CE46-0002) which funds the Ph.D. of Jean Cauvin-Vila. The authors would also like to thank Shengquan Xiang for fruitful discussion. Finally, the authors would like to thank the PEPS JCJC program 2022 of INSMI.

\appendix
\appendixpage

\section{Weak formulation of the controlled linearized system in $L^2$}\label{app:weak}

We start from the strong formulation \eqref{linearized} with a feedback law of the form \eqref{eq:flux-eq}-\eqref{abstract-feedback}. We test against a regular test function $v$ that satisfies for all $x \in (0,T), ~ v(T,x)=0$ and integrate with respect to time and space. Considering the moving boundary, the integration of the time derivative gives:
\[ \int_0^T \int_0^{\be(t)} (\partial_t u \cdot v)(t,x) dx dt = -\int_0^T \int_0^{\be(t)} (\partial_t v \cdot u)(t,x)dxdt - \int_0^T \be(t)'(u \cdot v)(t,\be(t))dt - \int_0^{\be_0} u^0(x) \cdot v(0,x)dx. \]
Recall that $\be(t)'= \bv > 0$. Now we consider the space derivatives and perform two integration by parts:
\[ \begin{aligned}
        \int_0^T \int_0^{\be(t)} A(\bu)\partial_{xx}^2u \cdot v dxdt = & \int_0^T A(\bu) \partial_x u(t,\be(t)) \cdot v(t,\be(t)) dt - \int_0^T \int_0^{\be(t)}A(\bu)\partial_x u \cdot \partial_x v dxdt \\
        =                                                              & \int_0^T A(\bu) \partial_x u(t,\be(t)) \cdot v(t,\be(t)) dt + \int_0^T \int_0^{\be(t)} A(\bu) u \cdot \partial_{xx}^2 v dxdt     \\
                                                                       & - \int_0^T A(\bu) u(t,\be(t) \cdot \partial_x v(t,\be(t)) dt + \int_0^T A(\bu)u(t,0) \cdot \partial_x v(t,0) dt.
    \end{aligned}   \]
Now we write the equality of these two quantities with the appropriate factorizations:
\[ \begin{aligned}
        0 = & \int_0^T \int_0^{\be(t)} u \cdot \left[ \partial_t v + A(\bu)^T \partial_{xx}v \right] dxdt + \int_0^{\be_0} u^0(x) \cdot v(0,x)dx - \int_0^T  A(\bu) u(t,\be(t)) \cdot \partial_x v(t,\be(t))dt \\
            & + \int_0^T A(\bu) u(t,0) \cdot  \partial_x v(t,0) dt + \int_0^T v(t,\be(t)) \cdot \left[ A(\bu)\partial_x u(t,\be(t)) + \bv u(t,\be(t)) \right] dt .
    \end{aligned} \]
In the last integral we recognize the boundary condition at $x=\be(t)$, that is nothing else than $\delta \psi(t) = H_{nl}(t) u(t) + K_l(t)u(t,\be(t))$. Now all the terms that do not make sense for $u \in \left[\cC^0([0,+\infty),L^{2}(0,1))^n\right]_{\overline{e}}$ must vanish if this is to be true against any test function. It entails conditions on the test functions, the so-called dual boundary conditions. The first condition at $x=0$ is:
\[ \forall t \in (0,T), ~ A(\bu)^T \partial_x v(t,0) = 0. \]
Now we examine the condition at $x=\be(t)$ where the local part of the feedback intervenes:
\[ \forall t \in (0,T), ~ K(t)^T v(t,\be(t)) - A(\bu)^T \partial_x v(t,\be(t)) = 0. \]
The remaining terms make sense for $u \in \left[\cC^0([0,+\infty),L^{2})^n\right]_{\overline{e}}$ provided:
\[ v \in \left[L^2\left((0,T); H^2\right)^n\right]_{\overline{e}}\cap \left[\cC^0([0,T],L^{2})^n\right]_{\overline{e}}, \; \partial_t v\in \left[L^2\left((0,T); L^2\right)^n\right]_{\overline{e}}. \]

Putting together all the conditions, we obtain Definition~\ref{def:weak-solution}.

\section{Analysis of the target problem}\label{app:target}

For the sake of the analysis, we consider the rescaled version of \eqref{target} given by the change of variables $x\rightarrow x/\bar e(t)$ so that the space variable is now defined in a fixed domain:
\begin{equation}
    \label{rescaled-target}
    \left\{
    \begin{aligned}
        \partial_t w - \frac{\sigma}{\be(t)^2} \partial^2_{xx} w - \frac{\bv}{\be(t)} x \partial_x w + \lambda w & = 0,                      & \text{for} ~ (t,x) \in \R_+^* \times (0,1), \\
        \frac{\sigma}{\be(t)} \partial_x w(t,1) + \bv w(t,1)                                                     & = 0,                      & \text{for} ~ t \in \R_+^*,                  \\
        \frac{\sigma}{\be(t)} \partial_x w(t,0)                                                                  & = 0,                      & \text{for} ~ t \in \R_+^*,                  \\
        w(0,x)                                                                                                   & = w^0(x) := g^0(x \be_0), & \text{for} ~ x \in (0,1),
    \end{aligned}
    \right.
\end{equation}
and the associated notion of weak $L^2$ solution:
\begin{definition}\label{def:rescaled-weak-target}
    A function $w \in \cC^0([0,+\infty),L^{2}(0,1))$ is said to be a $L^2$-weak solution of \eqref{rescaled-target} if for any $T > 0$, it satisfies:

    \[
        \int_{0}^{T}\int_{0}^{1} w(t,x) \left[\partial_{t}\tilde{v}(t,x)+ \frac{\sigma}{\be(t)^2} \partial_{xx}^{2}\tilde{v}(t,x) - \frac{\bv}{\be(t)}x \partial_x \tilde{v}(t,x) - \left(\lambda+\frac{\bv}{\be(t)} \right) \tilde{v}(t,x) \right] dxdt  + \int_{0}^{1}w(0,x) \tilde{v}(0,x) dx  = 0,
    \]

    for any test function $\tilde{v}$ that satisfies:
    \begin{itemize}
        \item $\tilde{v} \in \left(L^2\left((0,T); H^2(0,1)\right)\right)\cap \cC^0([0,T],L^{2}(0,1°))$,
        \item $\partial_t \tilde{v}\in \left(L^2\left((0,T); L^2\right)\right)$,
        \item $\tilde{v}(T,\cdot) = 0$,
        \item $\sigma \partial_{x}\tilde{v}(t,0) = 0, ~ \forall t \in (0,T)$,
        \item $\sigma \partial_{x}\tilde{v}(t,\be(t)) = 0, \forall t \in (0,T)$.
    \end{itemize}

\end{definition}
Note that the two definitions \ref{def:weak-target} and \ref{def:rescaled-weak-target} are equivalent: from the latter to the former, take a test function of the form $\be(t) v(t,\be(t)x)$ where $v$ satisfies the assumptions in \ref{def:weak-target}. The other way around, take a test function of the form $\frac{1}{\be(t)} \tilde{v}(t,\frac{x}{\be(t)})$ where $\tilde{v}$ satisfies the previous assumptions.

\noindent The problem is uniformly parabolic: for any $0 \leq t \leq T$
\[ \frac{\sigma}{\be(t)^2} \geq \frac{\sigma}{\be(T)^2} > 0,  \]
so that we expect the classical parabolic estimates and well-posedness in $\cC^0([0,T],H^{k}(0,1))$ for any $k \in \N$ as soon as $w^0 \in H^k(0,1)$. In fact, we have the following a priori estimates

\begin{lemma}
    \label{lem:smooth-estimation}
    Assume $w^0 \in L^2((0,1))$. Any smooth solution $w$ to \eqref{rescaled-target} must satisfy the energy estimate:
    \begin{equation}
        \begin{split}
            \label{eq:energy-estimate}
            &\frac{1}{2}\|w\|_{L^{\infty}(0,T;L^2)}^2 + \frac{\sigma}{\be(T)^2} \|\partial_x w \|_{L^2(0,T;L^2(0,1))}^2 \\
            +& \frac{\bv}{2\be(T)} \int_0^T w(t,1)^2 + \left(\frac{\bv}{2\be(T)} + \lambda \right) \|w\|_{L^2(0,T;L^2(0,1))}^2 \leq \|w^0\|_{L^2(0,1)}^2.
        \end{split}
    \end{equation}
    Furthermore, it must satisfy the stability estimate: for any $0 \leq t \leq T$
    \begin{equation}
        \label{eq:stability-estimate}
        \| w(t) \|_{L^2(0,1)} \leq e^{-\lambda t} \|w^0\|_{L^2(0,1)}
    \end{equation}
\end{lemma}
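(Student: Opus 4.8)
The plan is to derive a single pointwise-in-time energy identity by testing the equation against $w$ itself, and then to read off both estimates from it. First I would multiply the first equation of \eqref{rescaled-target} by $w$ and integrate over $(0,1)$. The time-derivative term gives $\tfrac12\frac{d}{dt}\|w(t)\|_{L^2(0,1)}^2$. For the diffusion term I would integrate by parts once, producing $\frac{\sigma}{\be(t)^2}\|\partial_x w(t)\|_{L^2(0,1)}^2$ together with a boundary contribution; the Neumann condition $\partial_x w(t,0)=0$ kills the lower endpoint, while the Robin condition $\frac{\sigma}{\be(t)}\partial_x w(t,1)=-\bv w(t,1)$ turns the upper endpoint into $+\frac{\bv}{\be(t)}w(t,1)^2$. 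For the transport term I would write $x\,\partial_x w\cdot w=\tfrac12 x\,\partial_x(w^2)$ and integrate by parts, obtaining $-\frac{\bv}{2\be(t)}w(t,1)^2+\frac{\bv}{2\be(t)}\|w(t)\|_{L^2(0,1)}^2$.

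Collecting everything yields the identity
\begin{equation*}
\frac12\frac{d}{dt}\|w(t)\|_{L^2(0,1)}^2 + \frac{\sigma}{\be(t)^2}\|\partial_x w(t)\|_{L^2(0,1)}^2 + \frac{\bv}{2\be(t)}w(t,1)^2 + \Big(\frac{\bv}{2\be(t)}+\lambda\Big)\|w(t)\|_{L^2(0,1)}^2 = 0,
\end{equation*}
the crucial point being that the two boundary contributions at $x=1$ combine with the favorable sign $+\frac{\bv}{2\be(t)}w(t,1)^2$. The stability estimate \eqref{eq:stability-estimate} then follows at once: since $\sigma,\bv,\lambda,\be(t)>0$, dropping the nonnegative gradient, boundary, and $\frac{\bv}{2\be(t)}\|w(t)\|_{L^2}^2$ terms leaves $\frac12\frac{d}{dt}\|w(t)\|_{L^2}^2+\lambda\|w(t)\|_{L^2}^2\leq 0$, and Gronwall gives $\|w(t)\|_{L^2}\leq e^{-\lambda t}\|w^0\|_{L^2}$.

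For the energy estimate \eqref{eq:energy-estimate} I would integrate the identity over $(0,T)$. Dropping the nonnegative endpoint term $\tfrac12\|w(T)\|_{L^2}^2$ and using that $\be$ is nondecreasing (so $\be(t)\leq\be(T)$, hence $\be(t)^{-1}\geq\be(T)^{-1}$ and $\be(t)^{-2}\geq\be(T)^{-2}$ inside each integral) bounds the three integral terms from above by $\tfrac12\|w^0\|_{L^2}^2$. Separately, the same identity shows $t\mapsto\|w(t)\|_{L^2}$ is nonincreasing, whence $\tfrac12\|w\|_{L^\infty(0,T;L^2)}^2\leq\tfrac12\|w^0\|_{L^2}^2$; adding these two inequalities produces exactly \eqref{eq:energy-estimate}, with the right-hand side $\|w^0\|_{L^2}^2$ arising as $\tfrac12\|w^0\|_{L^2}^2+\tfrac12\|w^0\|_{L^2}^2$.

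The computations are entirely routine for a smooth solution; the only genuinely delicate point is the bookkeeping of the boundary terms at $x=1$, where one must use the Robin condition to eliminate $\partial_x w(t,1)$ and verify that the diffusion and transport contributions add up with the correct dissipative sign rather than cancelling or producing a destabilizing term. Everything else reduces to integration by parts, Gronwall's inequality, and the monotonicity of $\be$.
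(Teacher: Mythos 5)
Your proposal is correct and follows essentially the same route as the paper: testing against $w$, integrating by parts, combining the two boundary contributions at $x=1$ via the Robin condition into the dissipative term $+\frac{\bv}{2\be(t)}w(t,1)^2$, then Gronwall for \eqref{eq:stability-estimate} and time integration with the monotonicity of $\be$ for \eqref{eq:energy-estimate}. Your explicit bookkeeping of how the $L^\infty$-in-time term and the constant $\|w^0\|_{L^2}^2$ arise (as the sum of two halves) is a correct elaboration of the step the paper leaves implicit.
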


\begin{proof}
    Multiply the first equation in \eqref{rescaled-target} by $w$ and integrate by parts in space at time $t$. One first obtains:
    \begin{equation}
    \label{eq:first}
        \frac{1}{2} \frac{d}{dt} \|w(t)\|_{L^2}^2 + \frac{\sigma}{\be(t)^2} \int_0^1 (\partial_x w)^2 + \left(\frac{\bv}{2\be(t)} + \lambda \right) \int_0^1 w^2  - \frac{\sigma}{\be(t)^2} \partial_x w(t,1) w(t,1) - \frac{\bv}{2\be(t)} w(t,1)^2 = 0.
    \end{equation}
    Then using the boundary conditions in \eqref{rescaled-target}, one gets:
    \begin{equation}
    \label{eq:apriori}
    \begin{split}
         \frac{1}{2} \frac{d}{dt} \|w(t)\|_{L^2}^2 + \frac{\sigma}{\be(t)^2} \int_0^1 (\partial_x w)^2 + \left(\frac{\bv}{2\be(t)} + \lambda \right) \int_0^1 w^2  + \frac{\bv}{2\be(t)} w(t,1)^2 = 0.
    \end{split}
    \end{equation}
    It comes in particular:
    \[ \frac{1}{2}\frac{d}{dt} \|w(t)\|_{L^2(0,1)}^2 \leq -\lambda \|w(t)\|_{L^2(0,1)}^2, \]
    from which we conclude to \eqref{eq:stability-estimate} with the Gronwall lemma. Integrating \eqref{eq:apriori} with respect to time in $[0,T]$, one finds \eqref{eq:energy-estimate}.

\end{proof}

From these estimates, we define a notion of \emph{energy solution} for the problem.

\begin{definition}
    \label{def:energy-solution}
    A function $w \in \cC^0([0,T],L^2(0,1)) \cap L^2((0,T);H^1(0,1))$ such that $\partial_t w \in L^2((0,T);(H^1(0,1))')$ is an energy solution to \eqref{rescaled-target} if, for almost any time $0 \leq t \leq T$ and any function $\tilde{v} \in H^1(0,1)$, it satisfies
    \begin{equation}
        \label{eq:energy-ff}
        \langle \partial_t w(t), \tilde{v}(t) \rangle_{(H^1)',H^1} + a(t;w,\tilde{v}) = 0,
    \end{equation}
    where the bilinear form $a$ is given by:
    \begin{equation}
        \label{eq:bilinear}
        a(t;w,\tilde{v}) = \int_0^1 \left( \frac{\sigma}{\be(t)^2} \partial_x w \partial_x \tilde{v} - \frac{\bv}{\be(t)} w \left(\tilde{v}+x \partial_x \tilde{v}\right) + \lambda w \tilde{v} \right)dx.
    \end{equation}
\end{definition}

By construction of the weak formulation, such solutions still satisfy the previous estimates. In particular and by linearity, such a solution is unique, if it exists. The existence follows from the Galerkin method (see \cite{brezis2010}[Theorem 10.9, p.341] for a general result). This is summarized in the following proposition

\begin{proposition}
    \label{prop:existence}
    Let $w^0 \in L^2(0,1)$. There exists a unique energy solution to \eqref{rescaled-target}. This solution satisfies \eqref{eq:energy-estimate} and \eqref{eq:stability-estimate}.
\end{proposition}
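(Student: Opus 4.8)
The plan is to obtain existence through the Galerkin method, as announced in the text, and then to recover uniqueness and the quantitative bounds \eqref{eq:energy-estimate}--\eqref{eq:stability-estimate} from the energy identity satisfied by any energy solution. The natural setting is the Gelfand triple $H^1(0,1)\hookrightarrow L^2(0,1)\hookrightarrow (H^1(0,1))'$, in which \eqref{eq:energy-ff} with the time-dependent form $a$ of \eqref{eq:bilinear} falls exactly under the abstract parabolic result cited in the statement (\cite{brezis2010}, Theorem~10.9). For each fixed $t$ the form is bilinear and continuous on $H^1(0,1)\times H^1(0,1)$, and $t\mapsto a(t;u,v)$ is smooth since $\be(t)=\be_0+t\bv$ is affine and bounded below by $\be_0>0$ on $[0,T]$.

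The only hypothesis requiring genuine verification is a Gårding inequality for $a$. The leading term $\frac{\sigma}{\be(t)^2}\|\partial_x v\|_{L^2}^2$ is uniformly elliptic because $\frac{\sigma}{\be(t)^2}\ge\frac{\sigma}{\be(T)^2}>0$ on $[0,T]$, as already observed before the proposition; the first-order transport contribution $-\frac{\bv}{\be(t)}\int_0^1 v\,x\partial_x v\,dx$ and the boundary trace produced after integrating by parts are then absorbed into it by Young's inequality together with the one-dimensional trace/interpolation inequality $v(1)^2\le\epsilon\|\partial_x v\|_{L^2}^2+C_\epsilon\|v\|_{L^2}^2$, at the cost of a lower-order $\|v\|_{L^2}^2$ term. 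This yields $a(t;v,v)\ge\beta\|v\|_{H^1}^2-C\|v\|_{L^2}^2$ with $\beta>0$ uniform in $t\in[0,T]$, which is precisely what the cited theorem requires to produce a unique $w\in L^2(0,T;H^1)\cap \mathcal C^0([0,T];L^2)$ with $\partial_t w\in L^2(0,T;(H^1)')$ solving \eqref{eq:energy-ff} with $w(0)=w^0$.

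Concretely I would realize this with the $L^2(0,1)$-orthonormal basis of $H^1(0,1)$ given by the Neumann eigenfunctions $e_0\equiv 1$, $e_k=\sqrt2\cos(k\pi x)$, seeking the Galerkin approximation $w_m(t)=\sum_{k=0}^m d^k_m(t)e_k$ that solves $\langle\partial_t w_m,e_j\rangle+a(t;w_m,e_j)=0$ for $0\le j\le m$, with $w_m(0)$ the $L^2$-projection of $w^0$. This is a linear ODE system with continuous coefficients, hence uniquely solvable on $[0,T]$. Testing with $w_m$ itself and using the Gårding bound gives energy bounds uniform in $m$ and, via Gronwall, the exponential control; these provide boundedness of $(w_m)$ in $L^\infty(0,T;L^2)\cap L^2(0,T;H^1)$ and, from the equation, of $(\partial_t w_m)$ in $L^2(0,T;(H^1)')$. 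Weak-$*$ and weak compactness combined with the Aubin--Lions lemma yield a subsequence converging to a limit $w$ strongly in $L^2(0,T;L^2)$, and passing to the limit in the Galerkin identities—first for test functions in the span, then by density on all of $H^1(0,1)$—shows $w$ is an energy solution, the initial datum being recovered in the usual way by integrating against test functions that do not vanish at $t=0$.

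Finally, for the estimates I would argue directly on the limit. Since $w\in L^2(0,T;H^1)$ and $\partial_t w\in L^2(0,T;(H^1)')$, the Lions--Magenes lemma makes $t\mapsto\tfrac12\|w(t)\|_{L^2}^2$ absolutely continuous with $\frac{d}{dt}\tfrac12\|w(t)\|_{L^2}^2=\langle\partial_t w(t),w(t)\rangle=-a(t;w(t),w(t))$, which rigorously justifies the formal computation of Lemma~\ref{lem:smooth-estimation}: integrating in time gives \eqref{eq:energy-estimate}, while discarding the nonnegative gradient and boundary contributions yields $\frac{d}{dt}\tfrac12\|w\|_{L^2}^2\le-\lambda\|w\|_{L^2}^2$ and hence \eqref{eq:stability-estimate} by Gronwall. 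Uniqueness is then immediate, since the difference of two energy solutions with the same datum solves the homogeneous problem with zero initial condition and \eqref{eq:stability-estimate} forces it to vanish. I expect the Gårding step to be the main obstacle: the transport term and the Robin boundary condition make $a$ neither symmetric nor obviously coercive, so both the uniform a priori bound and the $T$-uniform constants needed for \eqref{eq:stability-estimate} hinge on correctly combining uniform parabolicity with the trace inequality to control the boundary term.
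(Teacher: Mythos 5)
Your proposal is correct and follows essentially the same route as the paper, which likewise obtains existence from the Galerkin method via the abstract parabolic result in Brezis (Theorem~10.9), recovers \eqref{eq:energy-estimate}--\eqref{eq:stability-estimate} from the energy identity satisfied by any energy solution, and concludes uniqueness by linearity; your write-up merely makes explicit the G\r{a}rding inequality, the compactness step, and the Lions--Magenes justification of $\frac{d}{dt}\tfrac12\|w(t)\|_{L^2}^2=-a(t;w,w)$ that the paper leaves implicit. The only point worth flagging is that your energy computation implicitly uses the sign of the transport term that is consistent with the strong form \eqref{rescaled-target} (so that the boundary contribution appears with a favorable sign and can be discarded), which is the intended reading of \eqref{eq:bilinear}.
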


Proposition~\ref{prop:existence} gives existence to a weak $L^2$ solution since the energy solution is a particular one, and this solution satisfies in particular \eqref{eq:stability-estimate}. But one cannot directly conclude that \emph{any} weak $L^2$ satisfies \eqref{eq:stability-estimate} and deduce uniqueness from the estimate. Indeed, such an estimate cannot be deduced directly from the weak formulation in \eqref{def:weak-target}. Instead, we will first prove uniqueness from the weak formulation, then deduce that the only weak $L^2$ solution satisfies indeed the estimate.

\begin{lemma}
    \label{lem:uniqueness}
    There is at most one weak $L^2$ solution in the sense of Definition~\ref{def:rescaled-weak-target}.
\end{lemma}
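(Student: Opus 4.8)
The plan is to argue by duality (transposition). Suppose $w_1$ and $w_2$ are two weak $L^2$ solutions in the sense of Definition~\ref{def:rescaled-weak-target} associated with the same datum $w^0$, and set $w := w_1 - w_2 \in \cC^0([0,+\infty), L^2(0,1))$. Subtracting the two weak formulations, and noting that $w_1(0,\cdot) = w_2(0,\cdot) = w^0$ so that the initial contributions cancel, I obtain that for every $T>0$,
\[
\int_0^T\int_0^1 w(t,x)\left[\partial_t\tilde v + \frac{\sigma}{\be(t)^2}\partial_{xx}^2\tilde v - \frac{\bv}{\be(t)}x\partial_x\tilde v - \left(\lambda + \frac{\bv}{\be(t)}\right)\tilde v\right]dx\,dt = 0
\]
for every admissible test function $\tilde v$. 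It then suffices to show that, as $\tilde v$ ranges over the admissible class, the bracketed quantity sweeps out a dense subset of $L^2((0,T)\times(0,1))$; this forces $w=0$ a.e.

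To this end I would fix $T>0$ and an arbitrary $f\in \cC^\infty_c((0,T)\times(0,1))$, and solve the backward-in-time dual problem
\[
\left\{
\begin{aligned}
\partial_t\tilde v + \frac{\sigma}{\be(t)^2}\partial_{xx}^2\tilde v - \frac{\bv}{\be(t)}x\partial_x\tilde v - \left(\lambda + \frac{\bv}{\be(t)}\right)\tilde v &= f, & (t,x)&\in(0,T)\times(0,1), \\
\partial_x\tilde v(t,0) = \partial_x\tilde v(t,1) &= 0, & t&\in(0,T), \\
\tilde v(T,\cdot) &= 0. &&
\end{aligned}
\right.
\]
The time reversal $s := T-t$ converts this into a forward Cauchy problem for a uniformly parabolic operator, since $\tfrac{\sigma}{\be(t)^2}\geq \tfrac{\sigma}{\be(T)^2}>0$ on $[0,T]$, the advection coefficient $\tfrac{\bv}{\be(t)}x$ and the zeroth-order coefficient are bounded and continuous in $t$, and $\be(t)=\be_0+\bv t$ is smooth. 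With homogeneous Neumann boundary conditions, vanishing initial datum and right-hand side $f\in L^2$, standard maximal parabolic ($W^{2,1}_2$) regularity yields a unique solution with $\tilde v\in L^2((0,T);H^2(0,1))$ and $\partial_t\tilde v\in L^2((0,T);L^2(0,1))$, whence by interpolation $\tilde v\in \cC^0([0,T];H^1(0,1))\subset \cC^0([0,T];L^2(0,1))$.

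Such a $\tilde v$ meets exactly the requirements imposed on test functions in Definition~\ref{def:rescaled-weak-target}: the regularity, the Neumann boundary conditions, and the terminal condition $\tilde v(T,\cdot)=0$ are all built in. Substituting it into the identity above gives $\int_0^T\int_0^1 wf\,dx\,dt = 0$, and since $\cC^\infty_c((0,T)\times(0,1))$ is dense in $L^2((0,T)\times(0,1))$, I conclude $w\equiv 0$ on $(0,T)\times(0,1)$ for every $T$, i.e. $w_1=w_2$.

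The only point requiring genuine care is the regularity step: I must verify that the dual solution $\tilde v$ has \emph{precisely} the regularity demanded of an admissible test function, so that the integrations by parts underlying the weak formulation produce no spurious boundary terms and the pairing makes sense for $w$ merely in $L^2$. This is exactly guaranteed by the $W^{2,1}_2$ maximal regularity together with the homogeneous Neumann conditions in Definition~\ref{def:rescaled-weak-target}; the time-dependence of the coefficients is harmless because the operator is uniformly parabolic on $[0,T]$ with coefficients bounded and smooth in $t$.
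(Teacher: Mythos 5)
Your argument is correct and is essentially the paper's own proof: both proceed by transposition, solving the backward dual problem with a prescribed source, using time reversal and uniform parabolicity to obtain a solution with exactly the test-function regularity of Definition~\ref{def:rescaled-weak-target}, and concluding $\int_0^T\int_0^1 w\,f\,dx\,dt=0$. The only cosmetic difference is that you take $f\in\cC^\infty_c$ and invoke density, whereas the paper takes the source directly in $L^2((0,T);L^2(0,1))$; this changes nothing of substance.
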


\begin{proof}
    Consider two such solutions $w_1,w_2 \in \cC^0([0,T];L^2(0,1))$. Then the difference satisfies, for any test function $\tilde{v}$ that satisfies the assumptions of Definition~\ref{def:rescaled-weak-target}:
    \[ \int_0^T \int_0^1 (w_1-w_2)\left(\partial_t \tilde{v} + \frac{\sigma}{\be(t)^2} \tilde{v} - \frac{\bv}{\be(t)} x \partial_x \tilde{v} - \left(\lambda + \frac{\bv}{\be(t)}\right) \tilde{v} \right) dx dt = 0 \]
    Now fix $S \in L^2((0,T);L^2(0,1))$ and consider the inhomogeneous dual problem with source term $S$:
    \begin{equation}
        \label{rescaled-dual}
        \left\{
        \begin{aligned}
            \partial_t \tilde{v} + \frac{\sigma}{\be(t)^2} \partial^2_{xx} \tilde{v} - \frac{\bv}{\be(t)} x \partial_x \tilde{v} - \left(\lambda + \frac{\bv}{\be(t)}\right) \tilde{v} & = S, & \text{for} ~ (t,x) \in [0,T] \times (0,1), \\
            \partial_x \tilde{v}(t,1)                                                                                                                                                  & = 0, & \text{for} ~ t \in [0,T],                  \\
            \partial_x \tilde{v}(t,0)                                                                                                                                                  & = 0, & \text{for} ~ t \in [0,T],                  \\
            \tilde{v}(T,x)                                                                                                                                                             & = 0, & \text{for} ~ x \in (0,1).
        \end{aligned}
        \right.
    \end{equation}
    Up to time reversal $t \to T-t$, this is a classical parabolic problem with smooth coefficients. Therefore, there exists a (unique) solution $\tilde{v} \in \left(L^2\left((0,T); H^2(0,1)\right)\right)\cap \cC^0([0,T],L^{2}(0,1))$ and such that $\partial_t \tilde{v}\in \left(L^2\left((0,T); L^2(0,1)\right)\right)$ (the regularity is limited by $S \in L^2$). Consequently, $\tilde{v}$ can be taken as a test function against $(w_1-w_2)$ and it holds:
    \[ \int_0^T \int_0^1 (w_1-w_2)S dx dt = 0. \]
    Since this is true for any $S \in L^2((0,T);L^2(0,1))$, $w_{1}=w_{2}$ and uniqueness is proved.

\end{proof}

Proposition~\ref{prop:stability-target} follows from Proposition~\ref{prop:existence} and \ref{lem:uniqueness}. 

\medskip

\section{Formal derivation of the backstepping kernel problems}\label{app:derivation}
As explained in the remarks in Section~\ref{sec:kernels}, the derivation is done assuming that the kernels explicitly depend on time $t$, then we show they do not have to depend on $t$. In the spirit of Section~\ref{sec:feedback}, we assume that all the functions are smooth and we differentiate \eqref{eq:map} at $x=0$. It gives:
\[ \partial_x g(t,0) = -k(t,0,0) \zeta(t,0), \]
which suggests, since $\zeta(t,0)$ is undetermined, that the kernel $k$ should be supplied with the condition
\begin{equation}
    \label{eq:corner}
    k(t,0,0)=0,
\end{equation}
for the boundary condition at $x=0$ in \eqref{target} to be satisfied.

At this stage, it is unclear how the kernel depends on time. We make it explicit by applying a rescaling in the space variable into a fixed domain. More precisely, we consider the rescaled versions of problems \eqref{linearized_scal} and \eqref{target}. The latter one was defined in \eqref{rescaled-target} while the former is given by:
\begin{equation}
    \label{rescaled-linearized}
    \left\{
    \begin{aligned}
        \partial_t z - \frac{1}{\be(t)^2} \sigma \partial^2_{xx} z - \frac{\bv}{\be(t)} x \partial_x z & = 0,                 & \text{for} ~ (t,x) \in \R_+^* \times (0,1), \\
        \frac{\sigma}{\be(t)}  \partial_x z(t,1) + \bv z(t,1)                                          & = \delta \psi(t),    & \text{for} ~ t \in \R_+^*,                  \\
        \frac{\sigma}{\be(t)} \partial_x z(t,0)                                                        & = 0,                 & \text{for} ~ t \in \R_+^*,                  \\
        z(0,x)                                                                                         & = \zeta^0(x \be(t)), & \text{for} ~ x \in (0,1).
    \end{aligned}
    \right.
\end{equation}

We consider the backstepping transformation associated to these rescaled problems: for any $t \geq 0$, $x \in (0,1)$:
\begin{equation}
    \label{rescaled-map}
    w(t,x) := z(t,x) - \int_0^x \tk(t,x,y) z(t,y) dy,
\end{equation}
where $z$ is a solution to \eqref{rescaled-linearized}, $w$ a solution to \eqref{rescaled-target} and $\tk$ an unknown function defined in $D_1 := \{0 < y \leq x < 1 \}$. Assume everything is smooth and compute derivatives:
\begin{align}
    \label{gx}
    \partial_x w(t,x)      & = \partial_x z(t,x) - \tk(t,x,x) z(t,x) - \int_0^x  \partial_x \tk(t,x,y)z(t,y)dy,                                                                \\
    \partial^2_{xx} w(t,x) & =  \partial_{xx}^2 z(t,x) - \partial_x\left(\tk(t,x,x)z(t,x)\right) - \partial_x \tk(t,x,x)z(t,x) - \int_0^x  \partial^2_{xx} \tk(t,x,y)z(t,y)dy.
\end{align}
and:
\begin{align*}
    \partial_t w(t,x) & = \partial_t z - \int_0^x \tk(t,x,y)\partial_t z(t,y)dy - \int_0^x \partial_t \tk(t,x,y)z(t,y)dy                                                                                                                                           \\
                      & = \frac{\sigma}{\be(t)^2} \partial_{xx}^2 z + \frac{\bv}{\be(t)} x\partial_x z - \int_0^x \tk(t,x,y)\left(\frac{\sigma}{\be(t)^2} \partial_{yy}^2 z + \frac{\bv}{\be(t)} y \partial_y z \right)dy - \int_0^x \partial_t \tk(t,x,y)z(t,y)dy
\end{align*}
Now use integration by parts for the integral terms in the middle. It holds:
\begin{equation*}
    \int_0^x \tk(t,x,y)y \partial_x z(t,y) dy = \tk(t,x,x)xz(t,x) - \int_0^x \partial_y (y\tk(t,x,y)) z(t,y)dy,
\end{equation*}
and, using $\partial_x z(0)=0$:
\begin{align*}
    -\int_0^x \tk(t,x,y)\partial_{yy}^2 z & = -\tk(t,x,x)\partial_x z(t,x) + \int_0^x \partial_y \tk(t,x,y) \partial_x z dy                                                                       \\
                                          & = -\tk(t,x,x)\partial_x z(t,x) + \left(\partial_y \tk(t,x,x)z(t,x)-\partial_y \tk(t,x,0)z(t,0) \right) - \int_0^x \partial_{yy}^2 \tk(t,x,y)z(t,y)dy,
\end{align*}
so that it holds:
\begin{equation}
    \label{gt}
    \begin{split}
        \partial_t w(t,x) =& \frac{\sigma}{\be(t)^2} \left(\partial_{xx}^2 z - \tk(t,x,x)\partial_x z(t,x) + \left(\partial_y \tk(t,x,x)z(t,x)-\partial_y \tk(t,x,0)z(t,0) \right)\right.\\
        &-\left. \int_0^x \partial_{yy}^2 \tk(t,x,y)z(t,y)dy \right) \\ &+\frac{\bv}{\be(t)}\left(x\partial_x z + \int_0^x \partial_y (y \tk(t,x,y)) z(t,y)dy -\tk(t,x,x)xz(x)\right) - \int_0^x \partial_t \tk(t,x,y)z(t,y)dy
    \end{split}
\end{equation}
Now we insert \eqref{gx},\eqref{gt} into the equation satisfied by $w$:
\[\partial_t w - \frac{\sigma}{\be(t)^2} \partial_{xx}^2 w - \frac{\bv}{\be(t)} x  \partial_x w + \lambda w = 0. \]
After cancellations, it remains, for any $x \in (0,1)$:
\begin{equation*}
    \begin{split}
        0 = \frac{\sigma}{\be(t)^2} \left(2\frac{d}{dx} \tk(t,x,x) + \lambda \right) z(x) - \partial_y \tk(t,x,0)z(0) \\
        - \int_0^x z\left(\partial_t \tk-\frac{\sigma}{\be(t)^2}(\partial^2_{xx} \tk-\partial_{yy}^2 \tk) - \frac{\bv}{\be(t)} (x\partial_x \tk+\partial_y (y\tk)) + \lambda \tk \right)dy,
    \end{split}
\end{equation*}
which leads to the following problem for $t>0$
\begin{equation}
    \label{rescaled-kernel_eq}
    \left\{
    \begin{aligned}
        \partial_t \tk - \frac{\sigma}{\be(t)^2} \left( \partial_{yy}^2 \tk(t,x,y) - \partial_{xx}^2 \tk(t,x,y) \right) - \frac{\bv}{\be(t)} \left(x \partial_x \tk + y \partial_y \tk + \tk \right) + \lambda \tk & = 0         & (x,y) \in \{0 < y \leq x < 1\}, \\
        \frac{\sigma}{\be(t)^2} \partial_y \tk(t,x,0)                                                                                                                                                              & = 0         & x \in (0,1),                    \\
        \frac{2 \sigma}{\be(t)^2} \frac{d}{dx} \tk(t,x,x)                                                                                                                                                          & = - \lambda & x \in (0,1).
    \end{aligned}
    \right.
\end{equation}
Now we look for a solution with separate variables under the form ($k$ does not depend explicitly on time):
\begin{equation}
    \label{separate-variables}
    \tk(t,x,y) = \be(t) k(x\be(t),y\be(t)).
\end{equation}
Inserting \eqref{separate-variables} into \eqref{rescaled-kernel_eq}, the terms in $\be(t)$ cancel each other and coming back to the original domain we obtain 
\begin{equation}
    \label{eq:kernel_eq-Dt}
    \left\{
    \begin{aligned}
        \partial_{xx}^2 k(x,y) - \partial_{yy}^2 k(x,y) & = \frac{\lambda}{\sigma} k(x,y) & (x,y) \in \{0 < y \leq x < \be(t)\}, \\
        \partial_y k(x,0)                                                & = 0                                             & x \in (0,\be(t)),  \\
        k(x,x)                                                           & = -\frac{\lambda}{2 \sigma} x                   & x \in (0,\be(t)),
    \end{aligned}
    \right.
\end{equation} 
Moreover, inserting \eqref{separate-variables} into \eqref{rescaled-map}, it is clear with a change of variables that the $k$ defined from \eqref{separate-variables} enables to recover the original kernel in \eqref{eq:map}-\eqref{eq:defT} we were looking for.

\section{Failure of the basic quadratic Lyapunov approach}
\label{sec:lyapunov}
We show why the common approach of directly using a basic quadratic Lyapunov function would fail to provide exponential stabilization. In order to have a proper basic quadratic Lyapunov function, we work on the rescaled system \eqref{rescaled-linearized}. A basic quadratic Lyapunov function for the $L^{2}$ norm has the form, for some positive function $f \in C^2((0,1)) \cap C^1([0,1])$,
\begin{equation}
    \label{eq:quadratic-Lyapunov}
    V(z(t,\cdot)) = \int_{0}^{1} f(x)z(t,x)^2dx.
\end{equation}

Let us take $V$ as a Lyapunov function candidate. By differentiating along $C^{2}$ solutions of \eqref{rescaled-linearized}, we have
\begin{equation}
    \frac{d}{dt}V(z(t,\cdot)) = \int_{0}^{1} 2f(x)z\left[ \frac{ \sigma}{\be(t)^2} \partial_{xx}^2 z + \frac{\bv}{\be(t)} x \partial_x z \right] dx,
\end{equation}
Integrating by parts the first term gives
\[ \left[2f\frac{ \sigma}{\be(t)^2}z\partial_{x}z\right]_{0}^{1} - \frac{ \sigma}{\be(t)^2} \int_{0}^{1} \left[2f(x)(\partial_{x}z)^{2}+2f'(x)z\partial_{x}z \right] dx, \]
while the second terms gives
\[ \left[fx\frac{\bv}{\be(t)}z^{2}\right]_{0}^{1} - \frac{2\bv}{\be(t)} \int_0^1 \left(z^2 \left(f + xf' \right) + f x z \partial_x z \right)dx.   \]
Note that the last term in the previous equation is the same term we integrated by parts. Therefore putting everything together we obtain:
\begin{equation}
    \begin{split}
        \frac{d}{dt}V(z(t,\cdot)) = \left(\left[2f\frac{ \sigma}{\be(t)^2}z\partial_{x}z\right]_{0}^{1}+\left[fx\frac{\bv}{\be(t)}z^{2}\right]_{0}^{1}\right)- \frac{ \sigma}{\be(t)^2} \int_{0}^{1}& \left[2f(x)(\partial_{x}z)^{2}+2f'(x)z\partial_{x}z \right] dx\\
        &- \frac{\bv}{\be(t)} \int_0^1 z^2 \left(f + xf' \right)dx,
    \end{split}
\end{equation}
which gives, using again an integration by parts for the second term in the first integral and the boundary conditions of \eqref{rescaled-linearized}
\begin{equation}
    \begin{split}
        \frac{d}{dt}V(z(t,\cdot)) =& \left(\left[2f\frac{ \sigma}{\be(t)^2}z\partial_{x}z\right]_{0}^{1}+\left[fx\frac{\bv}{\be(t)}z^{2}\right]_{0}^{1}-\left[z^{2}f'\frac{ \sigma}{\be(t)^2}\right]_{0}^{1}\right)\\
        &- \frac{\sigma}{\be(t)^{2}} \int_{0}^{1}\left[ 2f(x)(\partial_{x}z)^{2}-f''(x)z^{2} \right] dx
        -\frac{\bv}{\be(t)} \int_0^1   z^2 \left(f + xf' \right)dx.\\
        =& \left(\frac{2}{\be(t)}f(1)\delta \psi(t)z(t,1)-\frac{\bv}{\be(t)}z^{2}(t,1))+\frac{ \sigma}{\be(t)^2} \left(z(t,0)^2f'(0) -z(t,1)^2f'(1) \right) \right)\\
        &- \frac{\sigma}{\be(t)^{2}} \int_{0}^{1}\left[ 2f(x)(\partial_{x}z)^{2}-f''(x)z^{2} \right] dx
        -\frac{\bv}{\be(t)} \int_0^1 z^2 \left(f + xf' \right)dx.
    \end{split}
\end{equation}
To have a Lyapunov function ensuring an exponential stability estimate, there has to exist $\gamma>0$ such that the right-hand side is lower or equal than $-\gamma V$ for any $t\in[0,T]$ and any solution of \eqref{rescaled-linearized}.
From that point one would typically require in the Lyapunov approach that for all $t\in[0,+\infty)$ and $\;Z\in C^{2}([0,1])$,
\begin{equation}
    \label{eq:necessaire-cond}
    \begin{split}
        & \left(\frac{2}{\be(t)}f(1) \delta \psi(t)Z(1)-\frac{\bv}{\be(t)}Z^{2}(1)) + \frac{ \sigma}{\be(t)^2} \left(Z(0)^2f'(0)-Z(1)^2f'(1) \right) \right)\\
        &- \int_{0}^{1}\left[2f(x)\frac{\sigma}{\be(t)^{2}}(\partial_{x}Z)^{2}
            +\left(\frac{\bv}{\be(t)} (f+ xf')-f''(x)\frac{ \sigma}{\be(t)^2}-\gamma f(x)\right) Z^{2}\right]dx\leq 0,
    \end{split}
\end{equation}
In particular this would be true for any $Z\in C^{2}([0,1])$ with compact support which implies that
\begin{equation}
    \begin{split}
        \int_{0}^{1}\left[2f(x)\frac{\sigma}{\be(t)^{2}}(\partial_{x}Z)^{2}
            +\left(\frac{\bv}{\be(t)} (f+ xf')-f''(x)\frac{ \sigma}{\be(t)^2}-\gamma f(x)\right) Z^{2}\right]dx \geq 0.
    \end{split}
\end{equation}
Since this has to be true for any time and any $Z\in C_{c}^{2}([0,1])$, and since $\be(t)\rightarrow +\infty$ when $t\rightarrow+\infty$, this implies that for any $x\in (0,1)$,
\begin{equation}
    \begin{split}
        xf'(x)-\gamma f(x)\geq0,
    \end{split}
\end{equation}
but as $f\in C^{1}([0,1];(0,+\infty))$ this is impossible: indeed, denoting $M =\sup_{[0,1]}(f')\in\mathbb{R}$ and $m=\inf_{[0,1]}(f)>0$ this would imply in particular that
\begin{equation}
    xM\geq \gamma m>0,\;\forall x\in(0,1),
\end{equation}
which would lead to a contradiction. Note that, although Lyapunov functionals of the form \eqref{eq:quadratic-Lyapunov} fail here, some other Lyapunov functionals (\cite{coron2004global,xiang2020quantitative} or quadratic functionals with time-dependent weights) may manage to provide rapid stabilization results for this system.

\bibliographystyle{plain}
\bibliography{full}

\end{document}